\documentclass[11pt]{article}
\usepackage{amsmath,amssymb,amsthm,graphicx,fixmath}

\usepackage{algorithmic}
\usepackage{algorithm}
\usepackage{setspace}
\usepackage{ifthen}
\usepackage{subfigure}
\usepackage{tikz}

\setlength{\textwidth}{6.5in}
\setlength{\evensidemargin}{0in}
\setlength{\oddsidemargin}{0in}
\setlength{\textheight}{9.0in}
\setlength{\topmargin}{0in}
\setlength{\headheight}{0in}
\setlength{\headsep}{0in}
\setlength{\parskip}{0.4mm}
\newcommand*{\Positives}{\mathbb{N}}

\providecommand{\remove}[1]{}

\theoremstyle{plain}
\newtheorem{theorem}{Theorem}[section]
\newtheorem{lemma}[theorem]{Lemma}

\newtheorem{proposition}[theorem]{Proposition}
\newtheorem{claim}[theorem]{Claim}
\newtheorem{corollary}[theorem]{Corollary}

\newtheorem{observation}[theorem]{Observation}
\theoremstyle{definition}
\newtheorem{definition}[theorem]{Definition}
\theoremstyle{remark}
\newtheorem{remark}[theorem]{Remark}

\newcommand{\E}{\mathcal{E}}

\def\famB{\mathcal{B}}
\def\famA{\mathcal{A}}
\def\famC{\mathcal{C}}
\def\famD{\mathcal{D}}
\def\famF{\mathcal{F}}
\def\famG{\mathcal{G}}
\def\famI{\mathcal{I}}
\def\famJ{\mathcal{J}}
\def\famH{\mathcal{H}}
\def\famE{\mathcal{E}}
\def\famY{\mathcal{Y}}
\def\famM{\mathcal{M}}
\def\famK{\mathcal{K}}
\def\famL{\mathcal{L}}

\def\famS{\mathcal{S}}
\def\famV{\mathcal{V}}
\def\famW{\mathcal{W}}

\def\famX{\mathcal{X}}

\newcommand{\li}{\ell}

\newcommand{\cf}{{{\chi_{\text{\textup{cf}}}}}}

\newcommand{\pn}{{{\chi_{\text{\textup{cf}}}}}}

\newcommand{\kpn}{{{\chi_{\text{\textup{k-cf}}}}}}
\newcommand{\norm}[1]{\left\lVert#1\right\rVert}

%
%
%

\newcommand{\cardin}[1]{\lvert {#1} \rvert}
\newcommand{\card}[1]{\lvert {#1} \rvert}

\newcommand{\mn}[0]{\medskip\noindent}

%

\begin{document}

\title{Conflict-Free Coloring of String Graphs}

\author{%
Chaya Keller\thanks{Department of Mathematics, Technion --- Israel Institute of Technology, Haifa 32000, Israel. 
Research partially supported by Grant 635/16 from the Israel Science Foundation, by the Shulamit Aloni Post-Doctoral Fellowship of the Israeli Ministry of Science and Technology, and by the Kreitman Foundation Post-Doctoral Fellowship.
\texttt{chayak@technion.ac.il}
}
\and Alexandre Rok\thanks{Department of Mathematics, Ben-Gurion University of the Negev,
Be'er Sheva 84105, Israel. Research partially supported by Grant 635/16 from the Israel Science Foundation.
\texttt{roky3090@gmail.com}
}
\and Shakhar Smorodinsky\thanks{Department of Mathematics, Ben-Gurion University of the Negev,
Be'er Sheva 84105, Israel. Research partially supported by Grant 635/16 from the Israel Science Foundation.
\texttt{shakhar@math.bgu.ac.il}
}%
}

\date{}
\maketitle

\begin{abstract}
Conflict-free coloring (in short, CF-coloring) of a graph $G = (V,E)$
is a coloring of $V$ such that the  neighborhood of each vertex contains a vertex whose color differs
from the color of any other vertex in that neighborhood. Bounds on CF-chromatic numbers have been studied both for general graphs and for intersection graphs of geometric shapes. In this paper we obtain such bounds for several classes of string graphs, i.e., intersection graphs of curves in the plane:


(i) We provide a general upper bound of $O(\chi(G)^2 \log n)$ on the CF-chromatic number of any string graph $G$ with $n$ vertices in terms of the classical chromatic number $\chi(G)$. This result stands in contrast to general graphs where the CF-chromatic number can be $\Omega(\sqrt{n})$ already for bipartite graphs.

(ii) For some central classes of string graphs, the
CF-chromatic number is as large as $\Theta(\sqrt{n})$, which was shown to be the upper bound for any graph even in the non-geometric context. For several such classes (e.g., intersection graphs of frames) we prove a tight bound of $\Theta(\log n)$ with respect to the relaxed notion of $k$-CF-coloring (in which the punctured neighborhood of each vertex contains a color that appears at most $k$ times), for a small constant $k$.

(iii) We obtain a general upper bound on the $k$-CF-chromatic number of arbitrary hypergraphs: Any hypergraph with $m$ hyperedges can be $k$-CF colored with $\tilde{O}(m^{\frac{1}{k+1}})$ colors. This bound, which extends a bound of Pach and Tardos (2009), is tight for some string graphs, up to a logarithmic factor.

(iv) Our fourth result concerns circle graphs in which coloring problems are motivated by VLSI designs. We prove a tight bound of $\Theta(\log n)$ on the CF-chromatic number of circle graphs, and an upper bound of $O(\log^{3} n)$ for a wider class of string graphs that contains circle graphs, namely, intersection graphs of grounded L-shapes.

\end{abstract}


\section{Introduction}

\subsection{Background}

\paragraph{Conflict-free coloring and $k$-conflict-free coloring of hypergraphs.}
\begin{definition}
Let $H=(V,\E)$ be a hypergraph and let $c$ be a coloring $c \colon V \rightarrow \Positives$.
We say that $c$ is a \emph{conflict-free coloring} (CF-coloring in short) if every hyperedge $S \in \E$ contains a uniquely colored vertex, i.e., if there exists a color $i \in \Positives$ such that $\card{S \cap c^{-1}(i)}=1$.

For $k \in \mathbb{N}$, we say that $c$ is a $k$-CF-coloring if every hyperedge $S \in \E$ contains a color that appears at most $k$ times, i.e., if there exists a color $i \in \Positives$ such that $1 \leq \card{S \cap c^{-1}(i)} \leq k$.

The minimum integer $\ell$ such that $H$ admits a CF-coloring (respectively, $k$-CF-coloring) with $\ell$ colors is called the CF-chromatic (respectively, $k$-CF-chromatic) number of $H$ and is denoted by $\chi_{\mathrm{cf}}(H)$ (respectively, $\chi_{\text{\textup{k-cf}}}(H)$).
\end{definition}

CF-coloring of hypergraphs was introduced at FOCS'2002 by Even et al.~\cite{ELRS} and in the Ph.D. thesis of Smorodinsky \cite{SmPHD}.
The original motivation to study CF-coloring derived from spectrum allocation in wireless networks.
Since then, the notion of CF-coloring has attracted significant attention from researchers, and its different aspects and variants have been studied in dozens of papers. See, e.g.,~\cite{AS08,Chan12,cf7,CKS2009talg,GR15,hks09,CFPT09} for a sample of various aspects of CF-coloring, and the survey~\cite{CF-survey} and the references therein for more on CF-colorings and their applied and theoretical aspects.


In 2003, Har-Peled and Smorodinsky~\cite{HS02,SmPHD} introduced the notion of $k$-CF coloring.
As in the first papers on CF-coloring, the
motivationto study $k$-CF coloring stems from the allocation of frequencies in cellular networks, since in reality, the interference between conflicting antennas
is a function of the number of such antennas.

\paragraph{Conflict-free coloring of graphs}

For a simple undirected graph $G=(V,E)$ and a vertex $v \in V$, the \emph{punctured neighborhood} of $v$ in $G$
is $N_G(v)=\{u \in V: (v,u) \in E\}$. 
A coloring $C$ of $V$ is called 
a \emph{punctured CF-coloring}, or simply \emph{CF-coloring},\footnote{We note that the
related concept of \emph{closed} CF-coloring, in which the closed neighborhood of each vertex is required to contain a uniquely
colored vertex, has also been studied extensively (see, e.g.,~\cite{AADFGHKS17,GST14,CFPT09}). In this paper we consider only punctured
CF-coloring, and hence throughout the paper we mostly shorten `punctured CF-coloring' into `CF-coloring'.}
if for any $v \in G$,
the set $N_G(v)$ contains a uniquely colored vertex. Similarly, $C$ is called a punctured
$k$-CF-coloring, or simply a $k$-CF-coloring, if for any $v \in G$, the set $N_G(v)$ contains a color that appears at most $k$ times.

The minimum number of colors in a punctured CF-coloring of $G$ is denoted by
$\pn(G)$. The notion $\kpn(G)$ is defined similarly.
Note that a punctured CF-coloring of a graph $G=(V,E)$ is
a CF-coloring of the hypergraph on the same vertex set whose hyperedges are all sets of the form $N_G(v)$, for all $v \in V$, which we call \emph{the neighborhood hypergraph of $G$}.

CF-coloring of graphs was introduced in 2009 by Cheilaris~\cite{CheilarisCUNYthesis2009} and Pach and Tardos~\cite{CFPT09}
and has been studied extensively since then. In particular, for general graphs on $n$ vertices, Cheilaris~\cite{CheilarisCUNYthesis2009} and
Pach and Tardos~\cite{CFPT09} obtained the upper bound $\pn(G) = O(\sqrt{n})$ for punctured CF-coloring. This bound is attained as a lower bound for various classes of graphs, as we discuss later.
Gargano and Rescigno~\cite{GR15} showed that punctured CF-coloring is hard to approximate within a factor of $n^{\frac {1}{2}-\epsilon}$.


\paragraph{Conflict-free coloring of geometrically defined graphs.}
Recently, several papers have considered CF-coloring of graphs with a geometric nature.  Abel et al. \cite{AADFGHKS17} studied a related notion of closed CF-coloring of planar graphs. Fekete and Keldenich \cite{FK17} studied intersection graphs of unit discs and of unit squares, that is, graphs whose vertices are the geometric objects, and two objects are adjacent if their intersection is non-empty.
Keller and Smorodinsky~\cite{KS18} studied intersection graphs $G$ of pseudo-discs (i.e., simple Jordan regions such that the boundaries of any
two such regions intersect in at most two points) and proved the asymptotically tight bound $\pn(G) = O(\log n)$.
Very recently, this result was generalized by Keszegh~\cite{Kes17+}, who showed that the intersection hypergraph of any family of
$n$ regions with a linear union complexity with respect to a family of pseudo-discs admits a CF-coloring with $O(\log n)$ colors.


\paragraph{String graphs.}
A string graph is an intersection graph of \emph{curves} (strings) in the plane. String graphs were introduced by Benzer~\cite{Ben59} 60 years ago in the context of genetic structures. In addition to their intrinsic theoretical interest, they have various applications (e.g., to electrical networks and to printed circuits).

String graphs were studied in numerous papers (see, e.g.,~\cite{EET76,FP10,FP12,KM91,Matousek14,PT02,SSS03}).
In particular, the chromatic number of string graphs was studied extensively. For example, it was shown that there are triangle-free string graphs with an arbitrarily large chromatic number \cite{PKKLMTW14}, while the more restricted class of so-called outer-string graphs are $\chi$-bounded (that is, their chromatic number is bounded by a function of their clique number, see, e.g.,~\cite{RW14}). In~\cite{CSS16} Chudnovsky et al. considered string graphs that are triangle-free but with an arbitrarily large chromatic number, and studied their induced subgraphs. Bounds on the chromatic numbers of some specific classes of string graphs, e.g., intersection graphs of frames (that are boundaries of axis-parallel rectangles), L-shapes, and grounded L-shapes (see definition below), were studied in~\cite{KW16,MCG96}.

\paragraph{Circle graphs.}
A circle graph is the intersection graph of $n$ chords on a circle; that is, its vertices are the chords and two vertices are adjacent if the corresponding chords intersect. Efficient algorithms for coloring circle graphs are important in VLSI designs. Specifically, in two-terminal switch box routing of VLSI physical designs, the wires connect pairs of terminals that lie on the boundary of a rectangle, which is the routing area. If two wires intersect, it is necessary to connect them on different layers so as to keep them electrically disconnected. Hence, the chromatic number of the circle graph whose vertices represent the wires is the minimal number of layers in which the wires can be arranged such that each pair of wires is electrically disconnected. See~\cite{Sherwani99} for more on this problem and its algorithmic aspects.


It is easy to see that an equivalent representation of the circle graph is the overlap graph of intervals on a line, in which the vertices are intervals on a fixed line and two vertices are adjacent if and only if the corresponding intervals intersect but neither of them contains the other.

The interval overlap graph is known to be more difficult to handle with respect to coloring problems than the classical intersection graph of intervals on a line. For example, while the latter is perfect (i.e., its chromatic number is equal to its clique number), the best bounds known for the maximal chromatic number $f(k)$ of an interval overlap graph with clique number $k$ are $\Omega(k \log k) \leq f(k) \leq O(2^k)$ (see~\cite{Ko97}); the exponential gap between the upper and lower bounds has remained open for more than $30$ years.

\subsection{Our results}

In this paper, we study CF-coloring of graphs, focusing on several classes of string graphs.

\subsubsection{CF-coloring of circle graphs and grounded L-shapes}

Our first result determines the CF-chromatic number of the class of circle graphs, up to a constant factor.
\begin{theorem}
\label{CFint}
Let $\famI(n)$ denote the maximum CF-chromatic number of a circle graph on at most $n$ vertices. Then
$\famI(n)= \Theta(\log n)$.
\end{theorem}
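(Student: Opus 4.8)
The plan is to prove the upper and lower bounds separately, working throughout with the standard representation of a circle graph as the overlap graph of intervals on a line (vertices are intervals, adjacent iff they cross, i.e.\ overlap without nesting). I would further plot each interval $u=[a_u,b_u]$ as a point $P_u=(a_u,b_u)$ in the plane. Since two intervals cross iff exactly one endpoint of one lies strictly inside the other, the punctured neighborhood of $v$ decomposes as $N(v)=N^L(v)\sqcup N^R(v)$ with $N^L(v)=\{u:a_u\in(a_v,b_v),\,b_u>b_v\}$ and $N^R(v)=\{u:b_u\in(a_v,b_v),\,a_u<a_v\}$; in the point picture these are exactly the points inside the two axis-parallel three-sided boxes $T_v=(a_v,b_v)\times(b_v,\infty)$ and $L_v=(-\infty,a_v)\times(a_v,b_v)$ attached to the corner $P_v$. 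Hence $\pn$ of a circle graph equals the CF-chromatic number of $\{P_u\}$ with respect to the family of coupled unions $T_v\cup L_v$, and both bounds can be attacked in this model.

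For the upper bound $\famI(n)=O(\log n)$ I would CF-color the points with respect to these coupled boxes. Each of $T_v$ and $L_v$ is a three-sided (bottomless-type) rectangle, and a point set admits an $O(\log n)$ CF-coloring with respect to such rectangles by a standard recursive sweep on the unbounded side. Producing one coloring for the topless boxes and one for the leftless boxes and taking their product immediately gives $\pn=O(\log^2 n)$; equivalently, a divide-and-conquer that recursively cuts the line at a median point and extracts the crossing intervals yields $O(\log n)$ levels, each carrying an essentially one-dimensional, hence $O(\log n)$-color, CF-coloring of its crossing family. The heart of the theorem is to shave this extra logarithmic factor, and the plan is to replace the per-level palettes by a single shared palette of $O(\log n)$ colors --- for instance colors read off one balanced search tree on the $2n$ endpoints --- and to establish a unique-extremal-color property for each neighborhood. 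I expect the main obstacle to be precisely that the crossing relation is \emph{two-sided}: the same interval acts as an inside-left endpoint for some queries and an inside-right endpoint for others, so no fixed endpoint-based color can be read off unambiguously, and decoupling the contributions of $T_v$ and $L_v$ while keeping the palette logarithmic is the crux. I would try to resolve it either by interleaving two suitably shifted tree-colorings, or by verifying that the coupled-box neighborhood hypergraph has linear shallow-cell complexity and invoking the general CF-coloring theorem for such hypergraphs.

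For the matching lower bound $\famI(n)=\Omega(\log n)$ I would embed the classical logarithmic lower bound for CF-coloring points with respect to intervals. Take a ground set of $m$ pairwise non-crossing intervals sharing a common right endpoint, $[a_1,B],\dots,[a_m,B]$ with $a_1<\dots<a_m<B$; in the point model they lie on the line $b=B$, so for any $v=[a_v,b_v]$ with $b_v<B$ one has $N(v)\cap\{\text{ground}\}=\{i:a_i\in(a_v,b_v)\}$, an arbitrary contiguous block. Now add, for each node of a complete binary tree on the $m$ ground positions, one probe interval bracketing the corresponding dyadic block. Being laminar (pairwise nested or disjoint), the probes are pairwise non-crossing and thus pairwise non-adjacent, and by the common-right-endpoint construction each probe is adjacent to exactly its intended ground block and to nothing else. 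Therefore any CF-coloring of this circle graph on $O(m)$ vertices restricts to a CF-coloring of the ground points with respect to all dyadic intervals; the usual nested-interval argument --- repeatedly pass to the half of the current dyadic interval that avoids its uniquely colored point, killing a new color at each step --- forces $\Omega(\log m)=\Omega(\log n)$ colors. The only routine points are realizing the probes as genuine intervals with the prescribed nesting and checking that the probe--ground adjacencies are exactly as intended, both straightforward once the endpoints are placed.
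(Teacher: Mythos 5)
Your lower bound is correct and complete: the common-right-endpoint ground family together with a laminar family of dyadic probes is a valid interval-overlap (hence circle-graph) realization, the probe--ground adjacencies are exactly the contiguous blocks you claim (a probe $[\alpha,\beta]$ with $\beta<B$ crosses $[a_i,B]$ iff $a_i\in(\alpha,\beta)$, and is otherwise nested or disjoint), and the halving argument over dyadic hyperedges forces $\Omega(\log m)=\Omega(\log n)$ colors. This is in fact a simpler route than the paper's Lemma~\ref{LBint}, whose more elaborate recursive construction is designed to handle $k$-CF-coloring for every fixed $k$, not only $k=1$.

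The upper bound, however, has a genuine gap, and it sits exactly where the paper invests essentially all of its effort. Your point/coupled-box reformulation and the product of two three-sided-rectangle colorings yield only $O(\log^2 n)$; you then explicitly identify shaving the extra logarithm as ``the crux'' and offer two unexecuted suggestions, neither of which is known to work. ``Interleaving two suitably shifted tree-colorings'' is not an argument. Invoking the general reduction (Lemma~\ref{lem:weakToKCF} with $k=1$) would require showing that \emph{every induced subhypergraph} of the full neighborhood hypergraph of a circle graph admits a proper coloring with $O(1)$ colors, and that is precisely what is not available directly: the paper circumvents it by first partitioning the intervals into BFS distance layers $\famS_0,\dots,\famS_s$ of the overlap graph, proving constant proper colorability only for the bipartite layer-to-layer hypergraphs (Proposition~\ref{Prop:Feeding-S_i}, via a transformation to grounded L-shapes and a decomposition into two planar auxiliary graphs plus one of maximum degree $4$, where the degree bound crucially exploits the layer structure through Claim~\ref{tzi}), and then recombining the layers with three alternating palettes plus one extra color. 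Without the layering, the bounded-degree argument for the ``exactly one left and one right neighbor'' graph breaks down, so your fallback does not reduce to any theorem you can cite. As written, your argument establishes $\famI(n)=O(\log^2 n)$ and $\famI(n)=\Omega(\log n)$, not the asserted $\Theta(\log n)$.
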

Our proof of Theorem~\ref{CFint} yields a polynomial time algorithm for constructing a CF-coloring with $O(\log n)$ colors. This lies in contrast with general graphs, where determining $\pn(G)$ was shown in~\cite{GR15} to be hard to approximate within a factor of $n^{\frac {1}{2}-\epsilon}$.

As the overlap graph of intervals on a line is equivalent to a circle graph, Theorem~\ref{CFint} demonstrates that the aforementioned difference in the coloring problem between the interval overlap graph and the interval intersection graph also exists for CF-coloring: While for any interval intersection graph $G$ one can easily prove that $\pn(G) \leq 4$ (see~\cite{KS18}), for the overlap graph of intervals, the CF-chromatic number (in the worst case) can be much larger, as mentioned above. Similarly, the problem of computing the CF-chromatic number is much more difficult (and even hard to approximate). The proof method is based on a ``distance layer division'' technique, which, to the best of our knowledge, was not used before in the context of CF-coloring.


\medskip

Our second result concerns the CF-chromatic number of grounded L-shapes. An \emph{L-shape} is a curve that consists of a vertical segment and a horizontal segment in the shape of the letter L. A family of \emph{grounded} L-shapes is a family of L-shapes in which the vertical segment of each L intersects a fixed horizontal line. It is easy to see that the class of intersection graphs of grounded L-shapes includes the class of circle graphs. Intersection graphs of L-shapes, and of grounded L-shapes, were studied, e.g., in~\cite{BT16,CU13,FKMU16}. For example, McGuinness~\cite{MG96} showed in 1996 that the intersection graph of grounded L-shapes is $\chi$-bounded.

While for the intersection graphs of general L-shapes, the CF-chromatic number can be as large as $\Theta(\sqrt n)$, in the case of grounded L-shapes, we obtain the following polylogarithmic bound:
\begin{theorem}
\label{thm:L-shapes}
Let $L(n)$ denote the maximum CF-chromatic number of a family of at most $n$ grounded L-shapes. Then $\Omega(\log n) \leq L(n) \leq O(\log^{3} n).$
\end{theorem}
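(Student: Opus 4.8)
The plan is to prove the lower bound by inclusion and the upper bound by a divide-and-conquer over the ground line, reducing the interesting case to the circle-graph coloring of Theorem~\ref{CFint}.

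For the lower bound there is nothing new to do: as observed above, circle graphs form a subclass of the intersection graphs of grounded L-shapes, so the family witnessing $\famI(n)=\Omega(\log n)$ in Theorem~\ref{CFint} is itself a family of grounded L-shapes, giving $L(n)=\Omega(\log n)$ at once. The whole content is the upper bound $L(n)=O(\log^{3}n)$. A first reduction splits the family into the $O(1)$ classes determined by the orientation of the L (which corner the bend occupies, and the side to which the horizontal segment points); coloring each class on a disjoint palette costs only a constant factor, so I may assume a single orientation, say a vertical segment rising from the ground line $y=0$ with a horizontal segment emanating to the right from its top. With this normalization the adjacency rule is transparent: ordering the shapes by the height of their horizontal segment, a taller shape $t$ meets a shorter shape $s$ exactly when the grounding abscissa $x_t$ lies in the horizontal interval $I_s=[x_s,x_s+w_s]$ spanned by $s$.

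First I would set up the recursion. Let $m$ be the median grounding abscissa and $\ell$ the vertical line $x=m$; call a shape \emph{crossing} if its horizontal segment meets $\ell$, and otherwise note that it lies entirely to one side. The non-crossing shapes split into a left and a right subfamily that are mutually non-adjacent (a left shape's horizontal interval stays left of $\ell$, so by the adjacency rule it can reach no right shape), so these two subfamilies can be recursed on independently and may even share a palette. Every vertex $v$ whose punctured neighborhood $N_G(v)$ avoids all crossing shapes has all of $N_G(v)$ on its own side and is therefore handled entirely inside one recursive call. It remains to satisfy the vertices with a crossing neighbor, which reduces to conflict-free coloring the hypergraph $\{\,N_G(v)\cap S : v\in V\,\}$ on the ground set $S$ of crossing shapes. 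I would color $S$ on a palette disjoint from the recursive calls, using $O(\log^{2}n)$ colors, and repeat over the $O(\log n)$ levels of the recursion. Crossing shapes created at different levels may be adjacent, which is what forces the palettes to be disjoint across levels; since at a fixed depth the subproblems are pairwise non-adjacent their crossing sets can share one palette, so the accounting is $O(\log n)$ levels times $O(\log^{2}n)$ per level, i.e. $O(\log^{3}n)$.

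The engine is thus the crossing subroutine, and I expect it to be the main obstacle. All shapes of $S$ straddle $\ell$, which pins the horizontal coordinate and collapses their mutual-overlap pattern in the horizontal direction to a one-dimensional interval-overlap, i.e. a circle-graph instance, to which the distance-layer division behind Theorem~\ref{CFint} applies and yields $O(\log n)$ colors. The genuinely new difficulty, absent for chords, is the height (nesting) coordinate: a chord is pinned by two boundary points, whereas a grounded L is pinned by a grounding point together with a whole spanning interval and a height, one extra degree of freedom. My plan is to absorb this coordinate into a single additional logarithmic factor, so that the crossing family is conflict-free colorable with $O(\log^{2}n)$ colors. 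The delicate points are verifying that after the height coordinate is fixed the crossing family really does reduce to a circle-graph instance in the sense needed for the hyperedges $N_G(v)\cap S$ (so that the local uniquely-colored vertices remain globally unique across the shorter/taller split and across levels), and bounding the height overhead by exactly one logarithmic factor rather than more. This last estimate is where I expect the real work to lie, and it is also what leaves the gap between the $\Omega(\log n)$ lower bound and the $O(\log^{3}n)$ upper bound in Theorem~\ref{thm:L-shapes}.
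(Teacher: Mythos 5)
Your lower bound is exactly the paper's, and the skeleton of your upper bound matches the paper's as well: split by a median vertical line, recurse on the two sides with a shared palette, and pay a separate $O(\log^2 n)$ palette for the shapes crossing the line, yielding $L(n)\leq L(n/2)+O(\log^2 n)=O(\log^{3}n)$. However, two genuine gaps remain, and they are precisely where the paper's effort is concentrated.

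First, your claim that every vertex $v$ whose punctured neighborhood avoids all crossing shapes ``is handled entirely inside one recursive call'' fails for the crossing shapes themselves. A crossing shape $v$ with no crossing neighbor belongs to neither recursive call, and its hyperedge $N_G(v)\cap S$ is empty, so your crossing subroutine does not serve it either; yet it must still receive a uniquely colored neighbor among its non-crossing neighbors, which in your scheme are colored by recursive calls that are oblivious to $v$ and that share one palette across the two sides. These vertices are exactly the paper's set $\famF^2_M$, and handling them is a substantial part of the argument: the paper extracts their neighbor sets $\famV'_L\cup\famV'_R$ from the recursive instances, CF-colors $\famF^2_M$ with respect to $\famV'_L\cup\famV'_R$ on a fresh palette (the coloring $c_2$, built via Observation~\ref{obs:order}, Proposition~\ref{prop:discrete} and Lemma~\ref{lem:prelim-strings-planar}), and must then repair the vertices of $\famF_L\cup\famF_R$ that this extraction isolates (the set $\famI$). (Your treatment of the symmetric difficulty --- a non-crossing vertex all of whose neighbors cross the line --- is fine, since such a vertex does contribute a nonempty hyperedge $N_G(v)\cap S$.)

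Second, the engine of the whole proof --- that the crossing family can be CF-colored with $O(\log^{2}n)$ colors \emph{with respect to the hyperedges $N_G(v)\cap S$ induced by arbitrary outside vertices}, not merely with respect to its internal overlap structure --- is asserted rather than proved; you yourself flag it as where the real work lies. The circle-graph reduction (Observation~\ref{LIO} in the paper) only handles the intersection graph of the crossing shapes among themselves (it yields the constant-size proper coloring $f_2$). The hyperedges induced by outside vertices require the auxiliary graphs $G_1,G_2,G_3$ of Claim~\ref{cl:f_1} and, crucially, Proposition~\ref{DF}: a recursive counting argument showing that pairwise disjoint grounded L-shapes, each with exactly one left and one right neighbor in an $m$-element family and with pairwise distinct neighborhoods, number only $O(m\log m)$. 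That bound gives $O(\log n)$-degeneracy of the auxiliary graph, hence a proper $O(\log n)$-coloring, which Lemma~\ref{lem:weakToKCF} lifts to the needed $O(\log^{2}n)$ CF-coloring. Without an argument of this kind your per-level cost of $O(\log^{2}n)$ --- and hence the final exponent $3$ --- is unsupported.
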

\noindent As in the proof of Theorem~\ref{CFint}, the proof yields a polynomial-time coloring algorithm.


\subsubsection{$k$-CF coloring of string graphs}

As mentioned above, the CF-chromatic number of a string graph may be as large as $\Theta(\sqrt{n})$, which was shown by Cheilaris~\cite{CheilarisCUNYthesis2009} and by Pach and Tardos~\cite{CFPT09} to hold as an upper bound in any graph. We consider several classes of string graphs that are `hard to CF-color' in this sense, and we obtain polylogarithmic upper bounds on their $k$-CF chromatic numbers, for small values of $k>1$.
Specifically, we prove an upper bound of $\kpn(G) = O(\log n)$ for several classes of string graphs, for a constant $k$. In particular, we prove:

\begin{theorem}
\label{thm:framesLshapes}

For a class $\mathcal{F}$ and for $k \in \mathbb{N}$, denote by $\mathcal{F}^k(n)$ the largest $k$-CF chromatic number of an intersection graph of a family of $n$ elements of $\mathcal{F}$. Then:
\begin{enumerate}
\item For the class $\famL$ of L-shapes, $\famL^k(n)= \Theta(\log n)$ for all fixed $k \geq 2$.

\item For the class $\famF$ of frames, $\famF^k(n)= \Theta(\log n)$ for all fixed $k \geq 4$.
\end{enumerate}

\end{theorem}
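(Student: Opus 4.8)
The plan is to prove matching upper and lower bounds; both hold throughout the stated ranges of $k$, and the heart of the argument is the upper bound. The key phenomenon will be that the admissible value of $k$ is exactly the number of straight \emph{sides} of the objects, namely two for an L-shape (a vertical and a horizontal segment) and four for a frame. For the upper bound I would use the standard iterative CF framework in its $k$-relaxed form: if for every induced subfamily one can color the objects with a constant number $t$ of colors so that no neighborhood of size larger than $k$ is monochromatic, then repeatedly assigning a fresh color to the largest color class and recursing on the rest yields a $k$-CF coloring with $O(t\log n)$ colors. Indeed, for a hyperedge $S=N(v)$, consider the last round in which $S$ still retains a vertex: all surviving vertices of $S$ then lie in a single color class, so they form a monochromatic neighborhood, hence number between $1$ and $k$, which is precisely the $k$-CF condition. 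It therefore suffices to produce, with $O(\log n)$ levels, a single leveling $L$ of the objects such that for every side $s$ (each vertical or horizontal segment bounding some object) the set of objects crossing $s$ has a unique maximum-$L$ element. Given such an $L$, write $N(v)=\bigcup_s N^s(v)$ as the union over the at most $k$ sides of $v$ of the objects crossing side $s$; the globally maximal level $m$ in $N(v)$ is, for each side $s$, either not attained in $N^s(v)$ or equal to the maximum level of $N^s(v)$ and hence attained there exactly once, so summing over the at most $k$ sides, level $m$ is attained by between $1$ and $k$ objects of $N(v)$. Coloring by $L$ is then $k$-CF, with $k=2$ for $\famL$ and $k=4$ for $\famF$.

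It remains to build the leveling $L$. For a fixed vertical segment $\sigma$, the objects crossing it are exactly those whose horizontal sides meet $\sigma$, i.e.\ a stabbing family of horizontal segments; this is an interval-type structure of the same flavor as the ones underlying circle graphs, so the distance-layer recursion used for Theorems~\ref{CFint} and~\ref{thm:L-shapes} produces a unique-maximum leveling with $O(\log n)$ levels, and symmetrically for a fixed horizontal segment. The crux, and what I expect to be the main obstacle, is to realize all side-relations at once --- horizontal-stabbed-by-vertical and vertical-stabbed-by-horizontal --- with one shared leveling while keeping the number of levels $O(\log n)$: each object must receive a single level that simultaneously serves both of its arms (all four sides, for a frame), and a naive product of the per-direction recursions would cost $O(\log^2 n)$. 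I would resolve this by interleaving the two (respectively four) recursions into a common layered partition in which the per-side unique-maximum property survives in each direction separately, which is exactly the place where the structure of segments, as opposed to arbitrary axis-parallel boxes, must be exploited.

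For the lower bound I would realize the ``$n$ points with respect to intervals'' hypergraph inside the neighborhood hypergraph: place $n$ object-points and, for each interval of a recursive segment-tree family, one query object whose neighborhood among the points is precisely that interval. This is straightforward for L-shapes, where a horizontal arm crosses a contiguous block of vertical arms, and can likewise be arranged for frames, using only $O(n)$ objects in total. Any $k$-CF coloring must assign each such interval a color used between $1$ and $k$ times; those at most $k$ occurrences split the interval into at most $k+1$ sub-intervals, on the largest of which that color is absent, so it is $k$-CF-colored with one fewer color. This recursion gives $\kpn \geq \Omega(\log_{k+1} n)=\Omega(\log n)$ for every constant $k$, matching the upper bound and establishing $\famL^k(n)=\famF^k(n)=\Theta(\log n)$ in the stated ranges.
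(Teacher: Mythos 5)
Your reduction from a ``unique-maximum-per-side leveling with $O(\log n)$ levels'' to a $k$-CF coloring is sound (and your identification of the right $k$ as the number of arms matches the paper's conclusion), but the leveling itself is never constructed, and that is where the proof breaks down. You flag the interleaving of the two (respectively four) per-direction recursions as ``the main obstacle'' and only assert that it can be resolved; that assertion is the entire content of the upper bound. Moreover, even the single-direction building block is unsubstantiated: for a fixed vertical arm $\sigma$, the family of objects whose horizontal arms cross $\sigma$ is not an interval-overlap structure (it is a stabbing hypergraph of horizontal segments by vertical segments, governed by three independent coordinate constraints), and the distance-layer recursion of Theorem~\ref{CFint} produces a conflict-free coloring, not the far stronger \emph{unique-maximum} coloring that your argument needs on every such hyperedge simultaneously. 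It is far from clear that an $O(\log n)$-level unique-maximum leveling exists even for one direction. The paper never asks for a unique maximum: Theorem~\ref{tlcs} shows that a \emph{weak coloring with a constant number of colors} suffices, obtained by observing that any neighborhood of size at least $s+1$ must contain two objects whose arms are \emph{consecutive} along some arm of the common neighbor (pigeonhole on intersection patterns), that the resulting consecutiveness graphs are planar by Lemma~\ref{lem:prelim-strings-planar}, hence the auxiliary graph is $O(1)$-colorable; Lemma~\ref{lem:weakToKCF} then boosts this to $O(\log n)$ colors. Your first paragraph actually invokes this boosting lemma, but you then abandon it in favor of the direct leveling, which is the wrong (and much harder) target.

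The lower bound also has a gap, though a more repairable one. With query objects only for the canonical intervals of a segment tree, the recursion you describe --- delete the at most $k$ occurrences of the sparse color and recurse on the largest remaining piece --- does not apply, because that piece is not a canonical interval and hence is not a hyperedge of your hypergraph. One can repair this by descending $\lceil\log_2(k+1)\rceil$ levels of the dyadic tree to reach a canonical child avoiding the sparse color, or by taking all $\binom{n}{2}$ intervals as query objects; either way you must also verify that the query L-shapes (or frames) can be drawn pairwise disjoint, so that each one's neighborhood is exactly its intended point set and the sparse color cannot be supplied by another query object. The paper instead transfers the circle-graph lower bound of Lemma~\ref{LBint} to L-shapes and frames via the transformation of Claim~\ref{cl:olap_is_frames}; your point-versus-query construction is a genuinely different and potentially simpler route, but it needs these details filled in.
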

We note that for both frames and L-shapes, a similar result could not be achieved for the CF-chromatic number; indeed, there exist graphs $G$ on $n$ frames (or L-shapes) with $\pn(G) = \Theta(\sqrt{n})$.


We obtain Theorem~\ref{thm:framesLshapes} as a special case of a new general result that allows bounding the $k$-CF chromatic number of string graphs that satisfy a complex-looking condition (Theorem~\ref{tlcs} below). We demonstrate the applicability of our general result by using it to prove the following seemingly unrelated upper bound on the CF-chromatic number of string graphs in terms of their chromatic number:
\begin{theorem}\label{thm:intro-string-graphs-UB}
Let $G=(V,E)$ be a string graph on a set $S$ of $n$ strings such that $\chi(G) \leq t$. Then $\pn(G) = O(t^2 \log n)$.
\end{theorem}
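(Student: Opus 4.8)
The plan is to separate the two factors in the bound: the $\log n$ will come from a general recursive reduction that turns any proper coloring of the neighborhood hypergraph into a CF-coloring, and the $t^2$ will come from a geometric bound on the number of colors such a proper coloring requires. Throughout, call a coloring of a hypergraph \emph{proper} if no hyperedge of size $\ge 2$ is monochromatic. The first step is a reduction lemma: \emph{if every induced sub-hypergraph of $H=(W,\E)$ admits a proper coloring with $k$ colors, then $\pn(H)=O(k\log|W|)$.} I would prove this by a one-level recursion. Properly $k$-color $H$, let $B$ be a largest color class (so $|B|\ge |W|/k$), give all of $B$ one fresh color, and recurse on the induced sub-hypergraph $H[W\setminus B]$ with a disjoint palette. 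For a hyperedge $S$ with $|S|\ge 2$, properness gives $S\not\subseteq B$, so $S$ meets $W\setminus B$; the recursive CF-coloring yields a vertex of $S\setminus B$ whose color is unique within $S\setminus B$, and since that color is disjoint from the color of $B$ it is unique within all of $S$, while singleton hyperedges are conflict-free automatically. Since $|W|$ shrinks by a factor $1-1/k$ per level, the depth is $O(k\log|W|)$, giving $O(k\log|W|)$ colors. Applied to the neighborhood hypergraph $H_G$ of $G$, whose induced sub-hypergraphs are of the same form (and for which $\chi$ stays $\le t$), it remains to show that $H_G$ is properly colorable with $O(t^2)$ colors.

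To bound the proper chromatic number of $H_G$, I would fix a curve representation of $G$ and a proper $t$-coloring $\phi\colon V\to[t]$; its classes $A_1,\dots,A_t$ are independent sets, i.e.\ pairwise non-crossing families of curves. I will color each vertex by a pair $(\phi(\cdot),\psi(\cdot))$ with $\psi$ drawn from a palette of size $O(t)$ reused across the (disjoint) classes. A neighborhood $N_G(v)$ can be monochromatic in this product coloring only if it is contained in a single class $A_a$ and is $\psi$-monochromatic, since a neighborhood meeting two classes is already bichromatic via $\phi$. Hence it suffices to choose, inside each non-crossing family $A_a$, a coloring by $O(t)$ colors so that for every curve $v$ the crossed set $N_G(v)\cap A_a$, when of size $\ge 2$, is non-monochromatic; this gives a proper $O(t^2)$-coloring of $H_G$ and, with the reduction lemma, the claimed bound $\pn(G)=O(t^2\log n)$.

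The within-family coloring is the crux and the step I expect to be the main obstacle. The hypergraph to color has as ground set a pairwise non-crossing family $A_a$ and as hyperedges the crossed sets $N_G(v)\cap A_a$ of arbitrary curves $v$. The structural fact I would exploit is that a single curve $v$ meets the members of the non-crossing family $A_a$ in a well-defined linear order along $v$, so that each crossed set is an ``arc-'' or ``interval-type'' range on the planar arrangement of $A_a$; moreover, two curves whose crossed sets \emph{interleave} are forced to cross each other and hence are adjacent in $G$. Consequently the relevant overlap structure of the crossing curves embeds into $G$, so its chromatic number is at most $\chi(G)\le t$, and it is exactly this that caps the proper chromatic number of the crossing hypergraph at $O(t)$. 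This is precisely the interval-overlap / circle-graph phenomenon analyzed in Theorems~\ref{CFint} and~\ref{thm:L-shapes}, and it is where the string hypothesis is essential: for a general graph the statement fails, since for the bipartite incidence graph of all $\binom{n}{2}$ pairs over $n$ points every proper coloring of the neighborhood hypergraph needs $\Omega(n)$ colors although $\chi=2$ — realizing so many interleaving crossed sets by curves would force the crossers to pairwise cross and thereby inflate $\chi(G)$, which the hypothesis $\chi(G)\le t$ forbids.

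The technical heart, then, is to convert the bound $t$ on the overlap chromatic number of the crossers into an actual $O(t)$-coloring of the ground family $A_a$ that breaks every arc-type crossed set; the difficulty is that the different curves impose different linear orders, so there is no single interval representation to color against, and one must instead work with the planar arrangement of the non-crossing family simultaneously for all crossers. I would carry this out as an instance of the general machinery of Theorem~\ref{tlcs}, whose hypothesis is tailored to certify exactly such ``bounded-overlap'' colorings; verifying that the crossing hypergraph of a non-crossing family, under the constraint $\chi(G)\le t$, meets that hypothesis is the main work, after which the reduction lemma and the product construction assemble the final bound $\pn(G)=O(t^2\log n)$.
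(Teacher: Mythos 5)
Your overall architecture matches the paper's: use the proper $t$-coloring to split $S$ into pairwise non-crossing classes, reduce everything to a hereditary proper coloring via a weak-to-CF lemma (your reduction lemma is Lemma~\ref{lem:weakToKCF} with $k=1$), and locate the $O(t)$ factor in a per-class coloring that breaks every crossed set $N_G(v)\cap A_a$. But the step you yourself flag as ``the main work'' --- the hereditary proper $O(t)$-coloring of each non-crossing class against all crossed sets --- is exactly the content of the theorem, and neither of the two justifications you sketch survives. First, the structural claim that two curves whose crossed sets interleave must cross each other is false for general strings: take four disjoint vertical segments $a_1,a_2,a_3,a_4$ in left-to-right order with $a_1,a_3$ tall and $a_2,a_4$ short, a high horizontal crosser meeting only $a_1,a_3$ and a low one meeting only $a_2,a_4$; the crossed sets interleave but the crossers are disjoint. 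This is precisely why the circle-graph/overlap argument of Theorem~\ref{CFint} does not transfer and why there is no single linear structure on $A_a$ to color against. Second, invoking Theorem~\ref{tlcs} requires exhibiting a partition of the curves satisfying the partition conditions and bounding the number of intersection patterns, and you leave this unverified; the crossers of $A_a$ cross one another, so the naive choice $\famF=A_a$, $\famC=V\setminus A_a$ with trivial partitions violates the condition that same-index parts of distinct curves be disjoint.

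The missing idea is the paper's encoding of the $t$-coloring into the partition structure itself: write each string $s$ in color class $\famS_j$ (your $A_j$) as $s=s_1\cup\dots\cup s_t$ with $s_j=s$ and all other parts empty. Then same-index parts are disjoint (they lie in a common independent class), each crosser gives exactly one intersection pattern, each intersecting pair gives at least one, and $\famS_i\times\bigcup_{j\neq i}\famS_j$ is a $(t-1)$-family; Theorem~\ref{tlcs} with $l=s=1$ and $m=t-1$ then yields a CF-coloring of each $H_i$ with $O(t\log n)$ colors, and disjoint palettes over $i$ give $O(t^2\log n)$. Internally this works because for each pair of non-crossing classes the ``consecutive along'' graph is planar by Lemma~\ref{lem:prelim-strings-planar}, so the auxiliary graph is a union of $t-1$ planar graphs and is $O(t)$-colorable --- this, rather than a chromatic bound on an overlap graph of the crossers, is how the hypothesis $\chi(G)\leq t$ enters. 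Until you supply this (or an equivalent) verification, the proposal is an outline whose central claim is unproven.
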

Using a general coloring result of Fox and Pach~\cite{FP14} for string graphs with clique number at most $\omega$, Theorem \ref{thm:intro-string-graphs-UB} implies that any string graph $G$ with clique number $\omega$ admits a CF-coloring with $O(\log^{c \omega} n)$ colors, where $c$ is a universal constant.



\subsubsection{$k$-CF coloring of general hypergraphs}

Clearly, there is in no sense comparing the above results on the $k$-CF chromatic number to the general upper bound $O(\sqrt n)$ on the CF-chromatic number of any graph. In order to understand what is the `worst case' of $k$-CF coloring, we obtain a near tight upper bound on the $k$-CF chromatic number for general hypergraphs. This result is of independent interest and generalizes the result of Cheilaris~\cite{CheilarisCUNYthesis2009} and of Pach and Tardos~\cite{CFPT09}, who showed that for any hypergraph $H$ with at most $m$ hyperedges, $\chi_{\mathrm{cf}}(H) = O(\sqrt{m})$.


\begin{theorem}\label{thm:intro-kcf-hypergraphs}
Let $H=(V,\E)$ be a hypergraph with $n$ vertices and at most $m$ hyperedges. For any $k > 1$,
$$\chi_{\text{\textup{k-cf}}}(H) = O(m^{\frac{1}{k+1}}\log^{\frac{k}{k+1}} n).$$
\end{theorem}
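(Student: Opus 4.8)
The plan is to exhibit a single random coloring $c\colon V\to[t]$ with $t=\Theta(m^{1/(k+1)}\log^{k/(k+1)}n)$ and to show that, with positive probability, it is a $k$-CF coloring; the bound then follows by the probabilistic method. The first reduction is that hyperedges of size at most $k$ may be ignored entirely: in \emph{any} coloring, a hyperedge $S$ with $\card{S}\le k$ has every color class of size at most $k$, so any color present in $S$ appears between $1$ and $k$ times and $S$ is automatically satisfied. Hence I may assume every hyperedge has size at least $k+1$.

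The heart of the argument is a good bound on the probability that a fixed hyperedge $S$ is \emph{bad}, where bad means that every color appears in $S$ either $0$ times or at least $k+1$ times. The structural observation I would use is that if $S$ is bad then the coloring restricted to $S$ uses at most $\card{S}/(k+1)$ distinct colors, since each present color occupies at least $k+1$ of the $\card{S}$ slots. Writing $s=\card{S}$ and $r=\lceil s/(k+1)\rceil$, a union bound over the choice of the color set gives $\Prob{S\text{ bad}}\le\binom{t}{r}(r/t)^{s}$, which after simplification decays geometrically in $s$ once $t$ is a large enough multiple of $s/(k+1)$. Summing $\Prob{S\text{ bad}}$ over the at most $m$ hyperedges and requiring the total to be below $1$ is what pins down $t$. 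To keep the dependence on the edge sizes under control I would bucket the hyperedges dyadically by size, treating the $O(\log n)$ scales $2^{\ell}\le\card{S}<2^{\ell+1}$ separately; balancing the union bound within each bucket against the number of buckets is the source of the $\log^{k/(k+1)}n$ factor.

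The main obstacle, and the reason the bound is more delicate than a one-line Chernoff estimate, is twofold. First, a single uniform coloring cannot control the largest hyperedges: once $s\gg t$ every color is forced to appear $\Omega(s/t)\ge k+1$ times, so such an $S$ is bad almost surely. I would overcome this by a multi-scale device, sampling the ground set down through $O(\log n)$ nested levels so that at the scale matched to $S$ only $O(t)$ of its vertices remain active; this makes the per-scale picture resemble the moderate-size case and is the second place the logarithmic factors enter. Second, and more seriously, the plain union bound is tight only for the smallest hyperedges, of size exactly $k+1$: there $\Prob{S\text{ bad}}$ is essentially the probability of being monochromatic, namely $\Theta(t^{-k})$, and summing over $m$ such edges would force only the weaker bound $t=\Theta(m^{1/k})$. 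Recovering the correct exponent $1/(k+1)$ requires beating this first-moment barrier, exactly as avoiding a monochromatic $(k+1)$-uniform configuration needs far fewer colors than the first moment suggests. I expect this to be the crux, and would address it either through the Lovász Local Lemma, exploiting that each bad event $A_S$ is independent of all but few others, or through an iterative color-and-delete scheme that keeps the effective color classes of size at most $k$ where the hyperedges are densest; this mirrors the extremal example, the complete $(k+1)$-uniform hypergraph, whose optimal $k$-CF coloring uses color classes of size exactly $k$ and attains $\Theta(m^{1/(k+1)})$.
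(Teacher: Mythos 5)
Your proposal correctly identifies the target bound and, to your credit, the two real obstacles, but it does not actually overcome either of them, and the fix you gesture at (``the Lov\'asz Local Lemma, exploiting that each bad event $A_S$ is independent of all but few others'') is not available in the generality you need. In an arbitrary hypergraph with $m$ hyperedges there is no bound on the degree of a vertex, so a single hyperedge $S$ can intersect every other hyperedge (e.g.\ all hyperedges through a common vertex), and the dependency degree of $A_S$ can be $m-1$. The missing idea --- and the one the paper's proof is built around --- is a preliminary \emph{degree truncation}: fix a parameter $\Delta$, repeatedly find a vertex lying in more than $\Delta$ hyperedges, give it a brand-new unique color, and delete it together with all hyperedges containing it. Each deletion kills at least $\Delta$ hyperedges, so this costs at most $m/\Delta$ colors, and it leaves a hypergraph of maximum degree at most $\Delta$, where the symmetric Local Lemma \emph{does} apply (each hyperedge now meets at most $(k+1)\Delta$ others). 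It is precisely the balancing of the two terms $m/\Delta$ and $\Delta^{1/k}\log n$ that produces the exponent $\tfrac{1}{k+1}$; without the truncation step there is no second term to trade against, and as you yourself observe, the first-moment bound over size-$(k+1)$ edges forces $t=\Omega(m^{1/k})$, which is weaker than claimed.

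A second, related gap is that you aim the random coloring directly at the $k$-CF property (your ``bad'' event is that every color class in $S$ has size $0$ or $\geq k+1$). The paper instead asks the random coloring only for the much weaker \emph{$(k+1)$-weak} property --- no hyperedge of size $\geq k+1$ is monochromatic --- which for a degree-$\Delta$ hypergraph the Local Lemma delivers with $O(\Delta^{1/k})$ colors (after first shrinking every hyperedge to a $(k+1)$-subset). The upgrade from a $(k+1)$-weak coloring to a $k$-CF coloring is then done deterministically by the iterated Har-Peled--Smorodinsky framework (Lemma~\ref{lem:weakToKCF}): repeatedly weak-color, freeze the largest color class with a fresh final color, and recurse on the rest; this is where the $\log n$ factor enters. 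Your closing mention of ``an iterative color-and-delete scheme'' is pointing at exactly this mechanism, but as written the proposal leaves both the truncation step and the weak-to-$k$-CF reduction as unexecuted intentions, and these are the two steps that make the theorem true.
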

In particular, as the neighborhood hypergraph of a graph on $n$ vertices has at most $n$ hyperedges, this implies that any graph $G$ on $n$ vertices satisfies $\kpn(G)=O(n^{\frac{1}{k+1}}\log^{\frac{k}{k+1}} n)$.

Our proof technique is different from that of~\cite{CheilarisCUNYthesis2009,CFPT09}. We make use of the Lov\'{a}sz Local Lemma to find a \emph{$(k+1)$-weak} coloring of $H$ (i.e., a coloring of $H$ in which no hyperedge of size $\geq k+1$ is monochromatic), and then leverage it to a $k$-CF coloring of $H$ using an algorithm presented in~\cite{HS02}.

We prove that Theorem~\ref{thm:intro-kcf-hypergraphs} is tight up to logarithmic factors by presenting string graphs $G$ on $n$ vertices that satisfy $\kpn(G)=\Omega(n^{\frac{1}{k+1}})$.


\medskip Our results are summarized in Table~\ref{tab:results}.

\begin{table}[ht]
\begin{center}
\begin{tabular}{cccc}
\hline
Class of &  CF-chromatic & $k$-CF-chromatic & Source\\
 string graphs & number & number & \\
\hline
Chords of a circle & $\Theta(\log n)$ & $\Theta(\log n)$ for $k \geq 1$ & Thm.~\ref{CFint} and Lemma.~\ref{LBint}\\
L-shapes & $\Theta(\sqrt{n})$ (folklore)  & $\Theta(\log n)$ for $k \geq 2$ & Thm.~\ref{thm:framesLshapes}  \\
grounded L-shapes & $\Omega(\log {n})$, $O(\log^3 n)$  & $\Theta(\log n)$ for $k \geq 2$ & Thm.~\ref{thm:L-shapes} and Remark~\ref{rem:gLs}\\
Frames & $\Theta(\sqrt{n})$ (folklore) & $\Theta(\log n)$ for $k \geq 4$ & Thm.~\ref{thm:framesLshapes} \\
Chromatic number $t$ & $\Omega(\log n)$, $O(t^2 \log n)$ & $O(t^2 \log n)$ for $k \geq 1$ & Thm.~\ref{thm:intro-string-graphs-UB} and Prop.~\ref{Prop:Ext-thm-bounding-chromatic} \\
Clique number $\leq \omega$ & $\Omega(\log n)$, $O(\log^{c\omega} n)$ & $O(\log^{c\omega} n)$ for $k \geq 1$ & Thm.~\ref{thm:intro-string-graphs-UB} and Prop.~\ref{Prop:Ext-thm-bounding-chromatic} \\
\hline
\end{tabular}
\caption{For several classes of string graphs, the table lists the maximal ($k$)-CF-chromatic number of an intersection graph of $n$ strings that belong to the corresponding class.} \label{tab:results}
\end{center}
\end{table}

\paragraph{Organization of the paper.}
 After presenting some preliminaries in Section~\ref{sec:prelim}, we present the proofs of Theorems~\ref{CFint} and~\ref{thm:L-shapes} in Section~\ref{sec:main}.  In Section~\ref{sec:kcf}, the proofs of Theorems~\ref{thm:framesLshapes},~\ref{thm:intro-string-graphs-UB}, and~\ref{thm:intro-kcf-hypergraphs} are presented. 
 Finally, in Section~\ref{sec:discussion} we discuss the implications of our results.

\section{Preliminaries}
\label{sec:prelim}

In this section, we present several standard definitions and previous results that will be used throughout the paper.

\mn \textbf{Degree.} The degree of a vertex $v$ in a hypergraph $H=(V,\E)$ is the number of hyperedges that contain $v$. We denote the \emph{maximum degree} of $H$, i.e., the maximum degree of a vertex $v \in V(H)$, by $\Delta=\Delta(H)$.

\mn \textbf{Induced hypergraph.} An induced subhypergraph $H'=(V',\E')$ of a hypergraph $H=(V,\E)$ is a subhypergraph of $H$ in which $V' \subset V$, and the hyperedges in $\E'$ are the restriction of the hyperedges in $\E$ to $V'$; namely, $\E'=\{s\cap V' : s\in \E' \}$.

\mn \textbf{Proper coloring and $k$-weak coloring.} Let $H=(V,\E)$ be a hypergraph. We say that $c$ is a \emph{proper coloring} of $H$ if for every hyperedge $S \in \E$ with $\card{S}\geq 2$ there exist two vertices $u,v \in S$ such that $c(u) \neq c(v)$; that is, if every hyperedge with at least two vertices is non-monochromatic.

$c$ is called a \emph{$k$-weak coloring} if the same condition holds for any hyperedge of size $\geq k$. This notion was used implicitly in \cite{HS02,smoro} and then was explicitly defined and studied in the Ph.D. thesis of Keszegh \cite{Kes1,Kes2}. It is also related to the notion of
cover-decomposability and polychromatic colorings (see, e.g., \cite{gibsonvar,pach,PachToth}).


\mn \textbf{From weak coloring to $k$-CF coloring.} In \cite{HS02}, the authors prove the following lemma that connects weak coloring with $k$-CF coloring of hypergraphs.
\begin{lemma}
\label{lem:weakToKCF}
Let $k$ be a fixed constant. Let $n,t \in \mathbb{N}$, and let $H=(V,\E)$ be a hypergraph with $|V|=n$. If $H$, as well as any induced subhypergraph of $H$, admit a $(k+1)$-weak coloring with $t$ colors, then $$\chi_{\text{\textup{k-cf}}}(H)= O(t \log n).$$ Furthermore, a $k$-CF coloring with $O(t \log n)$ colors can be obtained by a polynomial time algorithm.
\end{lemma}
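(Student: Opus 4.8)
The plan is to prove the lemma via the iterative ``peeling'' framework of Har-Peled and Smorodinsky: we build the $k$-CF coloring round by round, where in each round we color and delete a large portion of the remaining vertices using a single fresh color, and we argue that only $O(t\log n)$ rounds are ever needed. Concretely, I would set $V_1 = V$ and, for rounds $i = 1, 2, \ldots$, proceed as follows. Given the current vertex set $V_i$, invoke the hypothesis to obtain a $(k+1)$-weak coloring of $H[V_i]$ (the subhypergraph induced on $V_i$) with $t$ colors, and let $C_i$ be a \emph{largest} color class of this coloring. Assign the single color $i$ to every vertex of $C_i$, set $V_{i+1} = V_i \setminus C_i$, and repeat until the vertex set is exhausted.

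Bounding the palette is the easy half. Since $C_i$ is the largest of $t$ classes partitioning $V_i$, we have $|C_i| \ge |V_i|/t$, so $|V_{i+1}| \le (1 - 1/t)|V_i|$; iterating yields $|V_{i+1}| \le n(1-1/t)^{i} \le n\,e^{-i/t}$, which drops below $1$ once $i > t \ln n$. Hence the process terminates after $O(t \log n)$ rounds, and since each round introduces exactly one new color, the coloring uses $O(t\log n)$ colors.

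It remains to verify that the coloring is $k$-CF, which is where the $(k+1)$-weak property is used. Fix a hyperedge $S \in \E$ and let $i^\star$ be the largest round index attained by a vertex of $S$. By maximality of $i^\star$, no vertex of $S$ survives into round $i^\star + 1$, i.e. $S \cap V_{i^\star+1} = \emptyset$; since $V_{i^\star+1} = V_{i^\star}\setminus C_{i^\star}$, this forces $S \cap V_{i^\star} \subseteq C_{i^\star}$, and these are precisely the vertices of $S$ receiving color $i^\star$. Now $S \cap V_{i^\star}$ is an induced hyperedge of $H[V_{i^\star}]$ lying inside the single weak-color class $C_{i^\star}$, so it is monochromatic in that round's $(k+1)$-weak coloring; by the defining property of a $(k+1)$-weak coloring, a monochromatic hyperedge has size at most $k$, whence $1 \le |S \cap V_{i^\star}| \le k$. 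Therefore color $i^\star$ appears in $S$ between $1$ and $k$ times and witnesses the $k$-CF condition for $S$.

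The main obstacle — and the point one must get exactly right — is this last containment argument: it is essential that in each round we delete a \emph{single} weak-color class rather than an arbitrary large subset, since only then is the top surviving slice $S \cap V_{i^\star}$ guaranteed to be monochromatic and hence controllable by the $(k+1)$-weak bound; peeling off, say, a union of several classes would break the implication. For the ``furthermore'' claim, the same procedure runs in polynomial time provided each round's $(k+1)$-weak coloring is computed in polynomial time (as holds in all our applications): there are $O(t\log n)$ rounds, each manipulating an induced subhypergraph on at most $n$ vertices, so the total running time is polynomial.
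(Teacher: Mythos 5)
Your proposal is correct and is essentially the paper's own proof: it is exactly the Har-Peled--Smorodinsky peeling algorithm (repeatedly extract and freshly color the largest class of a $(k+1)$-weak coloring of the surviving induced subhypergraph), with the same $(1-1/t)$ decay bound on the number of rounds. Your verification that the top surviving slice $S\cap V_{i^\star}$ lies in a single weak-color class and therefore has size at most $k$ is a careful spelling-out of the step the paper's proof sketch states in one line.
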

In the special case $k=1$, the lemma asserts that if $H$, as well as any induced subhypergraph of $H$, admit a proper coloring with $t$ colors, then $\chi_{\text{\textup{cf}}}(H)= O(t \log n)$.

For the sake of completeness, we present a proof sketch of the lemma.

\begin{proof} [Proof sketch]
We use the following algorithm, due to Har-Peled and Smorodinsky (\cite{HS02}).

\begin{algorithm}[htb!]
\caption{$k$-CFcolor$(H)$: {\it $k$-Conflict-Free-color a
hypergraph $H=(V,\E)$}.}
  \label{KCF-framework}
  \begin{algorithmic}[1]
    \STATE $i\gets 0$: {\it $i$ denotes an unused color}
    \WHILE{$V\neq \emptyset$}
    \STATE{\bf Increment:} $i\gets i+1$
    \STATE {\bf Auxiliary coloring:} {find a $(k+1)$-weak coloring $\chi$ of $H(V)$ with $t$ colors}
%
    \STATE {\bf $V' \gets$ Largest color class of $\chi$}
    \STATE {\bf Color:} $f(x)\gets i ~,~ \forall x\in V'$
    \STATE {\bf Prune:} $V\gets V\setminus V'$, $H\gets H(V)$
\ENDWHILE
\end{algorithmic}
\end{algorithm}
Algorithm~\ref{KCF-framework} outputs a valid $k$-CF-coloring of $H$, since for each $e \in \E$, the maximum color is assigned to at most $k$ vertices in $e$.


It remains to prove the claimed upper bound on the total number of colors used. Note that each time we execute Step 4 of Algorithm~\ref{KCF-framework}, we use a total of $t$ auxiliary colors, so in Step 7 we discard at least $\frac{\cardin{V}}{t}$ vertices. Thus, if $n$ is the number of vertices of $H$, then after the $i'th$ step there are at most $O(n(1-\frac{1}{t})^i)$ remaining vertices. Hence, the process terminates after $O(t \log n)$ steps. Since the total number of colors used by Algorithm~\ref{KCF-framework} equals the number of iterations, we obtain the asserted bound.
\end{proof}


\mn \textbf{The discrete interval hypergraph.} The discrete interval hypergraph, $I(n)$, is the hypergraph whose vertex set $S$ consists of $n$ points on a line $L$, and any interval on $L$ defines a hyperedge of all the points from $S$ that it contains. The following proposition is folklore (see, e.g.,~\cite{SMO10}).
\begin{proposition}
\label{prop:discrete}
$\cf (I(n)) = \lfloor \log n \rfloor +1$.
\end{proposition}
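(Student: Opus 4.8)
The plan is to establish the two matching inequalities $\cf(I(n)) \le \lfloor \log n \rfloor + 1$ and $\cf(I(n)) \ge \lfloor \log n \rfloor + 1$ separately, where all logarithms are to base $2$. Label the points $1, \ldots, n$ from left to right, so that the hyperedges of $I(n)$ are exactly the contiguous blocks $[a,b] = \{a, a+1, \ldots, b\}$.

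For the upper bound I would use a divide-and-conquer ``median'' coloring. Pick a middle point $m = \lceil n/2 \rceil$, assign it a fresh color (the largest one used), and recursively color the two sub-blocks $\{1, \ldots, m-1\}$ and $\{m+1, \ldots, n\}$ --- each of which is itself a discrete interval hypergraph on at most $\lfloor n/2 \rfloor$ points --- reusing the same palette of smaller colors in both halves. Correctness rests on a single observation: any hyperedge $[a,b]$ either contains $m$, in which case $m$ is the unique point of its color inside $[a,b]$ (that color appears nowhere else and the two halves use strictly smaller colors), or it lies entirely within one of the two halves, where conflict-freeness holds by the inductive hypothesis. If $f(n)$ denotes the number of colors this scheme uses, then $f(0) = 0$ and $f(n) = 1 + f(\lfloor n/2 \rfloor)$, which solves to $f(n) = \lfloor \log n \rfloor + 1$.

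For the lower bound I would prove the equivalent statement that any CF-coloring of $I(n)$ with $k$ colors forces $n \le 2^k - 1$, by induction on $k$. Since the full point set $\{1, \ldots, n\}$ is itself a hyperedge, some color $c$ appears exactly once in it, at a point $p$. As $c$ occurs nowhere else, the restriction of the coloring to the left block $\{1, \ldots, p-1\}$ is a CF-coloring of $I(p-1)$ that avoids $c$ entirely, hence uses at most $k-1$ colors; the same applies to the right block $\{p+1, \ldots, n\}$. By induction $p - 1 \le 2^{k-1} - 1$ and $n - p \le 2^{k-1} - 1$, so $n = (p-1) + 1 + (n-p) \le 2^k - 1$. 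Rearranging yields $k \ge \log(n+1)$, and since $k$ is an integer this is equivalent to $k \ge \lfloor \log n \rfloor + 1$.

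Neither half is deep, and the main thing to get right is the bookkeeping of floors and ceilings. Two places require care: verifying that the recursion $f(n) = 1 + f(\lfloor n/2 \rfloor)$ (which relies on the easy fact that the left block $\lceil n/2\rceil - 1$ is never larger than the right block $\lfloor n/2\rfloor$, so the two halves share one palette) really produces the stated closed form; and checking in the final step that the integer inequality $k \ge \log(n+1)$, i.e.\ $k \ge \lceil \log(n+1)\rceil$, coincides with $k \ge \lfloor \log n \rfloor + 1$ for every $n \ge 1$. This last matching of the floor expression with the ceiling $\lceil \log(n+1)\rceil$ is the only point at which the exact constant, rather than a mere asymptotic bound, is genuinely pinned down.
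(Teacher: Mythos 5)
Your proof is correct, and both halves are carried out with the right amount of care (the identity $\lceil n/2\rceil - 1 \le \lfloor n/2\rfloor$ justifying the shared palette, and the matching of $\lceil \log(n+1)\rceil$ with $\lfloor \log n\rfloor + 1$). The paper itself states this proposition as folklore and gives no proof, only a citation; the median-splitting upper bound and the $n \le 2^k - 1$ induction for the lower bound are exactly the standard argument that the citation refers to, so there is nothing to reconcile.
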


In the dual setting, given a multiset $\famI$ of $n$ intervals on some line $L$, the \emph{dual hypergraph of $\famI$} is the hypergraph with vertex set $\famI$  such that any point $x \in L$ defines the hyperedge $e_x= \{  s \in \famI | x \in s  \}$.  For our purposes, we need the proposition below in the case in which $\famI$ is a multiset; that is, an interval may appear in  $\famI$ multiple times, and each appearance gives rise to a vertex in the dual hypergraph of $\famI$.




\begin{proposition}
\label{prop:intdual}
The dual hypergraph of any finite multiset of intervals can be CF-colored (and, in particular, properly colored) with 3 colors.
\end{proposition}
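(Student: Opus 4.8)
The plan is to reduce to the case of a genuine set of intervals with distinct endpoints and then produce the coloring by a single left-to-right sweep. First I would dispose of the multiset subtleties: by an infinitesimal perturbation (separating coincident endpoints, and replacing each group of identical intervals by a tiny nested chain) I may assume that all $2n$ endpoints are distinct, and a standard limiting argument shows that a $3$-CF-coloring of the perturbed family, read back on the original intervals, conflict-free colors the original multiset. The point of the reduction is the following elementary observation: as a point $x$ moves along $L$, the hyperedge $e_x$ changes only when $x$ crosses an endpoint, gaining one interval at a left endpoint and losing one at a right endpoint. Hence it suffices to conflict-free color the finite sequence of distinct stabbing sets $e_{x_1}, e_{x_2}, \dots$, in which consecutive sets differ by a single insertion or deletion.

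I would then sweep from left to right, assigning to each interval one of the colors $\{1,2,3\}$ at the moment its left endpoint is crossed (using offline knowledge of the entire endpoint order), while maintaining the following invariant on the current open set $O$ (the intervals currently containing the sweep point): (i) $O$ is not monochromatic once $|O|\ge 2$, and (ii) at most one color occurs two or more times in $O$. Conditions (i) and (ii) immediately give conflict-freeness: if some color is repeated it is the unique such color by (ii), so every other present color occurs exactly once, and if no color is repeated then $O$ is rainbow; either way some color appears exactly once. The insertion (left-endpoint) events are easy to handle: whenever a new interval is opened there is always a choice of color in $\{1,2,3\}$ preserving (i) and (ii) --- if all three colors are already present one is forced to augment the unique repeated color, and otherwise a fresh or currently-absent color keeps the open set balanced.

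The main obstacle is the deletion (right-endpoint) events, and this is exactly where a third color becomes necessary. Removing an interval can shrink the open set into a configuration whose single surviving color is repeated, i.e.\ into a monochromatic set, violating (i); equivalently, the danger is that the unique singleton witness is the interval that closes next. The crux of the argument is therefore to choose the colors at the opening events so that the coloring is \emph{simultaneously} a proper coloring of the dual hypergraph (no monochromatic stabbing set, which is precisely what rules out the fatal deletion) and keeps at most one repeated color per stabbing set. Here the interval structure is essential: the closing order is dictated by the right endpoints, so the candidate ``heavy'' blocks of equal color are controlled, and one argues that the freedom afforded by three colors always suffices to thread both constraints at once. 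I expect this coordination between the opening choices and the forced closing order to be the only nontrivial part; the tightness of the bound (that two colors do not suffice) is witnessed by a small family on which every $2$-coloring produces either a monochromatic stabbing set or one in which both colors are repeated.
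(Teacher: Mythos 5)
There is a genuine gap, and it sits exactly where you placed it: the deletion events. Your insertion analysis is correct but only guarantees the invariants \emph{at the moment of insertion}, and that is not enough. Concretely, take $A=[0,100]$, $B=[1,10]$, $C=[2,11]$, $D=[3,99]$. When $D$ opens, the open set is $\{A,B,C\}$ carrying all three colors, none repeated, so by your own case analysis any color for $D$ preserves (i) and (ii) locally; but if $D$ receives the color of $A$, then at $x=50$ the stabbing set is $\{A,D\}$ and is monochromatic. So the correctness of the sweep hinges entirely on a lookahead rule for choosing which already-present color to repeat (namely, one belonging to an interval that closes early enough), together with a proof that such a choice is always available and never creates a second repeated color downstream. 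You explicitly defer this (``one argues that the freedom afforded by three colors always suffices,'' ``I expect this coordination\ldots to be the only nontrivial part''), but this coordination \emph{is} the entire content of the proposition; without it the argument is a restatement of the goal, not a proof. (The multiset reduction also needs a little care --- shrinking perturbations can destroy hyperedges such as the one stabbed at a shared endpoint of $[0,1]$ and $[1,2]$, so one must perturb in a way that only refines the hyperedge structure --- but that is a minor point.)

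For contrast, the paper avoids the sweep entirely: take an inclusion-minimal subfamily of $\famI$ covering $\bigcup\famI$; minimality forces every point to lie in at most two of its members, and consecutive members overlap, so coloring the cover alternately with colors $1,2$ and giving every remaining interval color $3$ makes each stabbing set contain one or two cover intervals of distinct colors from $\{1,2\}$, one of which is therefore unique. This global choice of a ``backbone'' is precisely the missing ingredient in your approach: it decides in advance which intervals are allowed to share the repeated color (everything outside the cover) and which must stay unique, which is what your opening-time choices would otherwise have to discover by lookahead. If you want to salvage the sweep, the cleanest fix is to build this minimal cover greedily during the sweep and let it dictate the colors.
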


Indeed, such a coloring can be obtained by considering a minimal cover of $\bigcup \famI$ by elements of $\famI$ (in which any point is covered by at most 2 intervals of $\famI$), coloring its elements alternately with 2 colors, and then coloring the remaining intervals with one additional color. (See, e.g.,~\cite{KS18} for details.)

\mn \textbf{$r$-degenerate graphs and planarity.} A graph $G$ is called \emph{$r$-degenerate} if each induced subgraph of $G$ contains a vertex of degree at most $r$.

The following easy claim relates the property of being $r$-degenerate to $(r+1)$-colorability.
\begin{claim}\label{Cl:r-degenerate}
Any $r$-degenerate graph is $(r+1)$-colorable.
\end{claim}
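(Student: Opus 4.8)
The plan is to prove the claim by induction on the number of vertices $n$, using a greedy argument that repeatedly deletes a vertex of low degree. The base case is a graph on a single vertex (or on no vertices), which is trivially $(r+1)$-colorable. For the inductive step, I would exploit the definition of $r$-degeneracy directly: since $G$ is an induced subgraph of itself, the definition guarantees a vertex $v \in V(G)$ whose degree is at most $r$.

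Having singled out such a vertex $v$, I would pass to $G - v$, the subgraph induced on $V(G) \setminus \{v\}$. The key observation is that $r$-degeneracy is a hereditary property under taking induced subgraphs: every induced subgraph of $G - v$ is again an induced subgraph of $G$, and hence contains a vertex of degree at most $r$. Thus $G - v$ is itself $r$-degenerate and has $n-1$ vertices, so by the induction hypothesis it admits a proper coloring with at most $r+1$ colors. To finish, I would reinsert $v$: because $v$ has at most $r$ neighbors in $G$, those neighbors occupy at most $r$ of the $r+1$ available colors, leaving at least one free color with which $v$ can be properly colored. This extends the coloring of $G - v$ to a proper $(r+1)$-coloring of all of $G$, completing the induction.

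Since the statement is a standard textbook fact, I do not expect any genuine obstacle; the only point requiring a moment's care is precisely the heredity step, namely verifying that the $r$-degeneracy of $G$ descends to $G - v$. This is exactly where the phrasing of the definition --- demanding a low-degree vertex in \emph{every} induced subgraph, rather than merely in $G$ itself --- does the work, guaranteeing that the induction hypothesis applies at each stage. Everything else is routine greedy coloring, and the argument simultaneously yields a linear-time ordering-based algorithm (peel off minimum-degree vertices, then color in reverse order), though that refinement is not needed for the statement as given.
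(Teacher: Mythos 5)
Your proof is correct: the induction on the number of vertices, the heredity of $r$-degeneracy under taking induced subgraphs, and the greedy extension to the deleted low-degree vertex are exactly the standard argument. The paper states this claim without proof (treating it as an easy, well-known fact), so your write-up simply supplies the folklore proof the authors implicitly rely on.
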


\mn We shall use Claim~\ref{Cl:r-degenerate} for graphs that can be represented as a union of planar graphs, via the following claim.
\begin{claim}\label{Cl:union-of-planar}
Let $G=(V,E)$ be a union of $s$ planar graphs on vertex set $V$ (that is, $G=(V,E_1 \cup E_2 \cup \ldots \cup E_s)$), where for each $i$, the graph $(V,E_i)$
is planar. Then $G$ is $6s$-colorable.
\end{claim}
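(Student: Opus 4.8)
The plan is to deduce the coloring bound from degeneracy: I would show that $G$ is $(6s-1)$-degenerate and then invoke Claim~\ref{Cl:r-degenerate}, which with $r = 6s-1$ gives exactly $6s$-colorability. The only external ingredient needed is the classical consequence of Euler's formula, that any planar graph on $w \geq 3$ vertices has at most $3w - 6$ edges.

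First I would fix an arbitrary induced subgraph $H$ of $G$ on a vertex set $W$ with $|W| = w$, with the goal of exhibiting a vertex of degree at most $6s-1$ in $H$. The key observation is that $H$ is again a union of $s$ planar graphs: since $H$ is induced, its edge set is $\{e \in E : e \subseteq W\} = \bigcup_{i=1}^{s} \{e \in E_i : e \subseteq W\}$, and for each $i$ the set $\{e \in E_i : e \subseteq W\}$ is the edge set of the subgraph of the planar graph $(V, E_i)$ induced on $W$, which is itself planar (subgraphs of planar graphs are planar). Hence, when $w \geq 3$, each of these $s$ restrictions has at most $3w - 6$ edges, and therefore $|E(H)| \leq s(3w - 6) = 3sw - 6s$.

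Next I would convert this edge bound into a minimum-degree bound via the handshake identity: the average degree in $H$ is $\frac{2|E(H)|}{w} \leq 6s - \frac{12s}{w} < 6s$. Since degrees are integers, some vertex of $H$ has degree at most $6s - 1$. In the few remaining cases with $w \leq 6s$ (in particular $w < 3$, where the $3w - 6$ bound does not apply), every vertex trivially has degree at most $w - 1 \leq 6s - 1$, so the desired vertex exists there as well. As $H$ ranged over all induced subgraphs, this shows $G$ is $(6s-1)$-degenerate, and Claim~\ref{Cl:r-degenerate} then yields that $G$ is $6s$-colorable.

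The argument is essentially routine, and I do not expect a genuine obstacle. The only points requiring mild care are (i) recognizing that one must apply planarity to the \emph{restricted} graphs $\{e \in E_i : e \subseteq W\}$ rather than to the full graphs $(V, E_i)$, and (ii) isolating the small cases $w < 3$ where the edge bound degenerates; both are handled cleanly by phrasing the whole argument through degeneracy rather than attempting a direct coloring.
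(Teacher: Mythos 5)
Your proof is correct and follows essentially the same route as the paper: bound the edge count of every induced subgraph by $s(3w-6)$ via Euler's formula applied to each planar piece, deduce $(6s-1)$-degeneracy from the average degree, and conclude via Claim~\ref{Cl:r-degenerate}. Your treatment of the restricted graphs and the small cases $w<3$ is slightly more careful than the paper's, but the argument is the same.
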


\begin{proof}
As each $G_i=(V,E_i)$ is a planar graph, it follows from Euler's formula that $|E_i| \leq 3n-6$ for all $i$, where $n=|V|$. Hence,
$\cardin{E} \leq 3ns-6s$, and therefore, the average degree of $G$ is at most $6s-\frac{12s}{n}$. This implies that there exists a vertex in $V$ whose degree is at most $6s-1$. Since planar graphs are closed under vertex removal, the same holds for any induced subgraph of $G$, and thus, $G$ is $(6s-1)$-degenerate. Therefore, by Claim~\ref{Cl:r-degenerate}, $G$ is $6s$-colorable, as asserted.
\end{proof}

\mn \textbf{A planarity lemma.}
We shall use another lemma, whose proof is a standard planarity argument, of a type that appears many times in the context of string graphs (see, e.g., ~\cite{LW14,MG00,RW17}).
\begin{lemma}\label{lem:prelim-strings-planar}
Let $S_1$ and $S_2$ be two sets of pairwise disjoint strings (i.e., the strings in $S_1$ are pairwise disjoint, and the same is true for $S_2$). Assume that each string in $S_2$ crosses exactly two strings of $S_1$. Let $G=(S_1,E)$ be the graph whose edges are all pairs $a,b \in S_1$ that are crossed by the same string $x \in S_2$. Then $G$ is planar.
\end{lemma}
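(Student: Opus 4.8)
The plan is to realize $G$ by an explicit planar drawing that uses the strings of $S_2$ as the skeleton of the edges and collapses each string of $S_1$ to a vertex. First I would replace each $x \in S_2$ by a single clean connecting arc. Since $x$ crosses exactly the two strings $a,b \in S_1$ and, being a member of $S_2$, is disjoint from every other string of $S_2$, the only members of $S_1 \cup S_2$ that $x$ meets are $a$ and $b$. Travelling along $x$ one encounters a finite sequence of crossing points, each lying on $a$ or on $b$; as the label changes from the first maximal run on one string to the first point on the other, we obtain two consecutive crossing points $p \in a$ and $q \in b$ so that the sub-arc $e_x \subseteq x$ between $p$ and $q$ has relative interior disjoint from $\bigcup S_1$. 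Thus $e_x$ is an arc joining $a$ to $b$ whose interior avoids all of $S_1$. The key point is that for distinct $x,x' \in S_2$ we have $x \cap x' = \emptyset$, hence $e_x \cap e_{x'} = \emptyset$; in particular the arcs, and even their endpoints on the strings of $S_1$, are pairwise disjoint.

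Next I would contract each string of $S_1$ to a vertex. Assuming, as is standard in this setting, that each string is a simple Jordan arc, I replace each $a \in S_1$ by a small closed topological disk $D_a$ that is a neighborhood of $a$. Because the strings of $S_1$ are compact and pairwise disjoint, the $D_a$ can be chosen pairwise disjoint; and because the relative interior of each $e_x$ is a compact set at positive distance from $\bigcup S_1$, the disks can be shrunk enough that every $e_x$ meets $D_a \cup D_b$ only in two short end-segments containing its endpoints. Truncating $e_x$ at its last crossings with $\partial D_a$ and $\partial D_b$ yields an arc $e'_x$ from a point of $\partial D_a$ to a point of $\partial D_b$ whose interior is disjoint from every disk. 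By the disjointness established above, the $e'_x$ are pairwise disjoint and the endpoints they induce (the ``ports'') on each $\partial D_a$ are distinct points. Finally I would assemble the drawing: place a vertex $v_a$ in the interior of $D_a$ and, inside $D_a$, join $v_a$ to each of its ports by pairwise non-crossing arcs in spoke-like fashion, which is possible precisely because $D_a$ is a disk. Concatenating a spoke in $D_a$, the exterior arc $e'_x$, and a spoke in $D_b$ produces a curve from $v_a$ to $v_b$ realizing the edge contributed by $x$. Two such curves never cross: outside the disks they follow the disjoint arcs $e'_x$, and inside any disk they follow non-crossing spokes. Since each $x \in S_2$ crosses \emph{exactly} two strings, every edge of $G$ arises from some $x$ (and parallel arcs, should they occur, only help), so we obtain a planar drawing of $G$.

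The step I expect to be the main obstacle is the second one: legitimately passing from ``pairwise disjoint arcs joining pairwise disjoint strings in the plane'' to ``a planar graph after collapsing each string to a single point.'' This is exactly where the hypothesis that each string is a simple Jordan arc enters, since only then is a thin neighborhood $D_a$ a disk in which a central vertex can be joined to arbitrarily many boundary ports without crossings; and it is where one must take care that all connecting arcs can be pulled back to the disk boundaries simultaneously (the uniform ``small enough disk'' choice). The first step is what makes this contraction legal in the first place: extracting, for each $x$, a single sub-arc $e_x$ with clean endpoints on two distinct strings—using both that $x$ crosses exactly two strings of $S_1$ and that the strings of $S_2$ are pairwise disjoint—guarantees that every edge of $G$ is represented by one arc whose interior and endpoints are free of all other strings.
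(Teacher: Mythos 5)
Your proof is correct in substance and shares the paper's central idea---each $x\in S_2$ donates one connecting arc between the two strings of $S_1$ it crosses, and the pairwise disjointness within $S_1$ and within $S_2$ is what forces planarity---but the final step is executed quite differently. The paper picks a representative point $x_s$ on each string of $S_1$, draws the edge coming from $x$ as a three-part curve running along $s_1$, then along $x$, then along $s'_1$, and then verifies only that \emph{vertex-disjoint} edges have disjoint drawings, invoking the Hanani--Tutte theorem to conclude planarity (this is needed because two edges incident to the same vertex travel along the same string of $S_1$ and may cross each other in that drawing). You instead build an honest crossing-free embedding: extract a clean sub-arc $e_x$ of each $x$, thicken each string of $S_1$ to a disk, truncate the arcs at the disk boundaries, and join a central vertex to the resulting ports by spokes. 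Your route is more elementary in that it needs no independent-crossing criterion, at the cost of more point-set topology (tameness of the arcs so that a thin neighborhood is a disk, and a uniform choice of disk radii). One sentence of yours is literally false and should be repaired: the relative interior of $e_x$ is neither compact nor at positive distance from $\bigcup S_1$, since its closure contains the endpoints $p\in a$ and $q\in b$. What you actually need, and what does hold, is that the closed arc $e_x$ is disjoint from every string $c\in S_1\setminus\{a,b\}$ (because $x$ crosses exactly $a$ and $b$), hence at positive distance from each such $c$, so the disks $D_c$ can be shrunk to avoid all the arcs; the intersections of $e_x$ with $D_a$ and $D_b$ are then handled by your truncation at the last exit from $D_a$ preceding the first entry into $D_b$. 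With that correction the argument goes through.
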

\noindent For the sake of completeness we provide the proof.

\begin{proof}
We present a planar drawing of the graph $G$. For any string $s \in S_1$, we choose a point $x_s \in s$. These points represent the vertices of $G$.
For each edge $e=(s_1,s'_1) \in E$, we define the planar drawing of $e$ as follows. Since $e \in E$, there exists $s_2 \in S_2$ that crosses
$s_1$ at the point $x_1$ and $s'_1$ at the point $x_2$ (see Figure~\ref{fig:string-planar}).

\begin{figure}[tb]
\begin{center}
\scalebox{0.6}{
\includegraphics{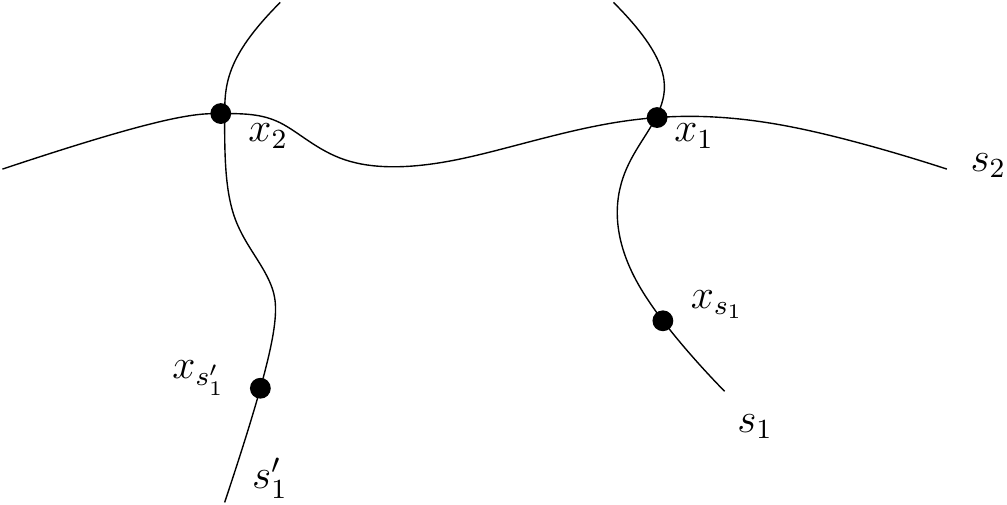}
} \caption{Illustration for the proof of Lemma~\ref{lem:prelim-strings-planar}} \label{fig:string-planar}
\end{center}
\end{figure}

The planar representation of the edge $(s_1,s'_1)$ consists of the union of three curves: The first one connects $x_{s_1}$ to $x_1$  on the curve $s_1$, the second one connects $x_1$ to $x_2$ on $s_2$, and the third one connects $x_2$ to $x_{s'_1}$ on $s'_1$. To avoid partially overlapping edges, we slightly perturb each edge without changing its endpoints, in a way that does not create new crossings.

To prove that $G$ is planar, it is sufficient to show that for any two vertex-disjoint edges $(s_1, s'_1), (t_1, t'_1) \in E$, the planar drawing of
$(s_1,s'_1)$ is disjoint from the planar drawing of $(t_1,t'_1)$. (The fact that this condition is sufficient follows, e.g., from the well-known
Hanani-Tutte Theorem~\cite{Cho34,Tut70}.)

Since the strings in $S_1$ are pairwise disjoint, the first and the third parts of the planar drawing of $(s_1,s'_1)$ are disjoint with the
first and the third parts of the planar drawing of $(t_1,t'_1)$. Similarly, the second part of the planar drawing of $(s_1,s'_1)$ is disjoint with the
second part of the planar drawing of $(t_1,t'_1)$, as the strings in $S_2$ are pairwise disjoint. Moreover, since we assumed that any string in $S_2$ crosses exactly two strings in $S_1$, the second part of the planar drawing of $(s_1,s'_1)$ is disjoint with the first and the third parts of $(t_1,t'_1)$ and vice versa. This completes the proof.
\end{proof}

\section{CF-coloring of circle graphs and their generalizations}
\label{sec:main}
\subsection{CF-coloring of circle graphs}
\label{sec:circle_graphs}


Combinatorial and coloring properties of circle graphs, also known as interval overlap graphs, have been extensively studied (see e.g. \cite{Gy85,Ko97,Ko98,Ko01,Wa15}).
As mentioned in the Introduction, the first results of Gy\'arf\'as' in \cite{Gy85} regarding circle graphs, already imply that, despite the apparent simplicity of interval overlap graphs, some questions have turned out to be really hard to answer.

The main goal of this subsection is to prove Theorem~\ref{CFint}. Let us recall its statement:

\mn \textbf{Theorem~\ref{CFint}}.
Let $\famI(n)$ denote the maximum CF-chromatic number of a circle graph on at most $n$ vertices. Then
$\famI(n)= \Theta(\log n)$.


\medskip

First, we prove the lower bound of Theorem~\ref{CFint} in a more general context:

\begin{lemma}
\label{LBint}
Let $\famI^k(n)$ denote the largest $k$-CF chromatic number of a circle graph on $n$ vertices for fixed $k \geq 1$. Then $\famI^k(n)= \Omega(\log n)$.
\end{lemma}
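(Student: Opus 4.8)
The plan is to exhibit, for each fixed $k$, a circle graph on $N=\Theta(n)$ vertices whose neighborhood hypergraph contains the \emph{dyadic interval hypergraph} on $n=2^d$ base vertices as a subhypergraph, and then to show that this dyadic hypergraph already forces $\Omega(\log n)$ colors in any $k$-CF coloring. Since a circle graph is exactly the overlap graph of a family of intervals on a line, it suffices to describe a suitable family of intervals.

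First I would take $n=2^d$ pairwise-nested \emph{base intervals} arranged as a staircase, $b_i=[i,M]$ for $i=1,\ldots,n$ and a large $M>n$; being nested, the $b_i$ are pairwise non-adjacent in the overlap graph. For every dyadic block $[s,e]\subseteq[1,n]$ (an interval of the form $[(j-1)2^{t}+1,\,j2^{t}]$) I would add one \emph{probe} $q_{s,e}=[s-\tfrac14,\,e+\tfrac14]$. A direct check shows that $q_{s,e}$ overlaps $b_i$ (intersects without either containing the other) exactly when $s\le i\le e$: for $i<s$ one has $q_{s,e}\subseteq b_i$, for $i>e$ the two intervals are disjoint, and for $s\le i\le e$ they properly cross. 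Hence $N(q_{s,e})\cap\{b_1,\ldots,b_n\}=\{b_s,\ldots,b_e\}$. Moreover, since the dyadic blocks form a laminar family, any two probes are either nested or disjoint as intervals---the shift by $\tfrac14$ guarantees that the probes of two \emph{disjoint} blocks remain strictly disjoint even when the blocks are adjacent---so the probes are pairwise non-adjacent and no probe lies in the neighborhood of another. Therefore $N(q_{s,e})=\{b_s,\ldots,b_e\}$ exactly, and the neighborhood hypergraph of the resulting overlap graph contains precisely all dyadic blocks on the base vertices as hyperedges.

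Consequently, any $k$-CF coloring $c$ of this circle graph must $k$-CF color every hyperedge $N(q_{s,e})=\{b_s,\ldots,b_e\}$, so the restriction of $c$ to the base vertices is a $k$-CF coloring of the dyadic interval hypergraph on $n$ points. It then remains to show that the latter needs $\Omega(\log n)$ colors, which I would prove by a descent recursion. Let $h_k(d)$ be the minimum number of colors in a $k$-CF coloring of the dyadic hypergraph on $2^d$ points and set $\ell=\lceil\log_2(k+1)\rceil$. Given a coloring with $t$ colors, the root block carries a color $c^\ast$ occurring between $1$ and $k$ times; its at most $k$ occurrences meet at most $k$ of the $2^{\ell}\ge k+1$ pairwise-disjoint dyadic blocks at depth $\ell$, so some depth-$\ell$ block $B$ contains no occurrence of $c^\ast$. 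The sub-blocks inside $B$ form a dyadic hypergraph on $2^{d-\ell}$ points colored by $c$ using colors $\ne c^\ast$, whence $t-1\ge h_k(d-\ell)$. Iterating yields $h_k(d)\ge\lfloor d/\ell\rfloor+1=\Omega\!\left(\tfrac{\log n}{\log(k+1)}\right)$, which is $\Omega(\log n)$ for fixed $k$. As the construction uses $N=n+(2n-1)=\Theta(n)$ vertices, this gives $\famI^k(n)=\Omega(\log n)$.

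The delicate point is not the geometry but the fact that one cannot realize the \emph{full} interval hypergraph by pairwise non-adjacent probes: two probes cross exactly when their index ranges cross, so any non-laminar family of ranges would pollute the neighborhoods with other probe-vertices and break the reduction. This is what forces me to a laminar (dyadic) subfamily, and in turn why I cannot simply quote $\cf(I(n))=\lfloor\log n\rfloor+1$ from Proposition~\ref{prop:discrete}: I must reprove the lower bound for the dyadic hypergraph and, for general $k$, absorb the extra $\Theta(\log k)$ loss coming from the depth-$\ell$ descent, which is harmless for fixed $k$.
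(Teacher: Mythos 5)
Your proof is correct, but it takes a genuinely different route from the paper's. The paper also works with the interval-overlap representation, but its family consists of a complete binary tree $\famH$ of nested intervals of depth $k(t-1)+1$ together with a nested chain $\famM$ of $2^{k(t-1)}$ intervals threaded through the leaves; the lower bound is then extracted by an adversarial induction that walks down the tree, at each level choosing the child that avoids the previously selected witnesses and accumulating a set $\famD^i\subseteq\famM$ in which every color occurs at most $k$ times, so that after $k(t-1)+1$ steps the pigeonhole principle forces $t$ colors. You instead use a staircase of nested base intervals probed by a \emph{laminar} (dyadic) family, which cleanly reduces the problem to a $k$-CF lower bound for the dyadic interval hypergraph, proved by peeling off one color per $\lceil\log_2(k+1)\rceil$ levels of the dyadic tree. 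Your reduction is sound: the checks that each probe overlaps exactly the base intervals of its block, that the probes are pairwise non-adjacent (so no probe pollutes another's neighborhood), and that extra constraints from the base vertices' neighborhoods can only help a lower bound, are all correct, and the descent recursion $h_k(d)\ge\lfloor d/\ell\rfloor+1$ is valid. Your approach is more modular (it isolates a reusable hypergraph lower bound), avoids the paper's delicate bookkeeping of ``associated neighbors,'' and in fact gives a slightly better dependence on $k$ (a loss of $\log k$ rather than the paper's factor of $k$), though for fixed $k$ both yield the same $\Omega(\log n)$. Like the paper's example, your graph is bipartite (base vertices are pairwise nested, probes are pairwise nested or disjoint), so it would equally serve the later application in Proposition~\ref{Prop:Ext-thm-bounding-chromatic}. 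One cosmetic remark: your base intervals $[i,M]$ share a right endpoint; if one insists on distinct endpoints, replace them by $[i,\,M-i\varepsilon]$ so that they remain nested.
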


\begin{figure}[tb]
\label{IntF}
\begin{center}
\scalebox{0.6}{
\includegraphics{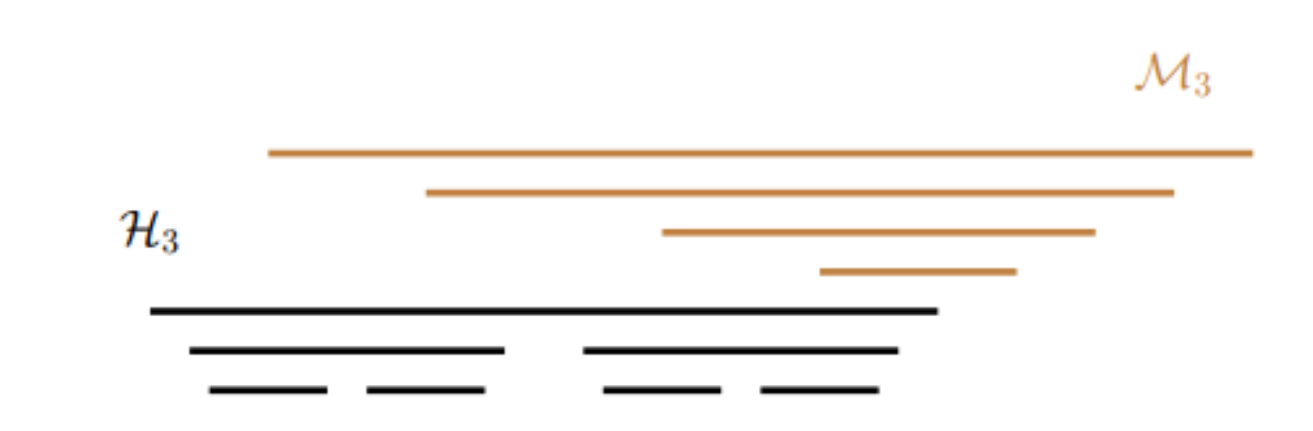}
} \caption{Representation of $\famF$ for $t=3$ and $k=1$.}
\end{center}
\end{figure}


\begin{proof}
For the sake of convenience, we use the representation of the circle graph as an interval overlap graph.
For $t\geq 1$ we construct a family $\famF$ of $n=2^{k(t-1)+1}+ 2^{k(t-1)}-1$ intervals such that any $k$-CF-coloring of the overlap graph of $\famF$  requires at least $t$ colors.  Thus, for a fixed $k$ we have $t=\Theta(\log n)$, and hence the CF-chromatic number of the overlap graph of $\famF$ is $\Omega(\log n)$.

The family  $\famF$ consists of two independent subfamilies $\famH_{k(t-1)+1}$  and  $\famM_{k(t-1)+1}$. The family $\famH_i$ is defined recursively. $\famH_1$ consists of an interval. To define $\famH_{i+1}$, we take $\famH_i$ and add to each minimal interval (w.r.t. containment) $I$ of $\famH_i$ two disjoint intervals, referred to as the \emph{left} and \emph{right direct descendents} of $I$, contained in $I$. The minimal intervals of $\famH_i$ are referred to as \emph{intervals of level $i$}, and the intervals of $\famH_{k(t-1)+1}$ strictly contained in some interval $I$ of $\famH_{k(t-1)+1}$ are referred to as \emph{descendants} of $I$. The family $\famM_{k(t-1)+1}$ consists of $2^{k(t-1)}$ intervals $I_1 \supseteq I_2 \ldots \supseteq I_{2^{k(t-1)}}$ whose right endpoints are to the right of intervals of $\famH_{k(t-1)+1}$. Furthermore, if $J_1,\ldots, J_{2^{k(t-1)}}$  are the minimal  intervals  of $\famH_{k(t-1)+1}$ ordered from left to right, then the left endpoint of $I_i$ is inside $J_i$; see Figure~2.

It is straightforward to check the cardinality of $\famF$. We now argue why $t$ colors are necessary. Consider any CF-coloring of $\mathcal{F}$. For $1\leq i\leq k(t-1)+1$, we will recursively define $\famD^i\subset \famM_{k(t-1)+1}$ of cardinality $i$ in which each  color appears at most $k$ times. This will prove the lemma, since at least $t$ colors must appear in $\famD^{k(t-1)+1}$.

Since the inductive proof in the sequel is quite cumbersome, we first present it informally, and then provide the formalization.


First, we consider the unique interval $Y=Y_1$ at level 1 of $\famH_{k(t-1)+1}$. $Y_1$ has a neighbor whose color appears at most $k$ times in the neighborhood of $Y_1$. Assume, for the sake of this example, this is $I_3$ and denote $\famD^1=\{I_3\}$.

Then we proceed to level 2 and consider the direct descendent of $Y_1$ that does not overlap with $I_3$. By our assumption this is the left direct descendent of $Y_1$, and we denote it $Y_2$. In addition, in its neighborhood there exists a neighbor whose color appears at most $k$ times; assume now that this is $I_1$ and thus denote $\famD^2=\famD^1 \cup \{I_1\}$.

In the third step, we proceed to level 3 and consider the direct descendent of $Y_2$ that does not overlap with $I_1$. By our assumption, this is the right direct descendent of $Y_2$, and we denote it $Y_3$. In its neighborhood there exists a neighbor whose color appears at most $k$ times; assume now that this is $I_2$ and thus denote $\famD^3=\famD^2 \cup \{I_2\}$.
We continue in this way $k(t-1)+1$ times.

When we finish, in any $\famD^i$ any color appears at most $k$ times. Indeed, assume, to the contrary, that there exists a color $C$ that appears more than $k$ times, and suppose that the first time we encountered it in the inductive process was in $I_{j_0}$. But note that for any $j>j_0$, $I_j \in \famD^i$ is also a neighbor of $Y_{j_0}$; thus, when during the inductive process we were in level $j_0$ and considered $Y_{j_0}$, the color $C$ appeared more than $k$ times in the neighborhood of $Y_{j_0}$, in contrast to the definition of $I_{j_0}$.


 We now present the formal proof. For inductive purposes, the unique interval $I$ of  $\famD^i\setminus\famD^{i-1}$ has an associated neighbor, denoted by $I_A$, of level $i$, which is not adjacent to any intervals of $\famD^{i-1}$ in the overlap graph of $\famF$. Furthermore, since by construction the associated neighbor of level $i+1$ will be contained in the one of level $i$, the associated neighbors form a chain according to the inclusion relation. $\famD^0= \emptyset$ and for $i=1$ we set $\famD^1$ to be any interval $W$ adjacent to the unique interval $Y$ of $\famH_1$  whose color appears at least once and at most $k$ times in $N(Y)$. This interval $Y$ is defined to be $W_A$. Assume that we defined $\famD^i$ for some $1\leq i\leq k(t-1)$ along with  associated neighbors of intervals contained there. Let $I$  be the unique interval of  $\famD^i\setminus\famD^{i-1}$. Define $J$ to be the unique direct descendent  of $I_A$, which does not contain the left endpoint of $I$. The interval $J$ is well defined, since only one of the $2$ disjoint direct descendants of $I_A$ contains the left endpoint of $I$. This actually means that $J$ does not overlap with  $I$, because the right endpoint of $I$ is also in the exterior of $J$. The latter is implied by definition of the family  $\famM_{k(t-1)+1}$ to which $I$ belongs. Since $I_A$ does not overlap with intervals of $\famD^{i-1}$, which is a basic property of associated neighbors that holds by induction, the interval $J\subset I_A$ does not overlap with them as well. Therefore, since $J$ also does not overlap with $I$,  it does not overlap with any interval of $\famD^i$. Let $V$ be a neighbor of $J$  whose color  $c$ appears at least once and at most $k$ times in $N(J)$. If we prove that the color $c$  appears among intervals of $\famD^i$ at most $k-1$ times, then defining $\famD^{i+1}=\famD^i\cup \{V\}$ and setting $V_A=J$ we complete the induction step.

Let us prove that $c$ appears at most $k$-$1$ times among the intervals of $\famD^i$. By contradiction, assume $c$  appears in $\famD^i$ at least $k$ times. Let $U\in \famD^i$ be the first interval of color $c$ included in  $\famD^i$. If $U'$ is another interval of $\famD^i$ also colored by  $c$, then $U'$ also overlap $U_A$. This is true, because $U'_A\subset U_A$, implying that the left endpoint of $U'$ is in $U_A$ while the right one is in the exterior of $U_A$. The latter is true for all intervals of the set $\famM_t$ to which $U'$ belongs. Thus, if we prove that $V$ overlaps with $U_A$ and is distinct from the other intervals of $\famD^i$, we obtain a contradiction to the assumption that the color $c$ appears at most $k$ times in the neighborhood of $U_A$. As showed above, there is no member of $\famD^i$ that overlaps with $J$; hence, in particular,  $J$ cannot overlap with intervals of $\famD^i$ whose color is $c$. Since $V$ overlaps with  $J$, obviously, $V\notin \famD^i$. Furthermore, by construction, $J$ is a descendent of $U_A$, because $J\subset I_A\subset U_A$. Thus, $V$ has its left endpoint in $U_A$ and its right endpoint outside $U_A$. Again, the latter is true for all intervals of the set $\famM_t$ to which $V$ belongs. This contradicts the fact that $c$ appears at most $k$ times in the neighborhood of $U_A$, so we can conclude.
\end{proof}

\begin{remark}
Note that Theorem~\ref{CFint} together with Lemma~\ref{LBint} imply for any fixed $k \geq 1$ that $\famI^k(n)= \Theta( \log n)$. Hence the problem of $k$-CF coloring of circle graphs is asymptotically solved.
\end{remark}

\begin{proof}[Proof of Theorem \ref{CFint}]
The lower bound follows from Lemma~\ref{LBint}. To prove the upper bound, it is more convenient to use the representation of the circle graph as the overlap graph of a family $\famI$ of $n$ intervals. By a small perturbation, we can assume that the intervals in $\famI$ have distinct endpoints.
In addition, we can assume that the graph is connected, as we can apply the argument presented below to each connected component of the graph separately.

We partition the family $\famI$ into layers that will be colored separately.
Set $I$ to be the interval of $\famI$ with the leftmost left endpoint. Let $\famS_i$ be the set of intervals of $\famI$ at distance exactly $i$ from $I$. (The distance, as well as all the neighborhoods mentioned in this section, is the shortest path distance defined w.r.t. the corresponding interval overlap graph, and not w.r.t the intersection graph.) Thus, $\famI$ is partitioned into disjoint `distance layer sets' $\famI=\bigcup_{i=0}^s\famS_i$.


For any $\tilde{I}\in\famS_i$, let $N(\tilde{I})$ denote the neighborhood of $\tilde{I}$, and let $N_{\famS_j}(\tilde{I})=N(\tilde{I}) \cap \famS_j$. Note that by  definition of the layer sets, for any $1 \leq i \leq s$, each $\tilde{I}\in\famS_i$ has a neighbor in $\famS_{i-1}$, and so $N_{\famS_{i-1}}(\tilde{I}) \neq \emptyset$ and $N_{\famS_{j}}(\tilde{I}) = \emptyset$ for all $j \neq i-1,i,i+1$.

Our proof consists of two stages:

\medskip \noindent \textbf{Stage~1: Satisfying each $\famS_{i}$ by coloring $\famS_{i-1}$.} We prove the following:
\begin{proposition}\label{Prop:Feeding-S_i}
For each $1 \leq i \leq s$, the intervals of $\famS_{i-1}$ can be colored with $O(\log n)$ colors such that each $\tilde{I}\in\famS_i$ has a neighbor in $N_{\famS_{i-1}}(\tilde{I})$  whose color appears in $N_{\famS_{i-1}}(\tilde{I})$ exactly once.
\end{proposition}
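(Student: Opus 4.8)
The plan is to reduce the coloring of $\famS_{i-1}$ to a conflict-free coloring of a discrete interval hypergraph, for which Proposition~\ref{prop:discrete} already supplies $O(\log n)$ colors. Throughout I work in the interval representation, writing each interval as $J=(l_J,r_J)$ with distinct endpoints, and I use that two intervals are adjacent in the overlap graph iff exactly one endpoint of one lies strictly inside the other.

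The structural heart of the argument is the following containment lemma: \emph{no interval of $\famS_{i-1}$ is strictly contained in an interval of $\famS_i$}. To see this, suppose $J\in\famS_{i-1}$ with $J\subsetneq \tilde I=(l,r)\in\famS_i$, and fix a shortest path $J=K_0,K_1,\dots,K_{i-1}=I$ in the overlap graph, so that $K_t\in\famS_{i-1-t}$ and consecutive intervals overlap. Since $I$ has the leftmost left endpoint, $I$ cannot be contained in $\tilde I$, so $K_{i-1}\not\subset\tilde I$ while $K_0\subset\tilde I$; let $t^\star\ge 1$ be the least index with $K_{t^\star}\not\subset\tilde I$. Because $K_{t^\star}$ overlaps $K_{t^\star-1}\subset\tilde I$, it has an endpoint strictly inside $\tilde I$, and being not contained in $\tilde I$ it also has an endpoint strictly outside; hence $K_{t^\star}$ overlaps $\tilde I$. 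But then $\tilde I$ is adjacent to $K_{t^\star}\in\famS_{i-1-t^\star}$, forcing $d(I,\tilde I)\le i-t^\star<i$, a contradiction.

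Granting the lemma, for every $\tilde I=(l,r)\in\famS_i$ each $J\in\famS_{i-1}$ has at most one endpoint in the open interval $(l,r)$, so $J$ is a neighbor of $\tilde I$ precisely when it has exactly one endpoint there. This splits $N_{\famS_{i-1}}(\tilde I)$ into the \emph{left-neighbors} (those with $r_J\in(l,r)$) and the \emph{right-neighbors} (those with $l_J\in(l,r)$). Mapping each $J\in\famS_{i-1}$ to the point $r_J$, the left-neighbor sets become exactly the sets of points falling in the intervals $(l,r)$, i.e.\ a discrete interval hypergraph on at most $n$ points; the same holds for the right-neighbors via the map $J\mapsto l_J$. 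By Proposition~\ref{prop:discrete}, each of these two hypergraphs admits a CF-coloring with $O(\log n)$ colors.

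The step I expect to be the main obstacle is merging these two colorings into a single coloring of $\famS_{i-1}$ using only $O(\log n)$ colors: taking the product of the two colorings would CF-color the union but costs $O(\log^2 n)$, and neither one-sided coloring works alone, since the color witnessing uniqueness among the left-neighbors of $\tilde I$ may be repeated among its right-neighbors (and vice versa). The cleanest way around this should again be the containment lemma. Regarding the $2\cardin{\famS_{i-1}}$ endpoints of the intervals of $\famS_{i-1}$ as points on the line, the lemma says that each $\tilde I=(l,r)$ captures exactly one endpoint of each of its neighbors, so $N_{\famS_{i-1}}(\tilde I)$ is in bijection with the endpoints lying in $(l,r)$; the remaining task is to transfer a CF-coloring of this discrete interval hypergraph of endpoints to a single color per interval so that the witness survives no matter which of its two endpoints is the relevant one for a given query. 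I expect this to be achievable either by proving that the combined neighborhood hypergraph itself has the consecutive-ones property (and is therefore a genuine interval hypergraph), or by a direct recursive coloring that repeatedly assigns the current top color to a set of intervals that every query meets at most once.
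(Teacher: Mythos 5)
Your reduction is correct as far as it goes: the containment lemma (no interval of $\famS_{i-1}$ is strictly contained in an interval of $\famS_i$) is a clean variant of the paper's Claim~\ref{tzi}, and, granting it, the split of $N_{\famS_{i-1}}(\tilde{I})$ into left-neighbors and right-neighbors, each forming a sub-hypergraph of a discrete interval hypergraph that Proposition~\ref{prop:discrete} CF-colors with $O(\log n)$ colors, is sound. However, the step you defer --- merging the two one-sided colorings into a single $O(\log n)$-color scheme --- is not a technicality to be filled in later: it is where essentially all of the work in the paper's proof lies, and neither of your two proposed completions is established. The consecutive-ones route is doubtful: the problematic hyperedges are those $\tilde{I}\in\famS_i$ with exactly one left-neighbor $U$ and one right-neighbor $V$, which form two-element hyperedges $\{U,V\}$; for the hypergraph to be an interval hypergraph, each $U$ would have to occur in at most two such pairs (and these would have to be globally consistent with all the larger hyperedges). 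The paper only manages to bound the degree of this ``conflict graph'' by $4$ (Lemma~\ref{lem:max-degree-of-G_3}), and even that requires a non-trivial geometric argument built on Claim~\ref{tzi} and a non-nesting property; nothing in your sketch rules out a vertex with three distinct partners, which already kills the consecutive-ones property. Your second route, a recursive top-color argument, is exactly the framework of Lemma~\ref{lem:weakToKCF}, and to invoke it you must first show that the hypergraph arising from $\famS_{i-1}\times\famS_i$ \emph{hereditarily} admits a proper coloring with $O(1)$ colors --- which is the statement you have not proved.

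That missing constant-color proper coloring is the heart of the paper's argument: it transforms the intervals into grounded L-shapes, defines three auxiliary graphs whose union meets every hyperedge of size $\geq 2$ --- consecutive pairs of left-neighbors, consecutive pairs of right-neighbors (both planar by Lemma~\ref{lem:prelim-strings-planar}), and the left/right pairs described above (maximum degree $\leq 4$) --- and deduces that the union is $16$-colorable, after which Lemma~\ref{lem:weakToKCF} yields $O(\log n)$. Note that the left/right pairs are invisible to each of your one-sided interval hypergraphs separately: a color that witnesses uniqueness among the left-neighbors of $\tilde{I}$ may be reused by its single right-neighbor, which is precisely the failure mode you acknowledge. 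So the proposal contains a correct and nicely streamlined first half, but a genuine gap at its self-identified crux.
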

As the proof of the proposition is somewhat complex, we present it after the presentation of the second stage.

\medskip \noindent \textbf{Stage~2: CF-Coloring of the whole $\famS$ with $O(\log n)$ colors.} Naively, Stage~1 can be used to obtain a CF-coloring of the overlap graph of $\famS$ with $O(s \log n)$ colors by coloring each layer set with a different palette of colors, and adding one color to supply a uniquely colored neighbor to $I$. However, this is not an efficient bound, since $s$ might be large. We now show that $O(\log n)$ colors are sufficient.

Let $E_0$, $E_1$, and $E_2$ be three disjoint palettes of $O(\log n)$ colors and let $x$ be an additional color. We obtain a CF-coloring of the overlap graph of $\famI$ in the following way:
\begin{itemize}
\item[\textbf{i)}] We color the intervals of $\famS_s$ with an arbitrary color $c \in E_b$, where $b \equiv s(\mod 3)$.
\item[\textbf{ii)}] For each $1 \leq i \leq s$, we use Proposition~\ref{Prop:Feeding-S_i} to color the intervals of $\famS_{i-1}$ with the palette $E_j$, where $j \equiv i-1(\mod 3)$, such that each $\tilde{I}\in\famS_i$ has a uniquely colored neighbor in $N_{\famS_{i-1}}(\tilde{I})$.
\item[\textbf{iii)}] If after Step (ii), $I$ does not have a uniquely colored neighbor; we recolor one of its neighbors using one additional color $x$.
\end{itemize}
To see that this procedure indeed yields a CF-coloring of $\famI$, note that by the definition of the layers, for each $i \geq 1$, all the neighbors of each $\tilde{I} \in \famS_i$ belong to the sets $\famS_{i-1}, \famS_i$ and $\famS_{i+1}$. By Step (ii), $\tilde{I}$ has a uniquely colored neighbor in $\famS_{i-1}$. The sets $\famS_{i}$ and $\famS_{i+1}$ are colored using different palettes of colors, hence $\tilde{I}$ is guaranteed to have a uniquely colored neighbor. Step (iii) provides $I$ with a uniquely colored neighbor without destroying the uniquely colored neighbors of the other intervals, since the color $x$ is used in the coloring at most once. This completes the proof of the theorem.
\end{proof}

It is now left to present the proof of Proposition~\ref{Prop:Feeding-S_i}. We first present a skeleton of the proof and then fill in the required details.

\medskip \noindent \emph{Proof-skeleton of Proposition~\ref{Prop:Feeding-S_i}.}
Fix $1 \leq i \leq s$. Define $H=(V,\E)$ to be a hypergraph where $V=\famS_{i-1}$ and any $J \in \famS_i$ defines a hyperedge $e_J$ of all the intervals in $\famS_{i-1}$ that overlap $J$. To prove Proposition~\ref{Prop:Feeding-S_i}, we have to show that $H$ admits a CF-coloring with $O(\log n)$ colors. The proof consists of four steps.

\medskip \noindent \textbf{Step 1: Reduction to a proper coloring with a constant number of colors.} Recall that by Lemma~\ref{lem:weakToKCF}, to prove that $H$ admits a CF-coloring with $O(\log n)$ colors, it is sufficient to show that $H$, as well as any of its induced sub-hypergraphs, admit a proper coloring with a constant number of colors.

\medskip \noindent \textbf{Step 2: Transformation to L-shapes.} To obtain such a coloring, we first transform the elements of $\famI$ into grounded L-shapes, in a way that will help us to apply planarity arguments. We do so using the following slightly more general claim, which allows transforming any overlap graph of intervals into an intersection graph of L-shapes. (We also show in this claim that instead of L-shapes, one may obtain an intersection graph of frames. This will be useful in the sequel.)

\begin{claim}
\label{cl:olap_is_frames}
\begin{enumerate}
\item Any overlap graph of a family $\mathcal{W}$ of intervals is isomorphic to an intersection graph of a family $\mathcal{L}$ of grounded of L-shapes. Furthermore, there exists an isomorphism $\varphi: \mathcal{W} \rightarrow \mathcal{L}$ such that for any two overlapping intervals $[a,b]$ and $[c,d]$ where $a<c<b<d$, the vertical edge of $\varphi([c,d])$ intersects the horizontal edge of $\varphi([a,b])$. See figure 3.

\item Similarly, any interval overlap graph is isomorphic to an intersection graph of frames.
\end{enumerate}
\end{claim}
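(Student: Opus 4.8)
The plan is to give an explicit geometric realization and then verify the adjacency correspondence by a direct case analysis. First I would normalize the input: by a small perturbation assume that all $2|\mathcal{W}|$ endpoints are distinct, and sort the intervals by their right endpoints, letting $k(I)$ denote the rank of the right endpoint of $I$ (so $k$ ranges over $1,\dots,|\mathcal{W}|$). For part~1, I would send each interval $I=[\ell_I,r_I]$ to the grounded L-shape $\varphi(I)$ whose vertical segment is $\{\ell_I\}\times[0,k(I)]$ (rising from the ground line $y=0$ at the left endpoint) and whose horizontal segment is $[\ell_I,r_I]\times\{k(I)\}$. Thus intervals with larger right endpoints sit higher, and since every vertical segment meets the ground line $y=0$, the family is automatically grounded.

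The heart of the argument is to determine exactly when two such L-shapes meet. Take two intervals $I=[\ell_I,r_I]$ and $J=[\ell_J,r_J]$ with $k(I)<k(J)$, so $\varphi(I)$ is the lower shape. Because the endpoints are distinct the two vertical segments lie on distinct vertical lines and the two horizontal segments lie at distinct heights, and the horizontal of the higher shape (at height $k(J)$) is too high to meet the vertical of the lower shape (which reaches only $y=k(I)<k(J)$). Hence the only possible crossing is between the horizontal segment of $\varphi(I)$ and the vertical segment of $\varphi(J)$, and this occurs iff $\ell_J\in[\ell_I,r_I]$, i.e. iff $\ell_I<\ell_J<r_I$. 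Since $r_I<r_J$ by the ranking, the condition $\ell_I<\ell_J<r_I$ says precisely that the endpoints interleave as $\ell_I<\ell_J<r_I<r_J$, which is exactly the definition of $I$ and $J$ overlapping; when instead $\ell_J<\ell_I$ we have $I\subset J$ (nesting) and when $\ell_J>r_I$ the intervals are disjoint, and in both of these cases $\ell_J\notin(\ell_I,r_I)$ so no crossing occurs. This shows $\varphi$ is an isomorphism onto an intersection graph of grounded L-shapes, and it simultaneously yields the ``Furthermore'' clause: for overlapping $[a,b],[c,d]$ with $a<c<b<d$ we have $k([a,b])<k([c,d])$, so the realized crossing is the vertical segment of $\varphi([c,d])$ meeting the horizontal segment of $\varphi([a,b])$. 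I expect this case analysis — checking that the right-endpoint ordering rules out every spurious crossing (the nested and disjoint pairs) while forcing exactly the overlapping crossings — to be the main point requiring care.

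For part~2, I would reuse the same ranking but thicken each shape into a frame. I would send $I=[\ell_I,r_I]$ to the boundary of the axis-parallel rectangle $[\ell_I,r_I]\times[-k(I),k(I)]$, so that the $x$-range of the rectangle is the interval itself and its vertical extent is symmetric about the $x$-axis and \emph{strictly increasing} with the right-endpoint rank. Then for any two intervals with $k(I)<k(J)$ the vertical extents are strictly nested, $[-k(I),k(I)]\subset(-k(J),k(J))$, so whether the two rectangle boundaries cross is governed entirely by the $x$-ranges: if the intervals are disjoint the rectangles are disjoint and the frames do not meet; if $I\subset J$ (nesting, which given $r_I<r_J$ forces $\ell_J<\ell_I$) the rectangle of $I$ lies strictly inside that of $J$, so again the frames do not meet; and if the intervals overlap ($\ell_I<\ell_J<r_I<r_J$) then, since $\ell_J\in(\ell_I,r_I)$ and $k(J)>k(I)$, the left side of $\varphi(J)$ passes through both the top and the bottom edge of the shorter rectangle $\varphi(I)$, so the two frames cross. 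Hence the frames intersect iff the intervals overlap, completing the claim. The only subtlety here, analogous to part~1, is that the strictly increasing vertical extents must be chosen so that the $y$-ranges never interleave, which is what reduces the two-dimensional frame-crossing question to the one-dimensional overlap relation already analyzed.
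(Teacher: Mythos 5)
Your proof is correct and follows essentially the same construction as the paper: for part 1 you sort by right endpoints and hang an L-shape from each left endpoint at a depth equal to the right-endpoint rank (you reflect above the baseline rather than below, which is immaterial), and the case analysis ruling out nested and disjoint pairs is the same. The only difference is in part 2, where you use the right-endpoint rank as the frame's half-height while the paper uses the length of the longest containment chain ending at the interval; both choices make the $y$-extents nest consistently, so this is a cosmetic variation of the same argument.
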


\begin{figure}
\label{Int_Lsh}
\centering
  \begin{tikzpicture}[scale=0.8]

  \draw[line width=2pt] (-1,0) -- (9,0);

  \draw[line width=2pt, brown] (0,0.4) -- (3,0.4);
  \draw[line width=2pt, brown] (0,0) -- (0,-1) -- (3,-1);

  \draw[line width=2pt, brown] (2,0.8) -- (8,0.8);
  \draw[line width=2pt, brown] (2,0) -- (2,-3) -- (8,-3);

  \draw[line width=2pt, brown] (5,0.4) -- (7,0.4);
  \draw[line width=2pt, brown] (5,0) -- (5,-2) -- (7,-2);

   \node[thick] at (1.5,0.7) (nodeA) { $W_1$\normalsize};
   \node[thick] at (6,-0.4) (nodeB)  { $W_2$\normalsize};
   \node[thick] at (5,1.2) (nodeC)   {$W_3$\normalsize};

   \node[thick] at (1.2,-1.4) (nodeX) { $\phi(W_1)$\normalsize};
   \node[thick] at (6,-2.4) (nodeY) { $\phi(W_2)$\normalsize};
   \node[thick] at (5,-3.4) (nodeZ) {$\phi(W_3)$\normalsize};

  \end{tikzpicture}
    \caption{A collection $\famW=\{W_1, W_2, W_3\}$ of intervals and the corresponding collection  of grounded L-shapes $\phi(\famW)=\{\phi(W_1), \phi(W_2), \phi(W_3)\}$.}
\end{figure}
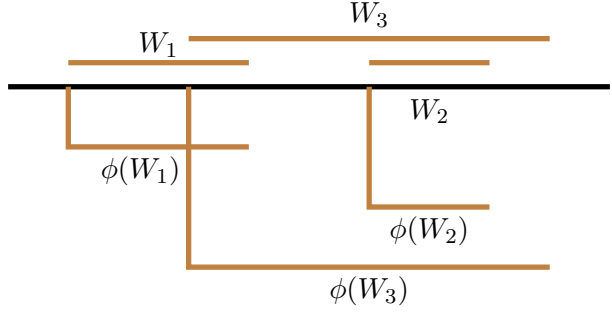

\begin{proof}[Proof of Claim~\ref{cl:olap_is_frames}.]
Let us sort the intervals  $W^1=[w^1_x,w^1_y], \ldots, W^r=[w^r_x,w^r_y]$ of  $\mathcal{W}$ according to their right endpoints, and turn each $W_i$ into an L-shape $\varphi(W_i)$, which is a union of the vertical segment $[(w^i_x,0),(w^i_x,-i)]$ and the horizontal segment $[(w^i_x,-i),(w^i_y,-i)]$.

If $W^i=[w^i_x,w^i_y]$ and $W^j=[w^j_x,w^j_y]$ overlap, then w.l.o.g. $w^i_x< w^j_x<  w^i_y<w^j_y$. This implies that the vertical part $[(w^j_x,0),(w^j_x,-j)]$ of $\varphi(W^j)$ intersects the horizontal part $[(w^i_x,-i),(w^i_y,-i)]$ of $\varphi(W^i)$, because $i <j$. If  $W^i$ and $W^j$ are nested or disjoint, a straightforward analysis implies the corresponding L-shapes $\varphi(W^i)$ and $\varphi(W^j)$ are disjoint.

Similarly, given a family  $\mathcal{W}$ of intervals, we turn each interval $I=[a,b] \in \mathcal{W}$ into a frame $[(a,-h(I)),(a,h(I))]\cup [(a,h(I)),(b,h(I))] \cup [b,h(I), (b,-h(I))]\cup [(b,-h(I)), (a,-h(I))]$, where $h(I)$ is the length of the largest chain according to the inclusion order whose maximal element is $I$. Clearly, the overlap graph of $\mathcal{W}$ is the intersection graph of the resulting family of frames. This completes the proof of Claim~\ref{cl:olap_is_frames}.
\end{proof}

We apply the transformation $\varphi$ to $\famS_{i-1} \cup \famS_{i}$ and denote by $\tilde{\famS}_{i-1}$ and $\tilde{\famS_i}$ the images of  $\famS_{i-1}$ and $\famS_i$, respectively. For each $W \in \famS_{i-1} \cup \famS_i$, we denote by $\tilde{W}$ the image of $W$, and by $h(\tilde{W})$ and $v(\tilde{W})$ the horizontal and the vertical parts of $\tilde{W}$, respectively.

\medskip \noindent \textbf{Step 3: Construction of three auxiliary graphs.} We define three auxiliary graphs $G_1,G_2,G_3$ on the vertex set $\famS_{i-1}$, in such a way that each hyperedge of $H$ of size $\geq 2$ is guaranteed to contain an edge of $G_1 \cup G_2 \cup G_3$. As a result, to prove that $H$ admits a proper coloring with a constant number of colors, it is sufficient to show that $\chi(G_1 \cup G_2 \cup G_3)$ is bounded by a constant.

The auxiliary graphs are defined as follows:
\begin{itemize}
\item For any $W_1,W_2 \in \famS_{i-1}$, $\{W_1,W_2\} \in E(G_1)$ iff there exists $\tilde{W} \in \tilde{\famS_{i}}$ such that $h(\tilde{W_1})$ and $h(\tilde{W_2})$ are consecutive along $v(\tilde{W})$; namely, such that no element of $\tilde{\famS}_{i-1}$ intersects $v(\tilde{W})$ between the points $\tilde{W} \cap \tilde{W_1}$ and $\tilde{W} \cap \tilde{W_2}$.
\item For any $W_1,W_2 \in \famS_{i-1}$, $\{W_1,W_2\} \in E(G_2)$ iff there exists $\tilde{W} \in \tilde{\famS_{i}}$ such that $v(\tilde{W_1})$ and $v(\tilde{W_2})$ are consecutive along $h(\tilde{W})$.
\item For any $W_1,W_2 \in \famS_{i-1}$, $\{W_1,W_2\} \in E(G_3)$ iff there exists $W \in \famS_{i}$ such that $N_{\famS_{i-1}}(W)=\{ W_1, W_2 \}$, $W_1$ is a left neighbor of $W$ (i.e., $W_1$ contains the left endpoint of $W$), and $W_2$ is a right neighbor of $W$.
\end{itemize}

Let $J \in \famS_i$, and let $e_J \in E(H)$ be the hyperedge of all the intervals in $\famS_{i-1}$ that overlap $J$. If $J$ has at least two left neighbors in $\famS_{i-1}$ then by the properties of the transformation $\varphi$, the set $\tilde{\famS}_{i-1}$ contains at least two L-shapes that intersect the vertical part of $\tilde{J}$. In particular, $\tilde{\famS}_{i-1}$ contains two such L-shapes that are \emph{consecutive} along $v(\tilde{J})$, and so $e_J$ contains an edge of the graph $G_1$. By a similar argument, if $J$ has at least two right neighbors in $\famS_{i-1}$, then $e_J$ contains an edge of $G_2$. Finally, if $J$ has exactly one left neighbor and exactly one right neighbor in $\famS_{i-1}$, then $e_J$ is an edge of $G_3$ (as this is how the edges of $G_3$ are defined). Hence, any hyperedge of $H$ of size $\geq 2$ does indeed contain an edge of $G=G_1 \cup G_2 \cup G_3$.

Furthermore, it is clear from the construction that to prove that some induced sub-hypergraph $\bar{H}$ admits a proper coloring with a constant number of colors, we can construct a corresponding auxiliary graph $\bar{G} = \bar{G_1} \cup \bar{G_2} \cup \bar{G_3}$, and show that $\chi(\bar{G})$ is bounded by a constant using the same argument as for $G$. Hence, we will be done once we show that $\chi(G)$ is bounded by a constant.

\medskip \noindent \textbf{Step 4: Proof that the auxiliary graph $G$ is 16-colorable.} This step consists of two lemmas whose proof is presented below.
\begin{lemma}\label{lem:G_1 and G_2 are planar}
The graphs $G_1$ and $G_2$ are planar.
\end{lemma}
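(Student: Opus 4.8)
The plan is to derive the planarity of both $G_1$ and $G_2$ from the planarity Lemma~\ref{lem:prelim-strings-planar}, applied to suitable sub-segments of the L-shapes produced by the transformation $\varphi$. Because the construction treats the horizontal and vertical coordinates symmetrically, it suffices to carry out the argument for $G_1$ in full detail and then obtain $G_2$ by exchanging the roles of the horizontal and vertical parts. For $G_1$ I would take $S_1 := \{ h(\tilde{W}) : W \in \famS_{i-1}\}$, the family of horizontal parts of the images of $\famS_{i-1}$. Recall that $\varphi$ places the horizontal part of each L-shape at a height equal to minus its rank in the sorting of $\famS_{i-1}\cup\famS_i$ by right endpoints; since the intervals have distinct endpoints, these ranks are distinct, so the members of $S_1$ lie on pairwise distinct horizontal lines and are therefore pairwise disjoint, as required.

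The family $S_2$ will consist of short sub-segments of the vertical parts $v(\tilde{W})$, $W \in \famS_i$: for each edge $\{W_1,W_2\}$ of $G_1$ witnessed by some $\tilde{W} \in \tilde{\famS}_i$ along whose vertical part $h(\tilde{W_1})$ and $h(\tilde{W_2})$ are consecutive, I take the portion of $v(\tilde{W})$ running between the two crossing points $v(\tilde{W})\cap h(\tilde{W_1})$ and $v(\tilde{W})\cap h(\tilde{W_2})$, extended infinitesimally past each end so that it crosses, rather than merely touches, both. There are then three hypotheses of Lemma~\ref{lem:prelim-strings-planar} to check. First, each such sub-segment crosses exactly two members of $S_1$: it crosses $h(\tilde{W_1})$ and $h(\tilde{W_2})$ by construction, and it crosses no other horizontal part of $\famS_{i-1}$ precisely because these two were chosen \emph{consecutive} along $v(\tilde{W})$, i.e. no element of $\tilde{\famS}_{i-1}$ meets $v(\tilde{W})$ strictly between them. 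Third (deferring the second), the graph on $S_1$ whose edges join pairs crossed by a common member of $S_2$ is, by the very definition of $G_1$, equal to $G_1$.

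It remains to verify the second hypothesis, that the members of $S_2$ are pairwise disjoint. Sub-segments coming from distinct $\tilde{W}$ lie on distinct vertical lines, since the vertical parts of $\famS_i$ sit at the pairwise-distinct $x$-coordinates given by the (distinct) left endpoints of the corresponding intervals; hence these are automatically disjoint. Two sub-segments coming from the \emph{same} $\tilde{W}$ can share only a crossing point on a horizontal segment they have in common, and this is removed by a standard small perturbation near that point, using that the ``consecutive'' condition leaves an open clear neighborhood of the shared crossing in which no other member of $S_1$ appears; the perturbation keeps each sub-segment crossing the same two horizontals and creates no new crossings. Lemma~\ref{lem:prelim-strings-planar} then yields the planarity of $G_1$. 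The mirror-image argument, taking $S_1$ to be the vertical parts $v(\tilde{W})$ of $\famS_{i-1}$ (which sit at distinct $x$-coordinates) and $S_2$ to be analogous sub-segments of the horizontal parts of $\famS_i$ (which sit at distinct heights), gives the planarity of $G_2$.

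I expect the only delicate point to be exactly this disjointness of $S_2$: the natural sub-segments of a single vertical part share endpoints at the horizontal segments they have in common, so the argument must justify that an arbitrarily small perturbation separates them without altering the crossing pattern with $S_1$. Everything else — the distinctness of heights and of $x$-coordinates, and the exactly-two-crossings property — is immediate from the explicit form of $\varphi$ in Claim~\ref{cl:olap_is_frames} and from the ``no element in between'' clause in the definitions of $G_1$ and $G_2$.
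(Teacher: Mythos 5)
Your proposal is correct and follows essentially the same route as the paper: both reduce the planarity of $G_1$ to Lemma~\ref{lem:prelim-strings-planar} by taking $S_1$ to be the horizontal parts $h(\tilde{W})$ for $W\in\famS_{i-1}$ and $S_2$ to be slightly perturbed sub-segments of the vertical parts $v(\tilde{W})$, $W\in\famS_i$, between consecutive crossings, and then handle $G_2$ symmetrically. Your write-up merely makes explicit the disjointness and exactly-two-crossings checks that the paper compresses into the phrase ``perform on these segments a small perturbation,'' which is a welcome but not substantively different elaboration.
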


\begin{lemma}\label{lem:max-degree-of-G_3}
The maximal degree of the graph $G_3$ satisfies $\Delta(G_3) \leq 4$.
\end{lemma}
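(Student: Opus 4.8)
The plan is to fix an arbitrary vertex $W_0=[p,q]\in\famS_{i-1}$ and show that it has at most $4$ neighbours in $G_3$. Every $G_3$-edge incident to $W_0$ is certified by a \emph{witness} $W\in\famS_i$ with $N_{\famS_{i-1}}(W)=\{W_0,R\}$, where $\{W_0,R\}$ consists of one left and one right neighbour of $W$; I call the edge of \emph{type (L)} if $W_0$ is the left neighbour of $W$, and of \emph{type (R)} otherwise. Reflecting the configuration about a vertical axis preserves the overlap graph, its shortest-path metric, and the layering (it merely replaces the root $I$ by the interval of rightmost right endpoint, which likewise cannot be strictly contained), while interchanging left and right neighbours. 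Hence it suffices to bound the number of type-(L) neighbours of $W_0$ by $2$; the same bound for type-(R) neighbours follows by this symmetry, and together they give $\Delta(G_3)\le 4$.

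The whole argument rests on one structural feature of the BFS layering, which is the step I expect to be the main obstacle: \emph{if $X\in\famS_l$, $Y\in\famS_m$ and $Y\supsetneq X$, then $m\le l$}; in particular a witness in $\famS_i$ can never contain an interval of $\famS_{i-1}$. This is precisely what rules out the naive bad configuration, in which one threads successive witnesses that each nest over the previously used right neighbour. I would prove it by walking a chain of ``feeders'' back to the root $I$, the interval with the globally leftmost left endpoint (so $I$ cannot be strictly contained in any interval). Assume $m>l$. Since $Y\supsetneq X$, we have $X\ne I$, so $l\ge 1$ and $X$ has a neighbour $Z\in\famS_{l-1}$. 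As $m-(l-1)\ge 2$, the intervals $Z$ and $Y$ are non-adjacent, i.e.\ $Z$ does not overlap $Y$; but $Z$ overlaps $X\subsetneq Y$, so $Z\cap Y\ne\emptyset$ and therefore one of $Z,Y$ contains the other. If $Y\subseteq Z$ then $Z\supseteq X$, contradicting that $Z$ overlaps $X$; hence $Z\subsetneq Y$. Iterating with $Z$ in place of $X$, the contained interval descends through $\famS_{l-1},\famS_{l-2},\dots$ while remaining strictly inside $Y$, until it becomes $I\subsetneq Y$, which is impossible. Thus $m\le l$.

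Granting the structural claim, counting type-(L) neighbours is quick. Any type-(L) witness $W=[a,b]$ satisfies $p<a<q<b$, hence contains the point $q$, and its right neighbour $R=[c,d]$ satisfies $a<c<b<d$. I classify $R$ as \emph{flavour A} if $c<q$ (then $R\ni q$ and $R$ overlaps $W_0$) and \emph{flavour B} if $c>q$ (then $R$ lies to the right of $q$). Suppose $R_1=[c_1,d_1]$ and $R_2=[c_2,d_2]$ are two distinct flavour-A right neighbours, with witnesses $W_1=[a_1,b_1]$ and $W_2=[a_2,b_2]$. Since $N_{\famS_{i-1}}(W_1)=\{W_0,R_1\}$ and $R_2\notin\{W_0,R_1\}$, $W_1$ does not overlap $R_2$; as both contain $q$ they are nested, and the structural claim forbids $W_1\supseteq R_2$, forcing $R_2\supseteq W_1$ and hence $c_2<a_1<c_1$. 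The symmetric argument gives $c_1<a_2<c_2$, a contradiction, so there is at most one flavour-A right neighbour. For two distinct flavour-B right neighbours $R_1,R_2$ with witnesses $W_1,W_2$, containment $R_j\supseteq W_k$ is impossible because $c_j>q>a_k$, while $W_k\supseteq R_j$ is forbidden by the structural claim; thus $W_1$ and $R_2$ (and likewise $W_2$ and $R_1$) are disjoint. Since $d_2>c_2>q>a_1$, disjointness forces $b_1<c_2$, and symmetrically $b_2<c_1$; together with $c_1<b_1$ and $c_2<b_2$ this yields $c_1<b_1<c_2<b_2<c_1$, a contradiction. Hence there is at most one flavour-B right neighbour, giving at most two type-(L) neighbours in all and completing the proof.
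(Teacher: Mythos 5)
Your proof is correct, and although it shares the paper's high-level strategy --- bound the number of $G_3$-neighbours on each side of $W_0$ by $2$, using a structural consequence of the BFS layering obtained by walking a shortest path back to the root $I$ --- the two key ingredients are genuinely different. The paper's structural lemma (Claim~\ref{tzi}) says that no interval of $\famS_{i-1}$ has both endpoints inside a single connected component of $\bigcup\famS_i$; it then needs the further auxiliary Claim~\ref{Cl:Aux-nested} (the chosen witnesses are pairwise non-nested) so that three same-side witnesses can be linearly ordered, and the contradiction comes from examining the second $\famS_{i-1}$-neighbour of the \emph{middle} witness. Your structural claim --- no interval of a deeper layer strictly contains an interval of a shallower layer --- is cleaner and renders the non-nestedness of witnesses unnecessary, and your two-flavour classification of the other endpoint of each edge (does it reach back over $q$, or lie strictly beyond it?) replaces the three-witness pigeonhole with two separate ``at most one of each flavour'' contradictions. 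Both arguments ultimately descend a chain of neighbours to $I$ and use only that $I$ cannot be strictly contained in another interval.

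One point deserves care: reflection does \emph{not} literally preserve the layering if the layering is re-derived by the paper's rule (BFS from the interval with the leftmost left endpoint), since the reflected family's canonical root would be the image of the rightmost-right-endpoint interval, not of $I$. Your parenthetical shows you see this, and the reduction is legitimate precisely because the only property of the root used anywhere (in the terminal step of your structural claim) is that it is not strictly contained in any other interval, which the reflected root also enjoys. Equivalently, you could skip the reflection and simply run the mirrored flavour argument for type-(R) edges with the same, left--right symmetric, structural claim; note that the paper's own ``without loss of generality'' at the corresponding point rests on the same observation.
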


The combination of Lemmas~\ref{lem:G_1 and G_2 are planar} and~\ref{lem:max-degree-of-G_3} implies immediately that $G$ is 16-colorable. Indeed, we show that it is $15$-degenerate. Let $G'$ be a subgraph of $G$ with, say, $i$ vertices. $G'$ is the union of graphs $G'_1 \subset G_1$, $G'_2 \subset G_2$, and $G'_3 \subset G_3$. By Euler's formula, the number of edges of $G'_1$ and of $G'_2$ is at most $3i-6$. Since the maximal degree of $G'_3$ is at most $4$, it has at most $2i$ edges. So altogether, the number of edges in $G'$ is at most $8i-12$, and thus, its average degree is at most $16-\frac{24}{i}$. Hence, there is at least one vertex with degree at most $15$.  By Claim~\ref{Cl:r-degenerate}, this implies that $G$ is 16-colorable.


This completes the skeleton of the proof of Proposition~\ref{Prop:Feeding-S_i}. It now remains only to present the proofs of Lemmas~\ref{lem:G_1 and G_2 are planar} and~\ref{lem:max-degree-of-G_3}.

\begin{proof}[Proof of Lemma~\ref{lem:G_1 and G_2 are planar}.]
We present the proof for $G_1$; the case of $G_2$ is identical. We derive the planarity of $G_1$ from Lemma~\ref{lem:prelim-strings-planar}. To this end, we set (in the notations of Lemma~\ref{lem:prelim-strings-planar}) $S_1=\{  h(\tilde{W}) | W \in \famS_{i-1} \}$. Then, for each $(W_1,W_2) \in E(G_1)$ we consider a segment of $v(\tilde{W})$ along which $h(\tilde{W_1})$ and $h(\tilde{W_2})$ are consecutive, and perform on these segments a small perturbation such that by setting $S_2$ (from Lemma~\ref{lem:prelim-strings-planar}) to be the obtained family of vertical segments, $S_1$ and $S_2$ satisfy the conditions of Lemma~\ref{lem:prelim-strings-planar}. It is clear that the graph whose planarity is asserted by Lemma~\ref{lem:prelim-strings-planar} is isomorphic to $G_1$.
\end{proof}

%
%

\noindent The proof of Lemma~\ref{lem:max-degree-of-G_3} is a bit more complex, and requires some preparations.

For each $1 \leq i \leq s$, the layer $\famS_i$ can be partitioned into subfamilies $\famF_1, \ldots, \famF_m$,  where the union of all intervals in each $\famF_j$  is an interval, and where the intervals $I_j=\bigcup_{J \in \famF_j}J$, $j=1,2,\ldots,m$, are pairwise disjoint. The following auxiliary claim is a variant of~\cite[Lemma~1]{Gy85}, although we use it in a different way and context.
\begin{claim}
\label{tzi}
For any interval $I_j$, each interval $T \in \famS_{i-1}$ has at most one endpoint in $I_j$.
\end{claim}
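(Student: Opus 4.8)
The plan is to prove the statement by contradiction, using the BFS layer structure rooted at the leftmost interval $I$. Suppose some $T=[t_1,t_2]\in\famS_{i-1}$ had both endpoints in $I_j=[a_j,b_j]$. Since all endpoints are distinct, this forces $a_j<t_1<t_2<b_j$, i.e.\ $T$ is strictly contained in the interval $I_j$. I would then reduce the claim to the following distance statement: \emph{any} interval contained in $I_j$ is at overlap-distance at least $i$ from $I$, which contradicts $d(I,T)=i-1$.

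To establish this, I would fix a shortest path $I=X_0,X_1,\ldots,X_m=T$ in the overlap graph, so that consecutive $X_k$ overlap and $m=i-1$. Because $I$ has the globally leftmost left endpoint and $a_j$ is the left endpoint of a layer-$i$ interval (hence not $I$), we have that $I$'s left endpoint is strictly left of $a_j$, so $X_0\not\subseteq I_j$, whereas $X_m=T\subseteq I_j$. The key step is to look at the \emph{last} index $q$ with $X_q\not\subseteq I_j$: then $q<m$, the interval $X_{q+1}\subseteq I_j$, and $X_q$ overlaps $X_{q+1}$. I would show that this ``entering'' interval $X_q$ must overlap some member of $\famF_j$.

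First I'd rule out that $X_q$ contains all of $I_j$: otherwise $X_q\supseteq I_j\supseteq X_{q+1}$, contradicting that consecutive path intervals overlap (neither contains the other). Hence $X_q$ has exactly one endpoint outside $[a_j,b_j]$; by the left–right symmetry of the configuration, assume its left endpoint $\ell<a_j$ while its right endpoint $r$ satisfies $a_j<r<b_j$, the last inequality precisely because $X_q\not\supseteq I_j$. Now $r$ is interior to $I_j$, so it is covered by some $J'\in\famF_j$, and by distinctness of endpoints $r$ lies in the interior of $J'$. Since $J'\subseteq I_j$ its left endpoint is $\ge a_j>\ell$ while its right endpoint exceeds $r$; combined with the shared point $r$, this gives that $X_q$ and $J'$ overlap. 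Finally, $J'\in\famS_i$ is adjacent to $X_q$, so $i=d(I,J')\le d(I,X_q)+1\le q+1$; but $q<m=i-1$ yields $q\le i-2$ and hence $i\le i-1$, a contradiction.

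The main obstacle is that the naive local argument fails: $T$ itself need not overlap any member of $\famF_j$, since it may be \emph{nested} inside a single clump interval, so no distance contradiction can be read off from $T$ directly. The fix built into the plan is to trace the shortest path from the leftmost interval and argue about the entering interval $X_q$ instead of $T$, where overlap with a clump member is forced by choosing $J'$ to be the interval covering $X_q$'s inner endpoint $r$; the only degenerate case, ``$X_q$ swallows the whole clump,'' is exactly the one eliminated by the overlap condition on the path, so the two points requiring care are (i) this swallowing case and (ii) verifying that the shared-endpoint configuration at $r$ really yields overlap rather than containment.
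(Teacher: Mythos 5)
Your proof is correct and takes essentially the same approach as the paper's: both trace a shortest path from the leftmost interval $I$ to $T$, locate the last interval on the path with an endpoint outside $I_j$, show it must overlap some member of $\famF_j\subseteq\famS_i$, and derive a contradiction with that member being at distance $i$ from $I$. Your write-up merely fills in details (the explicit choice of $J'$ covering the inner endpoint $r$ and the overlap-versus-containment check) that the paper leaves implicit.
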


\begin{proof}[Proof of Claim~\ref{tzi}.]
By contradiction, assume that some $T \in \famS_{i-1}$ has both endpoints in $I_j$. Let $P_0=I, \ldots , P_{i-1} =T$ be a shortest path from $I$ to $T$ in the overlap graph on $\famI$. The definition of $I$ implies that its left endpoint is outside $I_j$, while $T$ has both endpoints inside $I_j$. Hence, there exists an index $\ell \leq i-2$ such that $P_\ell$ has one endpoint inside $I_j$ and one endpoint in the exterior of $I_j$. (Indeed, it is impossible that for two overlapping intervals $a,b$, one lies entirely inside some interval $c$ while both endpoints of the other are not included in $c$. W.l.o.g. we may assume that $\ell$ is the maximal index for which $P_\ell$ has one endpoint outside $I_j$.) This implies that $P_\ell$ overlaps some interval $T' \in \famS_i$, and thus, there is a path of length at most $i-1$ from $I$ to $T'$, contradicting the fact that $T'$ is at distance $i$ from $I$.
\end{proof}

We construct a subfamily $\famF'$ of $\famS_i$ by choosing, for each edge $(W_1,W_2) \in E(G_3)$, a single interval $W \in \famS_i$ such that $W_1$ is its unique left neighbor and $W_2$ is its unique right neighbor. It is clear that such a choice is possible, by the definition of $G_3$. To prove Lemma~\ref{lem:max-degree-of-G_3}, we need the following easy claim:
\begin{claim}\label{Cl:Aux-nested}
No two intervals in $\famF'$ are nested.
\end{claim}

\begin{proof}[Proof of Claim~\ref{Cl:Aux-nested}.]
Assume, to the contrary, that for some $W,W' \in \famF'$, we have $W \subset W'$. Let $J^L$ and $J^R$ be the left and the right neighbors of $W$ in $\famS_{i-1}$ (and so, $W$ is the interval of $\famF'$ that corresponds to the edge $(J^L,J^R) \in E(G_3)$). If $I_b$ is the interval among $I_1,\ldots, I_m$ that contains $W$ and $W'$, then by Claim~\ref{tzi}, both $J^L$ and $J^R$ must have an endpoint outside $I_b$, and in particular, outside $W'$. Hence, $W'$ overlaps $J^L$ and $J^R$, contradicting the fact that for the edge $(J^L,J^R) \in E(G_3)$, only one interval of $\famF'$ overlaps both $J^L$ and $J^R$.
\end{proof}

We are now ready to present the proof of Lemma~\ref{lem:max-degree-of-G_3}.

\begin{proof}[Proof of Lemma~\ref{lem:max-degree-of-G_3}.]
Assume, to the contrary, that there exists $U \in \famS_{i-1}$ with $deg_{G_3}(U) \geq 5$. Without loss of generality, at least three intervals of $\famF'$ are right neighbors of $U$.

Let $J_1=[a_1,b_1], J_2=[a_2,b_2],$ and $J_3=[a_3,b_3]$ be three right neighbors of $U$ in $\famF'$ and let $I_v$ be the interval among $I_1,\ldots, I_m$ containing the right endpoint of $U$ (and hence also containing the intervals $J_1$, $J_2$, and $J_3$). As by Claim~\ref{Cl:Aux-nested}, no two intervals of $\famF'$ are nested, we can assume w.l.o.g. that $a_1  < a_2 < a_3 <b_1 <b_2 <b_3$.

Let $J^R_2$ be the unique right neighbor of $J_2$ in $\famS_{i-1}$. Then the right endpoint of $J^R_2$ is outside of $I_v$ (and, in particular, is on the right of $b_3$), while its left endpoint is included in either $]a_2, a_3 [$ or $]a_3, b_2 [$. We show that both cases are impossible.

\medskip \noindent \emph{Case 1: The left endpoint of $J^R_2$ is in $]a_2, a_3 [$.} In this case, $J^R_2$ is a right neighbor of $J_1$, and hence, both $J_1$ and $J_2$ have $U$ as a left neighbor and $J^R_2$ as a right neighbor. This is a contradiction, since by the definition of $\famF'$, for the edge $(U,J^R_2) \in E(G_3)$, only one interval of $\famF'$ overlaps both $U$ and $J^R_2$.

\medskip \noindent \emph{Case 2: The left endpoint of $J^R_2$ is in $]a_3, b_2 [$.} In this case, $J^R_2$ is a right neighbor of $J_3$, and hence, both $J_2$ and $J_3$ have $U$ as a left neighbor and $J^R_2$ as a right neighbor. This is a contradiction, as in Case 1.

\medskip \noindent This completes the proof of the lemma, and thus also of Proposition~\ref{Prop:Feeding-S_i} and of Theorem~\ref{CFint}.
\end{proof}

\paragraph{Algorithmic aspect.} By following the proof of Theorem~\ref{CFint}, it is easy to see that a coloring with $O(\log n)$ colors can be constructed in polynomial time. Indeed, by the structure of Stages~1 and~2, it is sufficient to show that the coloring of each layer provided in Proposition~\ref{Prop:Feeding-S_i} can be constructed in polynomial time. Considering the four steps of Proposition~\ref{Prop:Feeding-S_i}, we see that each of them can indeed be performed in polynomial time: the reduction of Step~1 can be performed in polynomial time as asserted in Lemma~\ref{lem:weakToKCF}; the transformation to L-shapes of Step~2 requires only ordering of the intervals (as seen in the proof of Claim~\ref{cl:olap_is_frames}); and the construction of the graphs in Step~3 can clearly be performed in polynomial time as can the coloring of the auxiliary graph in Step~4, as any $d$-degenerate graph can be $(d+1)$-colored in polynomial time by a recursive process.

\subsection{CF-coloring of the intersection graph of grounded L-shapes}
\label{sec:groundedLshapes}


\medskip \noindent Given a fixed horizontal line  $H$, referred to as the \emph{baseline}, an L-shape is said to be \emph{grounded} if it is contained in the lower halfplane delimited by $H$ and  has exactly one endpoint on $H$. Such an endpoint of an L-shape $\li$  is referred to as a \emph{basepoint}, and is denoted by $b(\li)$. A family of L-shapes is said to be \emph{grounded} if each member is grounded. The neighbors of $\li$ based to the left of it are called  \emph{left neighbors}. In particular, a left neighbor of $\li$ intersects $v(\li)$ - the vertical part of $\li$. Analogously, the neighbors of $\li$ based to the right of it are called  \emph{right neighbors}. In particular, a right neighbor of $\li$ intersects $h(\li)$. We assume that the L-shapes of a given family have distinct basepoints. Given a collection $\famF$ of L-shapes, $v(\famF)$ and $h(\famF)$ denote the set of vertical parts and the set of horizontal parts of L-shapes of $\famF$, respectively.

In this subsection, we prove Theorem~\ref{thm:L-shapes}. We shall use the following technical lemma, which exploits the geometric structure of L-shapes.
\begin{proposition}
\label{DF}
Let $\tilde{\famX}$ be a set of $m$ grounded L-shapes. Let $\tilde{\famI}$ be a set of grounded L-shapes such that:
\begin{enumerate}
\item Each L-shape of $\tilde{\famI}$ has exactly one left and one right neighbor in $\tilde{\famX}$;

\item The elements of $\tilde{\famI}$ are pairwise disjoint;

\item No two L-shapes of $\tilde{\famI}$ have the same neighborhood.
\end{enumerate}
Then $|\tilde{\famI}|=O(m\log m)$.
\end{proposition}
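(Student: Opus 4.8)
The plan is to reduce Proposition~\ref{DF} to a planarity statement and then to pay a single logarithmic factor through a divide-and-conquer on the horizontal order of the L-shapes of $\tilde{\famX}$. First I would reformulate the quantity to be bounded. Since by assumption~(1) each $\li\in\tilde{\famI}$ has exactly one left neighbor $X_L(\li)$ and exactly one right neighbor $X_R(\li)$ in $\tilde{\famX}$, and by assumption~(3) no two elements of $\tilde{\famI}$ have the same neighborhood, the map $\li\mapsto\{X_L(\li),X_R(\li)\}$ is injective. Hence $|\tilde{\famI}|$ is exactly the number of edges of the graph $G$ on vertex set $\tilde{\famX}$ whose edges are the realized pairs $\{X_L(\li),X_R(\li)\}$. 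Note that each $\li$ meets $\tilde{\famX}$ in precisely two points: $h(X_L(\li))$ crosses $v(\li)$, and $v(X_R(\li))$ crosses $h(\li)$. Consequently, \emph{if} the members of $\tilde{\famX}$ were pairwise disjoint, then taking $S_1=\tilde{\famX}$ and $S_2=\tilde{\famI}$ in Lemma~\ref{lem:prelim-strings-planar} would show that $G$ is planar, yielding the stronger bound $|\tilde{\famI}|=O(m)$. The entire difficulty is that the L-shapes of $\tilde{\famX}$ may cross one another, so Lemma~\ref{lem:prelim-strings-planar} cannot be applied directly; the extra $\log m$ factor is precisely the price of this non-disjointness.

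To get around this, I would sort the elements of $\tilde{\famX}$ by the $x$-coordinate of their basepoints and split them at the median basepoint $x^{*}$, writing $\tilde{\famX}=\tilde{\famX}_{L}\cup\tilde{\famX}_{R}$ with $|\tilde{\famX}_L|,|\tilde{\famX}_R|\le\lceil m/2\rceil$. Because every $\li$ satisfies $b(X_L(\li))<b(\li)<b(X_R(\li))$, each edge of $G$ falls into exactly one of three classes: both endpoints in $\tilde{\famX}_L$, both in $\tilde{\famX}_R$, or one on each side (``straddling'' the vertical line $\ell\colon x=x^{*}$). The first two classes are honest instances of the same problem on a family of size $\lceil m/2\rceil$: uniqueness of the single left and right neighbor, pairwise disjointness, and distinctness of neighborhoods are all inherited when passing to a subfamily, and an $\li$ with both neighbors in $\tilde{\famX}_L$ cannot acquire new neighbors there. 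Writing $f(m)$ for the maximum possible value of $|\tilde{\famI}|$, I therefore obtain the recursion $f(m)\le 2f(\lceil m/2\rceil)+g(m)$, where $g(m)$ bounds the number of straddling edges, which solves to $f(m)=O(m\log m)$ as soon as $g(m)=O(m)$.

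The heart of the matter — and the step I expect to be the main obstacle — is thus the linear bound $g(m)=O(m)$ on the number of straddling pairs, which must exploit the pairwise disjointness of $\tilde{\famI}$ relative to the line $\ell$. For a straddling $\li$ one has $b(X_L(\li))<x^{*}<b(X_R(\li))$, and this forces a horizontal segment to cross $\ell$: either $h(\li)$ crosses $\ell$ (when $b(\li)\le x^{*}$, since then $v(X_R(\li))$ lies to the right of $\ell$ while $v(\li)$ lies on or to its left), or $h(X_L(\li))$ crosses $\ell$ (when $b(\li)>x^{*}$). I would split the straddling pairs into these two mirror-symmetric cases. In each case the relevant curves cross $\ell$ at distinct depths by the disjointness of $\tilde{\famI}$, and cutting the configuration along $\ell$ is intended to expose, on the appropriate side, a pairwise disjoint family of strings that the disjoint curves of $\tilde{\famI}$ cross in the prescribed incidence pattern, so that Lemma~\ref{lem:prelim-strings-planar} applies and produces a planar graph on $\tilde{\famX}$; a direct count as in Claim~\ref{Cl:union-of-planar}, together with the degeneracy argument of Claim~\ref{Cl:r-degenerate}, then gives $g(m)=O(m)$.

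The delicate point — and the part I would spend the most care on — is carrying out this ``cut at $\ell$'' honestly. The obstruction is that restricting the left-neighbor L-shapes to one side of $\ell$ does not by itself make them pairwise disjoint, so one cannot simply invoke Lemma~\ref{lem:prelim-strings-planar} with the truncated L-shapes as $S_1$. The resolution should hinge on the unique-neighbor hypothesis~(1): because each straddling $\li$ has a \emph{single} arm crossing $\ell$ and a \emph{single} neighbor on the far side, one can replace each neighbor by a proxy string consisting of its arm continued along $\ell$, and verify that (a) these proxies become pairwise disjoint once routed through the distinct $\ell$-crossing depths, and (b) each straddling $\li$ still meets exactly the two proxies corresponding to $X_L(\li)$ and $X_R(\li)$. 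Establishing (a) and (b) rigorously is the technical crux; once it is in place, planarity and the recursion combine to give $|\tilde{\famI}|=O(m\log m)$, as asserted.
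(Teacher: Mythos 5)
Your high-level architecture is the same as the paper's: observe that if $\tilde{\famX}$ were pairwise disjoint the whole statement would follow from Lemma~\ref{lem:prelim-strings-planar} with a linear bound, then split at a median vertical line, recurse on the two halves, and pay the $\log m$ factor through a recursion of the form $T(m)\le O(m)+2T(m/2)$. (The paper splits $\tilde{\famX}$ three ways --- strictly left of the line, crossing it, strictly right --- rather than two ways by basepoint, but that is a cosmetic difference; your recursion step is sound, since all three hypotheses on $\tilde{\famI}$ are inherited by the subfamilies.) The problem is that the entire content of the proposition lives in the step you defer, namely the linear bound $g(m)=O(m)$ on the straddling pairs, and your sketch of that step does not go through. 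The obstruction you correctly identify --- that the natural choice $S_1=\{h(X_L(\li))\}\cup\{v(X_R(\li))\}$ is not pairwise disjoint, because a horizontal arm of one L-shape of $\tilde{\famX}$ may cross a vertical arm of another --- is real, but your proposed fix of ``proxy strings routed along $\ell$'' cannot work as described: in the case $b(\li)\le x^{*}$ the arm $v(X_R(\li))$ is a vertical segment lying strictly to the right of $\ell$ and the arm $h(X_L(\li))$ need only reach $v(\li)$, which sits at or to the left of $\ell$, so neither arm need meet $\ell$ at all and there is no crossing depth through which to route a proxy. Even granting some rerouting, you would still have to verify that the proxies are pairwise disjoint \emph{among themselves} and that they do not acquire spurious intersections with other members of $\tilde{\famI}$; none of this is addressed, and you flag it yourself as unestablished.

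For comparison, the paper resolves exactly this disjointness obstruction by a reflection trick (Claim~\ref{LPG}): the vertical arms of one side are reflected across the baseline into the upper halfplane, where they can no longer meet the horizontal arms left in the lower halfplane, and each connecting L-shape of $\tilde{\famI}$ is extended by its own reflection so that it still joins the two arms; this puts one genuinely in the setting of Lemma~\ref{lem:prelim-strings-planar}. The paper also needs a separate argument (a partial order on the L-shapes crossing the median line, showing each relevant pair consists of one minimal and one non-minimal element) before Claim~\ref{LPG} can be applied to the hardest subcase. Since your proposal contains neither of these ingredients nor a working substitute, the linear bound on the cross terms --- and hence the proposition --- is not actually proved.
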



\begin{proof}

Let us denote by $T(m)$ the maximum possible size of $\tilde{\famI}$ when $\tilde{\famX}$ has cardinality $m$. One observes that if  $m \leq 1$, then $T(m)=0$.  Let $\tilde{L}$ be a vertical line such that at most $m/2$ L-shapes of $\tilde{\famX}$ are strictly to the left of $\tilde{L}$ and  at most $m/2$ L-shapes of $\tilde{\famX}$ are strictly to the right of $\tilde{L}$. Let us denote by $\tilde{\famX}_L$, $\tilde{\famX}_M$, and $\tilde{\famX}_R$ the sets of L-shapes of $\tilde{\famX}$  strictly to the left of $\tilde{L}$, crossing $\tilde{L}$, and strictly to the right of $\tilde{L}$, respectively.


Let us define $\famI'\subset \tilde{\famI}$ to be the subset consisting of all L-shapes for which at most one neighbor is in $\tilde{\famX}_L$ and at most one neighbor is in $\tilde{\famX}_R$. We prove that $|\famI'| \leq 15m$. The L-shapes of $\tilde{\famI}\setminus \famI'$ have both neighbors in  $\tilde{\famX}_L$ or both neighbors in  $\tilde{\famX}_R$, therefore our bound on $|\famI'|$ will mean that $|\tilde{\famI}|= |\famI'| + |\tilde{\famI}\setminus \famI'|\leq 15m+2T(m/2)$. Taking $\tilde{\famI}$ of size $T(m)$,  we get $  T(m)\leq 15m+2T(m/2)$, and thus, $|\tilde{\famI}|= O(m \log m)$.


 We partition the L-shapes of $\famI'$ into $3$ sets. Next, we prove an upper bound on the size of each of these sets.
 \begin{itemize}
 \item $\famI_1$ consists of the L-shapes of $\famI'$ that have a neighbor in $\tilde{\famX}_L$  and a neighbor in $\tilde{\famX}_R$.

 \item $\famI_2$ consists of the L-shapes of $\famI'$ that have a neighbor in $\tilde{\famX}_M$ and a neighbor in $\tilde{\famX}_L\cup \tilde{\famX}_R $.


 \item  $\famI_3$ consists of the L-shapes of $\famI'$ that have both neighbors in $\tilde{\famX}_M$. (See Figure~\ref{fig:edge-L-shapes}.)
 \end{itemize}

%
%
%

  \begin{figure}
   \subfigure{
   \centering
        \includegraphics[width=0.35\textwidth]{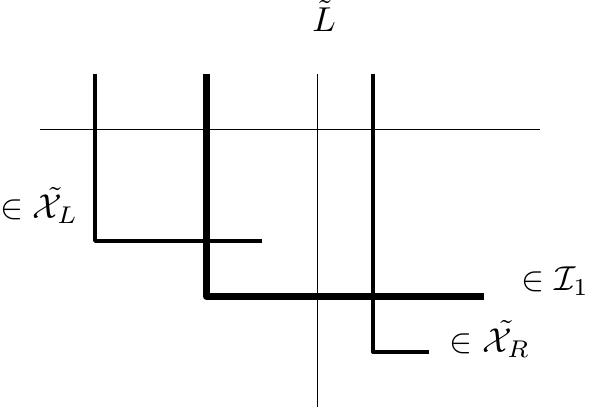}
    }
    \subfigure{
    \centering
        \includegraphics[width=0.35\textwidth]{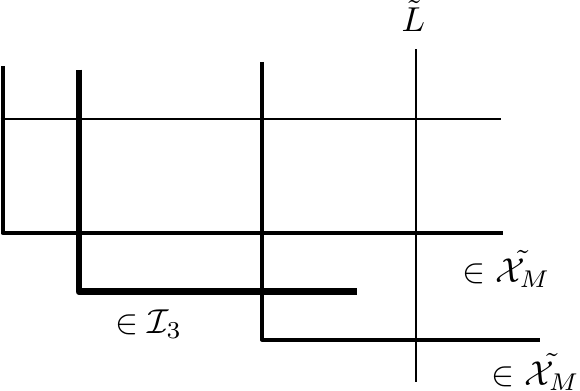}
    }
    \subfigure{
        \includegraphics[width=1.05\textwidth]{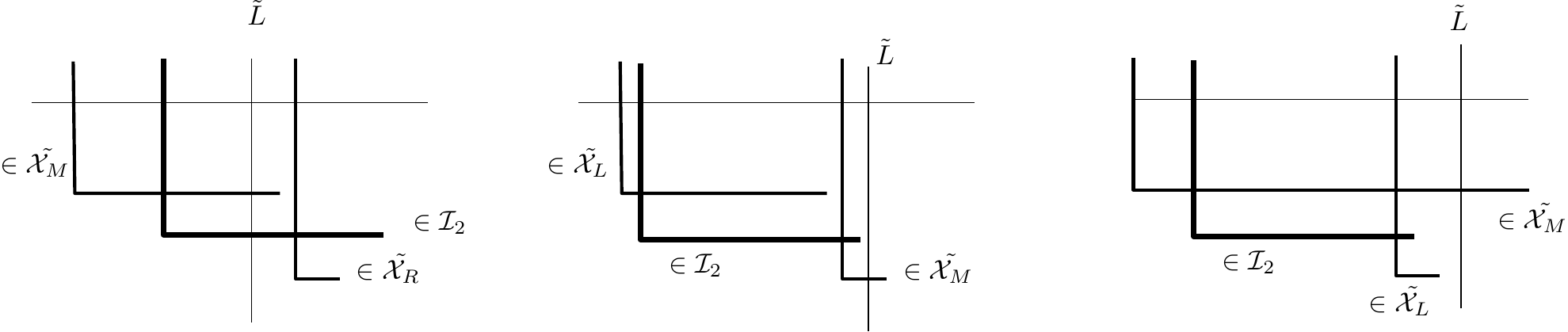}
    }
    \caption{Illustration for the definitions of $\famI_1,\famI_2$ and $\famI_3$.}
    \label{fig:edge-L-shapes}

    \end{figure}

\mn \textbf{Upper bound on the cardinality of $\famI_1$.} Each L-shape of $\famI_1$ intersects the horizontal part of some L-shape in $\tilde{\famX}_L$ and the vertical part of some other L-shape in $\tilde{\famX}_R$.  Define a graph whose vertex set is $h(\tilde{\famX}_L)\cup v(\tilde{\famX}_R)$  and whose edge set consists of pairs $\{h,v\}$ such that $h\in h(\tilde{\famX}_L),v\in v(\tilde{\famX}_R)$  and $h,v$ are intersected by some L-shape of $\famI_1$. By Lemma \ref{lem:prelim-strings-planar}, this graph is planar.
Indeed, setting $S_1=h(\tilde{\famX}_L)\cup v(\tilde{\famX}_R)$ and defining $S_2$ as the parts of $\famI_1$'s L-shapes between the two intersections, after a small perturbation, we are exactly in the setting of Lemma~\ref{lem:prelim-strings-planar}. (Note that the elements of $\famI_1$ are pairwise disjoint, and we take vertical segments from the right side of $\tilde{L}$ and horizontal segments from the left side of $\tilde{L}$.)
As there is a bijection between the edges of this graph and the L-shapes of  $\famI_1$, we get $|\famI_1|\leq 3 |\tilde{\famX}|= 3m$.

\medskip

To bound the cardinality of $\famI_2$ and $\famI_3$ we prove the following simple, yet powerful, claim.
 \begin{claim}
 \label{LPG}
Let  $\famA$, $\famB$ be two disjoint sets of grounded L-shapes and let $\famY$ be a set of pairwise disjoint grounded L-shapes such that:
\begin{enumerate}
\item Each $\li\in\famY$ intersects exactly one L-shape of  $\famA$ and exactly one L-shape of $\famB$;

\item No two L-shapes of $\famY$ have the same neighborhood.
\end{enumerate}
Then $|\famY|\leq 6(|\famA \cup\famB|)$.
 \end{claim}

\begin{proof}[Proof of Claim~\ref{LPG}.]
Let us partition $\famY$ into $\famY_1$ and $\famY_2$ as follows:

\begin{itemize}
\item $\famY_1$ consists of those L-shapes that intersect the horizontal part of an L-shape of $\famA$ and the vertical part of an L-shape of $\famB$.
\item $\famY_2$ consists of those L-shapes that intersect the vertical part of an L-shape of $\famA$ and the horizontal part of an L-shape of $\famB$.
 \end{itemize}


We prove that $|\famY_1| \leq 3|\famA \cup \famB|$; the proof that $|\famY_2| \leq 3|\famA \cup \famB|$ is analogous.
 Define $\tilde{G}$ to be the graph whose vertex set is $\famA \cup \famB$ and whose edge set consists of pairs $\{a,b\}$ with $ a \in \famA, b \in \famB$ such that $a$ and $b$ are intersected by some L-shape of $\famY_1$. We show that the graph $\tilde{G}$ is planar, which implies the claimed bound, since the number of edges in $\tilde{G}$ is exactly $|\famY_1|$ (recall that no two L-shapes of $\famY$ have the same neighborhood).

Let $\rho_{x_1}$ be the symmetry with respect to the $x_1$-axis. Set $v'(\famB)=\rho_{x_1}(v(\famB))$. Also define $\famY'=\{ S \cup \rho_{x_1}(S) : S \in \famY_1\}$. (See Figure~4.)

\begin{figure}[tb]
\label{fig:symmetry}
\begin{center}
\scalebox{0.6}{
\includegraphics{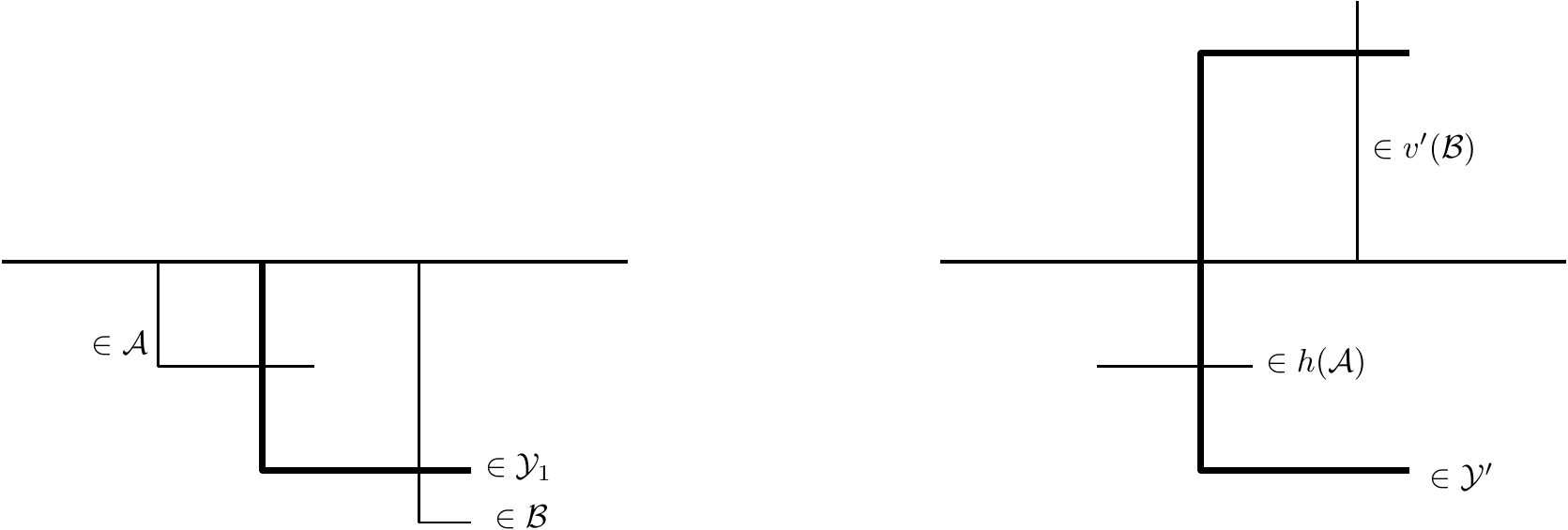}
} \caption{Illustration for the proof of Claim~\ref{LPG}.}
\end{center}
\end{figure}

 The graph  whose vertex set is $v'(\famB)\cup h(\famA)$ and whose edge set consists of pairs of vertices  intersected by some curve of $\famY'$ is isomorphic to $\tilde{G}$. Furthermore, this graph is easily seen to be planar. Indeed, on the one hand, the horizontal segments are pairwise disjoint and so are the vertical segments. On the other hand, no segment of $v'(\famB)$ can intersect a segment of $h(\famA)$, since these sets lie in different halfplanes delimited by the $x_1$-axis. Moreover, the curves of $\famY'$ are pairwise disjoint and each of them intersects exactly two segments of  $v'(\famB)\cup h(\famA)$. This means that putting $S_1=v'(\famB) \cup h(\famA)$ and $S_2=\famY'$, we are exactly in the setting of Lemma \ref{lem:prelim-strings-planar}.  Therefore, $\tilde(G)$ is planar and $|\famY_1|= |E(\tilde{G})|  \leq 3|A \cup B|$. This completes the proof of Claim~\ref{LPG}.
\end{proof}

\mn \textbf{Upper bound on the cardinality of $\famI_2$.} Defining $\famA=\tilde{\famX}_M$,  $\famB=\tilde{\famX}_L\cup \tilde{\famX}_R $, and $\famY=\famI_2$, we are exactly in the setting of Claim \ref{LPG}, which implies $|\famI_2|=|\famY| \leq 6|\famA \cup \famB| \leq 6 |\tilde{\famX}_M \cup \tilde{\famX}_L\cup \tilde{\famX}_R | = 6m$.


\mn \textbf{Upper bound on the cardinality of $\famI_3$.} Let us define a partial order $\prec$ on $\famX_M$ as follows. For $\li_1, \li_2 \in \famX_M$, we have  $\li_1 \prec \li_2 $ if and only if  $b(\li_1) < b(\li_2)$ and $\norm{v(\li_1)} < \norm{v(\li_2)}$.

 It is easy to see that if $\li\in\famI_3$ intersects $\li_1, \li_2 \in \famX_M$, then either $\li_1$ or $\li_2$ is minimal. Indeed, assume by contradiction that this is not the case. Observe that an L-shape of $\famI_3$ that intersects two L-shapes of $\famX_M$ must intersect the vertical part of one of them, and therefore, is based to the left of $\tilde{L}$. W.l.o.g., $\li$ intersects $h(\li_1)$ and $v(\li_2)$. Let $\li'_1\in \famX_M$ be such that $\li'_1 \prec \li_1$, which exists by the assumption that $\li_1$ is not minimal.  We get that $\li$ intersects both $h(\li_1)$ and $h(\li'_1)$, which contradicts the definition of $\tilde{\famI}$.

 On the other hand, it is not hard to see that $\li_1$ and $\li_2$  cannot be minimal simultaneously, since in such a case, any L-shape intersecting both $\li_1$ and $\li_2$,  must intersect either both $v(\li_1)$ and $v(\li_2)$ or both $h(\li_1)$ and $h(\li_2)$. This holds also for $\li$, which contradicts the assumption that $\li \in \famI_3$ has exactly one left neighbor and exactly one right neighbor.

To conclude, any L-shape of $\famI_3$ intersects exactly one minimal and exactly one non-minimal elements of $\famX_M$.

Defining $\famA$ to be the set of minimal elements of $\famX_M$, $\famB=\famX_M \setminus \famA$, and  $\famY=\famI_3$, we are exactly in the setting of Claim \ref{LPG}. Thus, $|\famI_3|= |\famY| \leq 6(|\famA|+ |\famX_M \setminus \famA|)= 6m$.




\medskip

Combining the upper bounds on $|\famI_1|$, $|\famI_2|$, and $|\famI_3|$, we obtain  $|\famI'|=|\famI_1|+|\famI_2|+|\famI_3| \leq 3m+6m+6m=15m$, and therefore, the inequality $T(m)\leq 15m+2T(m/2)$ holds as claimed.
This completes the proof of Proposition~\ref{DF}.
\end{proof} 

We are now ready to present the proof of Theorem~\ref{thm:L-shapes}. Let us recall its statement.

\mn \textbf{Theorem~\ref{thm:L-shapes}}.
Let $L(n)$ denote the largest CF-chromatic number of a family of at most $n$ grounded L-shapes. Then $\Omega(\log n) \leq L(n) \leq O(\log^{3} n).$

\begin{proof}[Proof-skeleton]
The lower bound is a straightforward consequence of Claim~\ref{cl:olap_is_frames} and of Lemma~\ref{LBint}.

For the upper bound, we first present the skeleton of the proof and then we fill in the required details.
Let $\famF$ be a family of $n$ grounded L-shapes. One can assume that no vertex of the underlying intersection graph is isolated, since such vertices can be colored with any color. We would like to find a partition of $\famF$ into $3$ families $\mathcal{A}_1,\mathcal{A}_2$, and $\mathcal{A}_3$  such that:
\begin{itemize}
\item[\textbf{i)}] $|\mathcal{A}_1|,|\mathcal{A}_3|\leq n/2,$  and no L-shape of  $\mathcal{A}_1$ intersects an L-shape of $\mathcal{A}_3$.
\item[\textbf{ii)}] Each $\mathcal{A}_i$ does not have any isolated vertex.
\item[\textbf{iii)}]$\mathcal{A}_2$ can be CF-colored with $O(\log^2 n)$ colors.
 \end{itemize}
If we find such a partition, then the upper bound of the theorem follows, since such a partition implies $L(n) \leq L(n/2)+O(\log^2 n)$, and thus, $L(n)=O(\log^3 n)$. Indeed, let us  CF-color $\mathcal{A}_1$ and $\mathcal{A}_3$ with the same palette of $L(n/2)$ colors and let us  CF-color $\mathcal{A}_2$ with a disjoint palette of $O(\log^2 n)$ colors. If $\li \in \mathcal{A}_1$, then it has a neighbor in $\mathcal{A}_1$ whose color $c$ is unique in $N_{\mathcal{A}_1}(\li)$ - the neighborhood of $\li$ restricted to $\mathcal{A}_1$. Furthermore, $c$ remains unique in $N_{\famF}(\li)$, since $\li$ does not intersect any L-shape of $\mathcal{A}_3$ and $c$ does not appear in $\mathcal{A}_2$. The case $\li \in \mathcal{A}_3$ is symmetric. If $\li \in \mathcal{A}_2$, then some color $\tilde{c}$ appears exactly once in $N_{\mathcal{A}_2}(\li)$. Furthermore, since $\tilde{c}$ is not used to color the families $\mathcal{A}_1$ and $\mathcal{A}_3$, the color $\tilde{c}$ appears exactly once in $N_{\famF}(\li)$. Thus, $\famF$ can be CF-colored with at most $L(n/2)+O(\log^2 n)$ colors. Since $\famF$ is an arbitrary family of $n$ grounded L-shapes, the inequality $L(n) \leq L(n/2)+O(\log^2 n)$ follows.

Let $L$ be a vertical line splitting the family $\mathcal{F}$ into $3$ families $\mathcal{F}_L$, $\mathcal{F}_M$, and $\mathcal{F}_R$ so that $\mathcal{F}_L$, $\mathcal{F}_R$ have cardinality at most $n/2$ and so that their elements lie strictly to the left of $L$ and strictly to the right of $L$, respectively, while $\mathcal{F}_M$ consists of those L-shapes of $\mathcal{F}$ that intersect $L$.

While it may seem natural to put in $\famA_2$ the elements of $\famF_M$, and continue recursively with $\famA_1$ and $\famA_3$ consisting of $\famF_L$'s and $\famF_R$'s elements, respectively, this trivial division does not work. Indeed, there may be elements of $\famF_M$ for which all neighbors are in $\famF_L \cup \famF_R$, and elements of $\famF_L \cup \famF_R$ for which all neighbors are in $\famF_M$. Hence, a more careful division into the families $\famA_i$ is needed here.

To present our division, we have to introduce some additional notations.
Let us partition $\mathcal{F}_M$ into two families: $\mathcal{F}^1_M$ and $\mathcal{F}^2_M$, where $\mathcal{F}^2_M$ consists of those L-shapes of $\mathcal{F}_M$ that do not have any neighbors in $\mathcal{F}_M$ (see Figure~5), and $\mathcal{F}^1_M = \mathcal{F}_M \setminus \mathcal{F}^2_M$.

\begin{figure}[tb]
\label{fig:L-shapes}
\begin{center}
\scalebox{0.6}{
\includegraphics{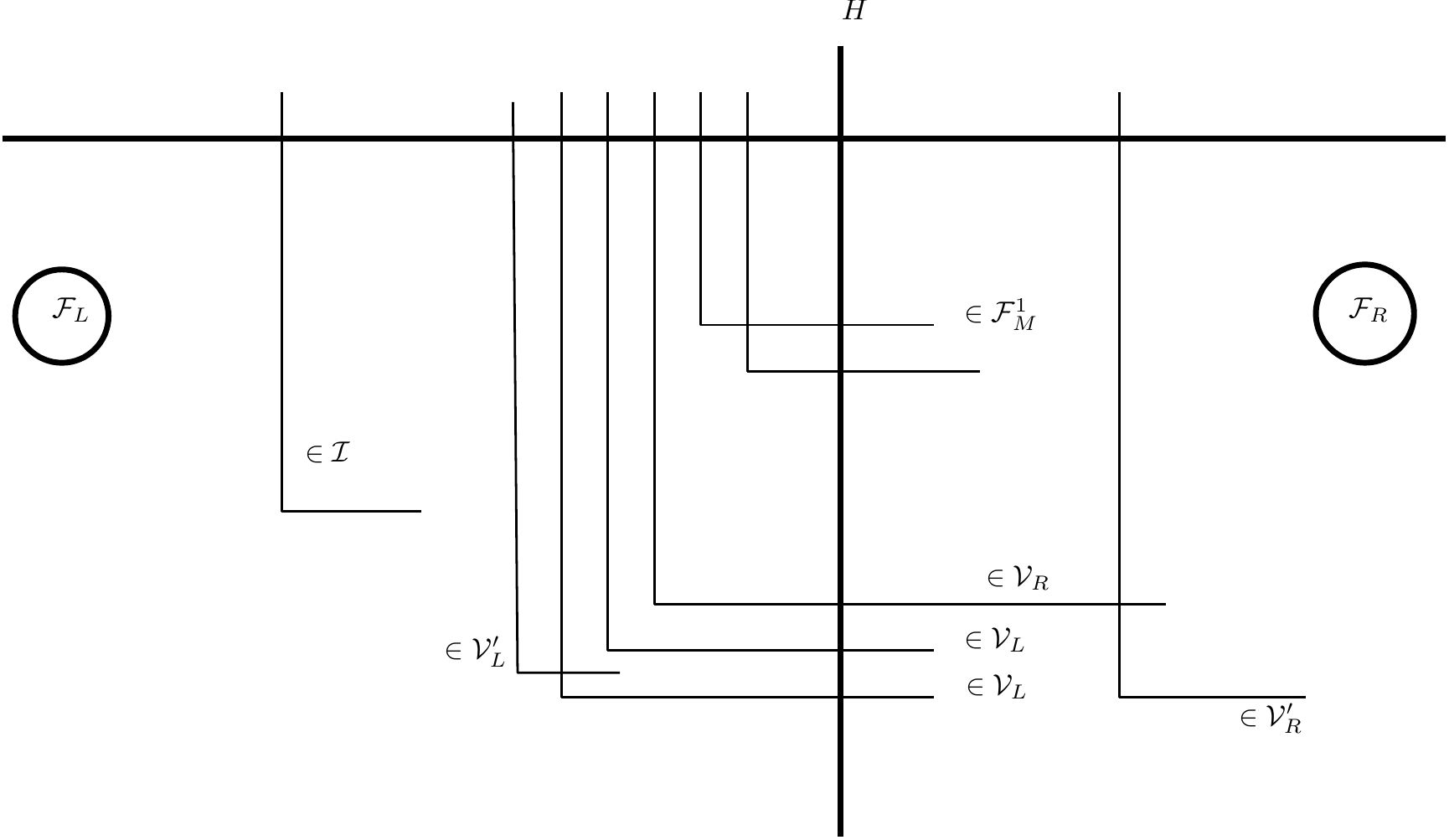}
} \caption{Illustration for the proof of Theorem~\ref{thm:L-shapes}.}
\end{center}
\end{figure}

Define by $\famV_L$ the subfamily of $\famF^2_M$ consisting of all L-shapes having a neighbor in $\famF_L$ and denote by $\famV'_L$ its set of neighbors in $\famF_L$. Similarly, define  by  $\famV_R$ the subfamily of $\famF^2_M$ consisting of all L-shapes having a neighbor in $\famF_R$ and denote by $\famV'_R$ its set of neighbors in $\famF_R$. One may observe that $\famV_L$ and $\famV_R$ are not necessarily disjoint; we will handle this issue later. By definition, an element of $\famF^2_M$ does not have any neighbors in $\famF_M$. Since each L-shape of  $\famF$  has at least one neighbor, this implies that any member of $\famF^2_M$ has a neighbor in $\famV'_L\cup \famV'_R$, and hence, $\famF^2_M=\famV_L\cup\famV_R$.

Let $\famI \subset (\famF_L \cup \famF_R) \setminus(\famV'_L \cup \famV'_R)$ consist of those L-shapes that do not have any neighbors in $(\famF_L\cup  \famF_R) \setminus (\famV'_L \cup \famV'_R)$. In particular, the elements of $\famI$ are pairwise disjoint. It is clear that the neighborhood of each L-shape of $\famI$ is in  $\famF^1_M \cup \famV'_L  \cup \famV'_R$. Indeed,  the neighbors of  $\famF^2_M $ are by definition in $\famV'_L  \cup \famV'_R$, which is contained in the complement of $\famI$.

We are now ready to present the division of $\famF$ into $\famA_1,\famA_2$, and $\famA_3$. We set
\[
\mathcal{A}_1=\famF_L\setminus (\famV'_L\cup \mathcal{I}), \qquad \famA_2=\famI\cup \famF_M \cup  \famV'_R \cup \famV'_L, \qquad \mathcal{A}_3=\famF_R\setminus (\famV'_R\cup \mathcal{I}).
\]
It is clear from the construction that $\mathcal{A}_1$ and $\mathcal{A}_3$ satisfy Conditions~(1,2) above. Hence, it is left to prove that $\mathcal{A}_2$ can be CF-colored with $O(\log^2 n)$ colors. We achieve the required coloring by constructing several intermediate colorings, as we detail below.

\mn \textbf{Notation.} \emph{A proper (resp. CF) coloring of a family} $\famK$ \emph{with respect to a family} $\famL$ is a proper (resp. CF) coloring of the hypergraph arising from $\famK \times \famL$, i.e., the hypergraph whose vertices are the elements of $\famK$, and any element of $\famL$ defines a hyperedge of all the elements of $\famK$ that it intersects.

\medskip \textbf{Intermediate colorings.} We shall use two intermediate colorings, $c_1$ and $c_2$, that we will combine into the required coloring of $\mathcal{A}_2$.

\mn \textbf{The coloring $c_1$.}  $c_1$ is a CF-coloring of $\famX=\famV'_L \cup \famF^1_M \cup \famV'_R$ with respect to $\famV_L \cup \famV_R \cup\famF^1_M \cup \famI$ with $O(\log^2 n)$ colors.

To obtain $c_1$, we define 4 proper colorings:
\begin{itemize}
\item[\textbf{$f_1$}] - A proper coloring of $\famX$ with respect to $\famI$ with $O(\log n)$ colors. This coloring is presented in Claim~\ref{cl:f_1}.
\item[\textbf{$f_2$}] - A proper coloring of $\famF^1_M$ with respect to $\famF^1_M$ with 49 colors. This coloring is presented in Claim~\ref{cl:f_2}.
\item[\textbf{$f_3$}] - A proper coloring of $\famV'_R$ with respect to $\famV_R$ with 4 colors. This coloring is presented in Claim~\ref{cl:f_3}.
\item[\textbf{$f_4$}] - A proper coloring of $\famV'_L$ with respect to $\famV_L$ with 3 colors. This coloring is presented in Claim~\ref{cl:f_4}.
\end{itemize}

We extend each of these four colorings to be defined on the whole set $\famX$, by adding one new color and using it to color the elements of $\famX$ on which the coloring $f_i$ is not defined. Then, we consider the Cartesian product $f_1 \times f_2 \times f_3 \times f_4$, in which any $\li \in \famX$ gets the `color' $(f_1(\li),f_2(\li),f_3(\li),f_4(\li))$. This product supplies a proper coloring of $\famX$ with respect to $\famV_L \cup \famV_R \cup\famF^1_M \cup \famI$ with $O(\log n)$ colors. It will be clear from the proofs that such a coloring can also be obtained  for any subset of $\famX$, and thus by Lemma~\ref{lem:weakToKCF} there exists a CF-coloring of $\famX$ with respect to $\famV_L \cup \famV_R \cup\famF^1_M \cup \famI$ with $O(\log^2 n)$ colors. (Note that a proper coloring of a hypergraph is exactly a $2$-weak coloring of it.)

\mn \textbf{The coloring $c_2$.}  $c_2$ is a CF-coloring of $\famF^2_M$ with respect to $\famV'_L \cup \famV'_R$ with $O(\log^2 n)$ colors, using a palette disjoint from the palette of $c_1$.

To obtain $c_2$, we define two CF-colorings:
\begin{itemize}
\item[\textbf{$f_5$}] - A CF-coloring of $\famF^2_M$ with respect to $\famV'_L$ with $O(\log n)$ colors.
\item[\textbf{$f_6$}] - A CF-coloring of $\famF^2_M$ with respect to $\famV'_R$ with $O(\log n)$ colors.
\end{itemize}
The asserted coloring $c_2$, obtained by the Cartesian product $f_5 \times f_6$, is presented in Claim~\ref{cl:c_2}.

To obtain these colorings, the most complex part is obtaining the coloring $f_1$, and this is where Proposition~\ref{DF} is used.

\textbf{Definition of a CF-coloring $c$ with $O(\log^2 n)$ colors on $\famA_2$.}
The coloring $c$ is defined in the following way:
\begin{itemize}
 \item[\textbf{i)}] $\famX=\famV'_L \cup \famF^1_M \cup \famV'_R$ is colored with $c_1$, 
 \item[\textbf{ii)}]$\famF^2_M $ is colored with $c_2$,
 \item[\textbf{iii)}]$\famI $ is colored with any color that does not appear in the palettes of $c_1$ and $c_2$.
\end{itemize}

By using an elementary case analysis, it is not hard to see that the coloring $c$ is, indeed, a CF-coloring of $\famA_2$ with $O(\log^2 n)$ colors.
Thus, the division of $ \famF $ into sets  $\famA_i$ satisfies all three properties mentioned at the beginning of the proof. This completes the proof-skeleton of Theorem~\ref{thm:L-shapes}. It now remains only to present the colorings $f_1-f_6$.
\end{proof}



In the rest of this section, we concentrate on presenting the six intermediate colorings. We start with the most interesting one, $f_1$.

\begin{claim}
\label{cl:f_1}
There exists a proper coloring $f_1$ of $\famX$ with respect to $\famI$ with $O(\log n)$ colors.
\end{claim}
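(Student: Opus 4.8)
The plan is to mimic the three-auxiliary-graph reduction used in the proof of Theorem~\ref{CFint}, except that the third graph will be controlled by Proposition~\ref{DF} rather than by a bounded-degree argument. Since $N(J)\subseteq\famX$ for every $J\in\famI$, a proper coloring of $\famX$ with respect to $\famI$ is exactly a coloring of $\famX$ in which $N(J)$ is non-monochromatic for every $J\in\famI$ with $|N(J)|\geq 2$. By the geometry of grounded L-shapes, a neighbor $W\in\famX$ of $J$ is either a \emph{left} neighbor (based to the left of $J$, whose horizontal part $h(W)$ crosses $v(J)$) or a \emph{right} neighbor (based to the right of $J$, whose vertical part $v(W)$ crosses $h(J)$). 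I would define three auxiliary graphs on the vertex set $\famX$: put $\{W_1,W_2\}\in E(G_1)$ if $h(W_1),h(W_2)$ are consecutive along $v(J)$ for some $J\in\famI$; put $\{W_1,W_2\}\in E(G_2)$ if $v(W_1),v(W_2)$ are consecutive along $h(J)$ for some $J$; and put $\{W_1,W_2\}\in E(G_3)$ if some $J\in\famI$ has $W_1$ as its unique left neighbor and $W_2$ as its unique right neighbor. A short case analysis then shows that every $J$ with $|N(J)|\geq 2$ contributes an edge of $G:=G_1\cup G_2\cup G_3$ lying inside $N(J)$: if $J$ has at least two left neighbors, two of them are consecutive along $v(J)$ and give a $G_1$-edge; otherwise, if it has at least two right neighbors, we get a $G_2$-edge; and in the remaining case $J$ has exactly one left and one right neighbor, giving a $G_3$-edge. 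Hence any proper coloring of $G$ is a proper coloring of $\famX$ with respect to $\famI$, and it suffices to bound $\chi(G)$.

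For the planarity of $G_1$ and $G_2$ I would invoke Lemma~\ref{lem:prelim-strings-planar}. For $G_1$, set $S_1=h(\famX)$; after a small perturbation making the horizontal parts lie at distinct heights they become pairwise disjoint, and since the elements of $\famI$ are pairwise disjoint, the portions of the segments $v(J)$ lying between consecutive crossings (again slightly perturbed so as not to share endpoints) form a pairwise disjoint family $S_2$, each member of which crosses exactly the two consecutive horizontal parts it joins. The graph produced by Lemma~\ref{lem:prelim-strings-planar} is exactly $G_1$, so $G_1$ is planar; the argument for $G_2$, using $S_1=v(\famX)$ and pieces of the pairwise disjoint segments $h(J)$, is identical.

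The crux is $G_3$, and this is where Proposition~\ref{DF} enters. For each edge $\{W_1,W_2\}\in E(G_3)$ pick one witnessing $J\in\famI$ and let $\tilde{\famI}$ be the set of chosen witnesses. Then $\tilde{\famI}$ meets the hypotheses of Proposition~\ref{DF} with $\tilde{\famX}=\famX$: each of its L-shapes has exactly one left and one right neighbor in $\famX$ by construction; its members are pairwise disjoint since $\famI$ is; and distinct edges of $G_3$ have distinct neighborhoods $\{W_1,W_2\}$, so no two witnesses share a neighborhood. Proposition~\ref{DF} then yields $|E(G_3)|=|\tilde{\famI}|=O(m\log m)$, where $m=|\famX|\leq n$. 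The same argument applied to any induced subgraph $G_3[\famX']$ shows $|E(G_3[\famX'])|=O(|\famX'|\log n)$, since a witness whose neighborhood is contained in $\famX'$ still has exactly one left and one right neighbor within $\famX'$.

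Combining these bounds, on every subset $\famX'\subseteq\famX$ we get $|E(G[\famX'])|\leq (3|\famX'|-6)+(3|\famX'|-6)+O(|\famX'|\log n)=O(|\famX'|\log n)$, so some vertex has degree $O(\log n)$; thus $G$ is $O(\log n)$-degenerate, and by Claim~\ref{Cl:r-degenerate} it is $O(\log n)$-colorable, producing the proper coloring $f_1$. Because the entire argument — the reduction, the planarity, and the degeneracy bound via Proposition~\ref{DF} — is insensitive to deleting vertices of $\famX$, the same $O(\log n)$-color proper coloring can be produced for any subset of $\famX$, which is precisely what is needed to feed $f_1$ into Lemma~\ref{lem:weakToKCF}. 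I expect the main obstacle to be the careful bookkeeping in the case analysis together with the verification that restricting to a subset $\famX'$ keeps each chosen witness a legitimate element of $\tilde{\famI}$ for the induced instance, so that Proposition~\ref{DF} applies uniformly and yields genuine $O(\log n)$-degeneracy rather than merely a bound on the full graph.
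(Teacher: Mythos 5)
Your proposal is correct and follows essentially the same route as the paper: the same three auxiliary graphs $G_1,G_2,G_3$ on $\famX$, planarity of $G_1,G_2$ via Lemma~\ref{lem:prelim-strings-planar}, the bound $|E(G_3)|=O(|\famX|\log|\famX|)$ via Proposition~\ref{DF} applied to one chosen witness per edge, and the resulting $O(\log n)$-degeneracy of $G_1\cup G_2\cup G_3$. The only caveat, which the paper glosses over equally briefly, is that for a subset $\famX'$ one should rebuild the auxiliary graphs with neighborhoods taken relative to $\famX'$ rather than literally take induced subgraphs, but the identical argument applies.
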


\begin{proof}
We recall that $\famX=\famV'_L \cup \famF^1_M \cup \famV'_R$ and let $H_1$ be the hypergraph arising from  $\famX\times \famI$. We prove that there exists a proper coloring of $H_1$ with $O(\log n)$ colors. It will be clear that the same argument is also valid for the hypergraph arising from $\famX'\times \famI$ for any $X' \subset X$ of size $n'$, yielding a coloring with $O(\log n')$ colors. Analogously to the proof of step 3 in Proposition \ref{Prop:Feeding-S_i}, we define  $3$ graphs  $G_1, G_2, G_3$ as follows:

\begin{itemize}
\item For any $W_1,W_2 \in \famX$, $\{W_1,W_2\} \in E(G_1)$ iff there exists $W \in  \famI$ such that $h(W_1)$ and $h(W_2)$ are consecutive along $v(W)$; namely, such that no element of $\famX$ intersects $v(W)$ between the points $W \cap W_1$ and $W \cap W_2$.
\item For any $W_1,W_2 \in  \famX$, $\{W_1,W_2\} \in E(G_2)$ iff there exists $W \in  \famI$ such that $v(W_1)$ and $v(W_2)$ are consecutive along $h(W)$.
\item For any $W_1,W_2 \in \famX$, $\{W_1,W_2\} \in E(G_3)$ iff there exists $W \in  \famI$ such that $N_{\famX}(W)=\{ W_1, W_2 \}$, one of them (w.l.o.g., $W_1$) is a left neighbor of $W$ (i.e., $h(W_1)$ intersects $v(W)$), and the other is a right neighbor of $W$.
\end{itemize}

Clearly, any hyperedge of $H_1$ of size at least $2$ contains an edge of $G=G_1\cup G_2\cup G_3$. Therefore, a proper coloring of $G$ is a proper coloring of $H_1$.


By Proposition~\ref{DF}, any induced subgraph of  $G_3$ on $\tilde{n}$ vertices has $O(\tilde{n} \log \tilde{n})$ edges. Indeed, for each edge $e$ of $G_3$  select exactly one L-shape $\li_e\in \famI$ such that $N_{\famX}(\li_e)=e$. Since the elements of $\famI$ are, by definition, pairwise disjoint, if $\tilde{\famI}$ denotes the set of selected L-shapes and $\tilde{\famX}=\famX$, then $\tilde{\famX}$, $\tilde{\famI}$ satisfy the assumptions of Proposition~\ref{DF} and therefore, $|E(G_3)|=|\tilde{\famI}|= O(|\famX|\log |\famX|)$. The same reasoning is valid for induced subgraphs of  $G_3$.

Exactly as in Lemma \ref{lem:G_1 and G_2 are planar},  the graphs $G_1$ and $G_2$ are planar. Since planar graphs have a linear number of edges in terms of their number of vertices and are closed under taking induced subgraphs, this implies that any induced subgraph of  $G=G_1\cup G_2 \cup G_3$  on $\tilde{n}$ vertices has $O(\tilde{n} \log \tilde{n})$ edges. Therefore, $G$, a graph on $|\famX|$ vertices, is  $O(\log |\famX|)$-degenerate, and hence is  $O(\log |\famX|)$-colorable according to Claim \ref{Cl:r-degenerate}. Since any proper coloring of $G$ is a proper coloring of $H_1$, the assertion of Claim~\ref{cl:f_1} follows.
\end{proof} 

To obtain the coloring $f_2$, needed for the proof of Theorem~\ref{thm:L-shapes}, we use the following observation:

\begin{observation}
\label{LIO}
Let $\famG$ be a family of grounded L-shapes that intersect a vertical line $L$. Then the intersection graph of $\famG$ is an overlap graph of some family of intervals.
\end{observation}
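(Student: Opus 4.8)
The plan is to exploit the rigid geometry forced on grounded L-shapes that cross a single vertical line. Recall that in our setting every grounded L-shape has the fixed orientation in which the vertical part $v(\li)$ descends from the basepoint $b(\li)$ and the horizontal part $h(\li)$ extends to its right (this is the orientation used in the transformation of Claim~\ref{cl:olap_is_frames}). First I would normalize: after a small perturbation I may assume that all basepoints are distinct, that none lies on $L$, and that the vertical lengths $\norm{v(\li)}$ are pairwise distinct. Since each $\li\in\famG$ is right-opening, its horizontal extent lies to the right of $b(\li)$, so $\li$ can meet $L$ only through $h(\li)$, and this happens exactly when $b(\li)$ lies strictly to the left of $L$ while $h(\li)$ reaches past $L$. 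Hence every member of $\famG$ has its basepoint to the left of $L$ and its horizontal part crossing $L$.

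The main step, and the step I expect to be the crux, is an intersection characterization. For $\li_1,\li_2\in\famG$ whose basepoints satisfy $x(b(\li_1))<x(b(\li_2))$, I would show that $\li_1\cap\li_2\neq\emptyset$ if and only if $\norm{v(\li_1)}\le\norm{v(\li_2)}$. Two such axis-parallel L-shapes can meet only where the vertical part of one stabs the horizontal part of the other; the verticals sit at distinct $x$-coordinates and the horizontals sit at distinct heights, so these are the only candidate meetings. The vertical $v(\li_1)$ is strictly to the left of $b(\li_2)$, so it cannot reach $h(\li_2)$ (whose span starts at $b(\li_2)$); the only possibility is $v(\li_2)$ meeting $h(\li_1)$. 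Since $x(b(\li_1))<x(b(\li_2))<x_L<f(\li_1)$, the foot of $v(\li_2)$ lies horizontally under $h(\li_1)$, so this meeting occurs precisely when $v(\li_2)$ descends at least as far as $h(\li_1)$, i.e. $\norm{v(\li_2)}\ge\norm{v(\li_1)}$. Thus $\li_1$ and $\li_2$ are adjacent exactly when the pairs $(\,x(b(\li)),\norm{v(\li)}\,)$ are comonotone, which is the comparability relation of a planar dominance order.

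Finally I would read off the interval overlap representation. Let $g$ be any strictly increasing function whose values all exceed the $x$-coordinates of all basepoints, and set $I_\li=[\,x(b(\li)),\,g(\norm{v(\li)})\,]$. Because every left endpoint lies below every right endpoint, any two of these intervals share a point, so they are never disjoint; therefore two of them overlap (intersect without one containing the other) exactly when their endpoints are comonotone, while anti-comonotone pairs are nested. By the previous paragraph this matches adjacency versus non-adjacency, so $\li\mapsto I_\li$ is an isomorphism from the intersection graph of $\famG$ onto the overlap graph of $\{I_\li\}$, proving the observation; this map is precisely the reverse of the construction in Claim~\ref{cl:olap_is_frames}(1) and is clearly computable in polynomial time. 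The only genuine difficulty is the intersection characterization: one must use the grounded right-opening orientation together with the hypothesis that every horizontal part crosses the \emph{single} line $L$ in order to eliminate all meetings except ``the right-based vertical stabbing the left-based horizontal,'' after which the overlap realization is immediate.
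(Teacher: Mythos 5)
Your proof is correct and is essentially the paper's own argument: the paper maps each $\li$ to the interval $[-x(\li),y(\li)]$, where $x(\li)$ is the distance from $L\cap H$ to $b(\li)$ and $y(\li)$ is the distance from $L\cap H$ to $\li\cap L$ (which equals $\norm{v(\li)}$), and your intervals $[x(b(\li)),g(\norm{v(\li)})]$ are the same construction up to a monotone reparametrization of the endpoints. The only difference is that you spell out the intersection characterization that the paper dismisses with ``clearly.''
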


 \begin{proof}
 Let $O$ be the intersection point of  $L$  and the baseline. For each $\li\in \famG$, define  $x(\li)$ and $y(\li)$ to be the distance between $O$ and  $b(\li)$ and the distance between $O$ and $\li\cap L$, respectively.  We map each $\li\in \famG$  to the interval $[-x(\li),y(\li)]$. Clearly, for each pair $\li, \li' \in\famG$, $\li$ intersects  $\li'$  if and only if $[-x(\li),y(\li)]$ and $[-x(\li'),y(\li')]$ overlap.
 \end{proof}

 We are now ready to present the coloring $f_2$.
 \begin{claim}
 \label{cl:f_2}
 There exists a proper coloring $f_2$ of $\famF^1_M$ with respect to $\famF^1_M$ with 49 colors.
 \end{claim}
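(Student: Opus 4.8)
The plan is to mimic the three–graph construction used for $f_1$ (Claim~\ref{cl:f_1}) and for Step~3 of Proposition~\ref{Prop:Feeding-S_i}, but to exploit the extra rigidity coming from the fact that \emph{every} member of $\famF^1_M$ crosses the splitting line $L$. First I would record, via Observation~\ref{LIO}, that the intersection graph of $\famF^1_M$ is the overlap graph of a family of intervals all containing the common point $O=L\cap\text{baseline}$; equivalently, representing each L-shape $W$ by the point $(a_W,b_W)$ formed by the endpoints of its interval (with $a_W<0<b_W$), two members are adjacent iff these points are comparable in the coordinatewise (dominance) order. Since a proper coloring of $\famF^1_M$ with respect to itself is precisely a proper coloring of its neighborhood hypergraph, it suffices to exhibit a graph $G$ on vertex set $\famF^1_M$ with constant chromatic number such that every neighborhood of size $\geq 2$ contains an edge of $G$: properly coloring $G$ then properly colors the hypergraph.

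I would define $G_1,G_2,G_3$ exactly as in Claim~\ref{cl:f_1}, now with $\famF^1_M$ playing both roles: $G_1$ joins two members whose horizontal parts are consecutive along the vertical part of a third member (two left neighbors), $G_2$ joins two members whose vertical parts are consecutive along the horizontal part of a third (two right neighbors), and $G_3$ joins the unique left neighbor and unique right neighbor of a member $W$ whose neighborhood has size exactly $2$. As in $f_1$, any neighborhood of size $\geq 2$ contains an edge of $G=G_1\cup G_2\cup G_3$: if $W$ has $\geq 2$ left neighbors it spans a $G_1$-edge, if $\geq 2$ right neighbors a $G_2$-edge, and otherwise it has exactly one of each and these form a $G_3$-edge. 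The planarity of $G_1$ and $G_2$ then follows exactly as in Lemma~\ref{lem:G_1 and G_2 are planar}: the vertical parts are pairwise disjoint and so are the horizontal parts, so the ``consecutive along a segment'' definitions put us in the setting of Lemma~\ref{lem:prelim-strings-planar}.

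The hard part will be $G_3$, and here the argument \emph{must} depart from Lemma~\ref{lem:max-degree-of-G_3}: in this self-intersecting setting $G_3$ can have unbounded degree, since a single ``innermost'' L-shape may be the common left neighbor of arbitrarily many witnesses, each contributing its own right neighbor. So I cannot bound its degree and must instead bound its chromatic number through planarity. The key structural observation, which is exactly where the common-point geometry enters, is that every $G_3$-edge joins a dominance-\emph{minimal} member to a dominance-\emph{maximal} one: a witness $W$ has one neighbor below and one above, and if its lower neighbor $W_1$ had any element $Z$ below it, then $Z<W_1<W$ would give $Z<W$ as a second lower neighbor — a contradiction; symmetrically its upper neighbor is maximal. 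Hence $G_3$ is bipartite between the minimal and maximal elements; minimal elements are pairwise incomparable, i.e.\ their intervals are nested, i.e.\ they are pairwise disjoint as L-shapes (and likewise the maximal ones). Using the two resulting staircases, each linearly ordered by basepoint, together with the empty-quadrant condition defining the witnesses, I would show — by a further application of Lemma~\ref{lem:prelim-strings-planar}, taking the minimal horizontal parts and maximal vertical parts as $S_1$ and the witnesses as the connecting strings $S_2$ — that $G_3$ is planar as well (or at least $O(1)$-degenerate); I expect this to be the only genuinely new obstacle in the claim.

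Finally I would combine the three graphs just as in Step~4 of Proposition~\ref{Prop:Feeding-S_i} and Claim~\ref{Cl:union-of-planar}: a union of constantly many planar-type graphs on $\tilde n$ vertices has $O(\tilde n)$ edges and hence bounded average degree, so $G$ is $O(1)$-degenerate and therefore $O(1)$-colorable by Claim~\ref{Cl:r-degenerate}; tracking the constants through this edge count yields the stated bound of $49$ colors. Because every step is generic in $\famF^1_M$, the identical construction colors any induced subfamily with the same constant number of colors, which is the robustness needed when $f_2$ is later fed into the product coloring $c_1$.
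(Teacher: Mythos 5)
Your proposal takes a genuinely different route from the paper. The paper's proof of Claim~\ref{cl:f_2} is a short reduction: by Observation~\ref{LIO} the intersection graph of $\famF^1_M$ is an interval overlap graph, so one reruns the machinery of Theorem~\ref{CFint} on it --- partition into distance layers $\famS_0,\ldots,\famS_d$, properly color each $\famS_{i-1}$ with respect to $\famS_i$ with $16$ colors via Steps~3--4 of Proposition~\ref{Prop:Feeding-S_i}, and combine three palettes plus one extra color to get $3\cdot 16+1=49$. You instead run the three-auxiliary-graph construction directly on $\famF^1_M$ against itself, replacing the distance-layer device (whose role there is precisely to make Claim~\ref{tzi}, and hence the degree bound on $G_3$, available) by a new structural observation: since every member crosses $L$, adjacency is comparability in the dominance order on (basepoint, vertical length), so by transitivity the unique left neighbor of a $G_3$-witness is order-minimal and the unique right neighbor is order-maximal, making $G_3$ bipartite between two antichains, each consisting of pairwise disjoint L-shapes. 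This is correct and arguably cleaner; carried through, it exhibits $G$ as a union of three planar graphs and would give $18$ colors, comfortably within the stated $49$.

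The one step that fails as written is the final planarity argument for $G_3$. You propose applying Lemma~\ref{lem:prelim-strings-planar} with $S_1$ consisting of the horizontal parts of the minimal elements together with the vertical parts of the maximal elements; but that lemma requires $S_1$ to be pairwise disjoint, and a minimal element can intersect a maximal one (a minimal $m$ may well be a left neighbor of a maximal $M$, so $h(m)$ crosses $v(M)$). This is exactly the obstruction that Claim~\ref{LPG} removes by reflecting the vertical parts across the baseline into the upper half-plane and extending each witness by its mirror image, after which the two halves of $S_1$ lie in different half-planes and Lemma~\ref{lem:prelim-strings-planar} applies. Taking $\famA$ to be the minimal elements, $\famB$ the maximal ones, and $\famY$ one witness per $G_3$-edge (witnesses are pairwise disjoint by your own transitivity argument), you are literally in the setting of Claim~\ref{LPG}, so the gap closes with a citation rather than new work. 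The remaining points --- that every hyperedge of size at least $2$ contains an edge of $G_1\cup G_2\cup G_3$, and that the construction is hereditary under passing to induced subfamilies --- are fine as you state them.
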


 \begin{proof}

The intersection graph of $\famF^1_M$ is isomorphic to an overlap graph of intervals according to Observation \ref{LIO}. Hence, by the proof of Theorem~\ref{CFint}, it can be properly colored with a constant number of colors. Specifically, if we let $\famS_0,\famS_1, \ldots, \famS_d$ be the partition of $\famF^1_M$ into distance layers as in the proof of Theorem~\ref{CFint}, then by Steps~3,4 of the proof of Proposition~\ref{Prop:Feeding-S_i}, for any $2 \leq i \leq d$, we can properly color $\famS_{i-1}$ with respect to $\famS_i$ with 16 colors. Then we can use these colorings to obtain a proper coloring of the hypergraph $H_2$ arising from $\famF^1_M \times \famF^1_M$ with $3 \cdot 16+1=49$ colors, by the argument of Stage~2 in the proof of Theorem~\ref{CFint}. It is clear that the same argument holds for any induced sub-hypergraph of $H_2$.
 \end{proof} 

Next, we present the coloring $f_3$.
\begin{claim}
\label{cl:f_3}
 There exists a proper coloring $f_3$ of $\famV'_R$ with respect to $\famV_R$ with 4 colors.
 \end{claim}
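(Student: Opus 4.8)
The plan is to follow the auxiliary-graph/planarity technique already used for $G_1$ in the proof of Proposition~\ref{Prop:Feeding-S_i} (see Lemma~\ref{lem:G_1 and G_2 are planar}), reducing the desired proper coloring to a $4$-coloring of a planar graph. First I would pin down the geometric picture. Each $\li\in\famV_R$ lies in $\famF^2_M$, so it crosses the splitting line $L$; since, by the paper's convention, a grounded L-shape has its basepoint at the top-left and a horizontal part extending to the right, an L-shape crossing $L$ must have its basepoint to the left of $L$, and hence its only portion lying to the right of $L$ is a piece of $h(\li)$. Every neighbor $\li'\in\famV'_R\subseteq\famF_R$ is based strictly to the right of $L$, hence is a right neighbor of $\li$ and therefore meets $h(\li)$; since two horizontal parts are parallel (and may be assumed to lie at distinct heights after perturbation), this intersection must be a crossing of the \emph{vertical} part $v(\li')$ with $h(\li)$. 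Thus, writing $H_3$ for the hypergraph on vertex set $\famV'_R$ with hyperedges $\{e_\li:\li\in\famV_R\}$, the hyperedge $e_\li$ consists exactly of those $\li'\in\famV'_R$ whose vertical segment $v(\li')$ crosses the horizontal segment $h(\li)$.

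Next I would build the ``consecutive'' graph $G$ on $\famV'_R$, joining $W_1,W_2$ whenever there is $\li\in\famV_R$ such that $v(W_1)$ and $v(W_2)$ cross $h(\li)$ and are consecutive along $h(\li)$ (no $v(\li')$ crossing $h(\li)$ lies between them). If $|e_\li|\ge 2$, ordering its members along $h(\li)$ exhibits a pair of consecutive crossings, so $e_\li$ contains an edge of $G$; hence any proper coloring of $G$ is a proper coloring of $H_3$. The same recipe applied to any subset of $\famV'_R$ yields the analogous statement for every induced subhypergraph, which is exactly what the downstream Cartesian-product coloring $c_1$ requires.

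Then I would prove $G$ planar via Lemma~\ref{lem:prelim-strings-planar}, just as for $G_1$. Put $S_1=\{v(\li'):\li'\in\famV'_R\}$; these are pairwise disjoint because the L-shapes have distinct basepoints, so their vertical parts sit at distinct $x$-coordinates. For each edge of $G$ take the sub-segment of the relevant $h(\li)$ between the two consecutive crossing points, and let $S_2$ be the family of these sub-segments after a small perturbation that separates the finitely many collinear, endpoint-sharing sub-segments lying on a common $h(\li)$ and, by shifting to distinct heights, those coming from different $\li$. By consecutiveness each perturbed sub-segment still crosses exactly its two intended elements of $S_1$ and no others, and $S_2$ is pairwise disjoint, so Lemma~\ref{lem:prelim-strings-planar} applies and the graph it produces is isomorphic to $G$. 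Finally, $G$ being planar, the Four Color Theorem gives a proper $4$-coloring of $G$, which is the source of the constant $4$ and is the required coloring $f_3$.

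The main obstacle is the bookkeeping in this planarity reduction, of exactly the type already carried out for $G_1$: one must (i) justify that a neighbor $\li'\in\famV'_R$ meets $\li$ only through the transversal crossing $v(\li')\cap h(\li)$, which is where the orientation of the L-shapes and the side of $L$ enter, and (ii) choose the perturbation of the connecting segments so that they become pairwise disjoint while each still crosses precisely its two intended vertical parts without creating spurious crossings. Once these geometric points are settled, invoking Lemma~\ref{lem:prelim-strings-planar} together with the Four Color Theorem completes the proof.
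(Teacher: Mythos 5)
Your proof is correct and follows essentially the same route as the paper: both observe that every intersection between $\li\in\famV_R$ and $\li'\in\famV'_R$ is a crossing of $v(\li')$ with $h(\li)$, set $S_1=v(\famV'_R)$ and $S_2$ to be (perturbed) sub-segments of the horizontal parts between consecutive crossings, invoke Lemma~\ref{lem:prelim-strings-planar} to get planarity, and conclude $4$-colorability. The only cosmetic difference is that you take one sub-segment per consecutive pair while the paper takes one per $\li\in\famV_R$; both yield a planar auxiliary graph whose proper coloring properly colors the hypergraph, and you correctly also verify the hereditary statement needed downstream.
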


 \begin{proof}
 We construct the coloring by using Lemma~\ref{lem:prelim-strings-planar}. We observe that in any intersection of $\li \in \famV_R$ with $\li' \in \famV'_R$, $h(\li)$ intersects $v(\li')$. In the notations of Lemma~\ref{lem:prelim-strings-planar}, we define $S_1=v(\famV'_R)$. To define $S_2$, for any $\li \in \famV_R$ that intersects at least two members of $\famV'_R$, we take some segment of $h(\li)$ between two such consecutive intersections, and add it to $S_2$. After a small perturbation of the constructed sets $S_1,S_2$, we can apply Lemma~\ref{lem:prelim-strings-planar} to assert that the graph arising from $S_1 \times S_2$ is planar, and thus, 4-colorable. It is easy to see that the obtained 4-coloring is a proper 4-coloring of the hypergraph arising from $\famV'_R \times \famV_R$. Furthermore, it is clear that the same also holds for any induced subhypergraph of it.
 \end{proof} 

 The following observation is used to obtain the colorings $f_4$ and $f_6$ below.
 \begin{observation}
 \label{obs:order}
 The L-shapes of $\famF^2_M$ naturally define a total order relation $<$, where  $\li_1 < \li_2$ if and only if the area delimited by $\li_1$, $L$ and the baseline is strictly contained in the area delimited by $\li_2$, $L$ and the baseline. (Recall that the L-shapes of $\famF^2_M$ are pairwise disjoint.) Moreover, if $\li_1 <\li_2 <\li_3$ and an L-shape of $\famV'_L$ intersects $\li_1$ and $\li_3$, it must also intersect $\li_2$.
 \end{observation}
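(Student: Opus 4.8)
The plan is to place $L=\{x=0\}$ and the baseline along $\{y=0\}$, so that every grounded L-shape lives in the lower halfplane $\{y<0\}$. First I would record the one geometric fact that drives everything: since an L-shape has the orientation of the letter~L (a vertical segment with a horizontal segment leaving its bottom to the \emph{right}) and is grounded, it can meet the vertical line $L$ only through its horizontal part, and only when its basepoint lies strictly to the left of $L$. Hence every $\li\in\famF^2_M$ has basepoint $(p_\li,0)$ with $p_\li<0$, crosses $L$ exactly once, and the region it delimits together with $L$ and the baseline is precisely the axis-parallel rectangle $R_\li=[p_\li,0]\times[-\norm{v(\li)},0]$; all these rectangles share the corner $O=L\cap H$.

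For the total-order claim I would show that pairwise disjointness forces the rectangles $R_\li$ to be nested. If two members were incomparable, then after relabeling one L-shape $\li$ would have its basepoint strictly to the right of the other's and yet be strictly deeper; a direct check shows that $v(\li)$ then crosses $h(\li')$ (at the point with $x=p_\li$ and $y=-\norm{v(\li')}$), contradicting disjointness. Thus any two of the rectangles are nested, and since the basepoints are distinct the nesting is strict, so the relation $<$ given by containment of delimited regions is a strict total order.

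For the betweenness claim, assume $\li_1<\li_2<\li_3$, i.e.\ $R_1\subsetneq R_2\subsetneq R_3$, and let $w\in\famV'_L$ meet both $\li_1$ and $\li_3$. The key step is to locate the two contact points relative to $R_2$. I would check that the part of $\li_1$ lying in $\{x<0\}$, with its top endpoint $b(\li_1)$ removed, sits inside the open rectangle $\mathrm{int}(R_2)$, whereas the part of $\li_3$ in $\{x<0\}$ is disjoint from $\overline{R_2}$ (its vertical part is left of $x=p_2$ and its horizontal part is below $y=-\norm{v(\li_2)}$). Because $w$ lies strictly to the left of $L$, its contact points are not the $L$-endpoints of $\li_1,\li_3$, and because basepoints are distinct they are not $b(w)$ either; so $w$ has a point strictly inside $R_2$ and a point strictly outside $\overline{R_2}$.

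Finally I would run a connectivity argument: $w$ is a simple arc that meets the baseline only at $b(w)$ and never meets $L$. The subarc of $w$ joining its inside point to its outside point avoids both endpoints of $w$ (in particular $b(w)$), hence avoids $H$ and $L$ altogether, so the only portion of $\partial R_2$ it can cross is $\li_2$ itself; therefore $w$ meets $\li_2$, as claimed. I expect this last step to be the main obstacle: a naive ``inside meets outside'' argument yields only a crossing of $\partial R_2$, and the real content is to rule out that the crossing happens on the $H$- or $L$-portions of the rectangle boundary — this is exactly where the constraints $w\subseteq\{x<0\}$ and $w\cap H=\{b(w)\}$, together with the distinct-basepoints assumption, are used.
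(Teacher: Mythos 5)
Your argument is correct. Note that the paper states this as an \emph{observation} and offers no proof at all, so there is nothing to compare against; your write-up supplies the missing justification in full. The two substantive points — that pairwise disjointness forces the rectangles $R_\li=[p_\li,0]\times[-\norm{v(\li)},0]$ to be strictly nested (via the explicit intersection point $(p_\li,-\norm{v(\li')})$ of $v(\li)$ with $h(\li')$ in the incomparable case), and that a member of $\famV'_L$ meeting $\li_1$ and $\li_3$ must cross $\partial R_2$ away from $H\cup L$ — are exactly the right ones, and you correctly identify where the hypotheses $w\subseteq\{x<0\}$, $w\cap H=\{b(w)\}$, and distinctness of basepoints enter. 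One small imprecision: the subarc of $w$ joining the two contact points need not avoid \emph{both} endpoints of $w$ (the contact point on $\li_3$ could well be the free end of $h(w)$); what your argument actually needs, and what you do establish, is only that the subarc avoids $b(w)$ and hence avoids $H$. This does not affect the validity of the proof.
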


Among the four proper colorings needed for the CF-coloring $c_1$, the last one to be presented is $f_4$.

\begin{claim}
\label{cl:f_4}
 There exists a proper coloring $f_4$ of $\famV'_L$ with respect to $\famV_L$ with 3 colors.
 \end{claim}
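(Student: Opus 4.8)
The plan is to recognize the hypergraph arising from $\famV'_L \times \famV_L$ as a dual interval hypergraph, and then to color it with $3$ colors by invoking Proposition~\ref{prop:intdual}. The whole difficulty is packaged into Observation~\ref{obs:order}, so once that is in hand the argument is short; the crux is simply to spot that the geometry forces the relevant neighborhoods to be order-convex.

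First I would use Observation~\ref{obs:order}, which puts a total order $<$ on $\famF^2_M$ (and hence on $\famV_L \subseteq \famF^2_M$) and, crucially, guarantees that whenever $\li_1 < \li_2 < \li_3$ in $\famF^2_M$ and some L-shape of $\famV'_L$ meets both $\li_1$ and $\li_3$, it also meets $\li_2$. I would then list the elements of $\famV_L$ as points on a line according to $<$, and for each $W \in \famV'_L$ consider the set $\{\li \in \famV_L : W \cap \li \neq \emptyset\}$. By the order-convexity just quoted, this set is a contiguous block in the order $<$, so it is realized by an interval $I_W$ on the line; it is moreover nonempty, since by definition every $W \in \famV'_L$ is a neighbor of at least one member of $\famV_L$.

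With this identification in place, the hypergraph arising from $\famV'_L \times \famV_L$ --- whose vertices are the L-shapes $W \in \famV'_L$ and whose hyperedge associated with $\li \in \famV_L$ is $\{W \in \famV'_L : W \cap \li \neq \emptyset\}$ --- is exactly the dual hypergraph of the finite multiset of intervals $\{I_W : W \in \famV'_L\}$, the point defining the hyperedge of $\li$ being the position of $\li$ in the order $<$. (Passing to a multiset rather than a set is necessary because distinct $W$'s may meet the same elements of $\famV_L$, which is why I rely on Proposition~\ref{prop:intdual} in its multiset form.) By Proposition~\ref{prop:intdual} this dual hypergraph admits a CF-coloring, and in particular a proper coloring, with $3$ colors, and this coloring is the desired $f_4$. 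Finally, since any subset of $\famV'_L$ maps to a sub-multiset of intervals, the identical argument properly $3$-colors every induced subhypergraph, which is what will be needed when this claim is fed into Lemma~\ref{lem:weakToKCF}. I do not anticipate a genuine obstacle beyond confirming the order-convexity of the neighborhoods, which is precisely the content of Observation~\ref{obs:order}.
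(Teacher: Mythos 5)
Your proposal is correct and follows essentially the same route as the paper: order $\famV_L$ via Observation~\ref{obs:order}, use the order-convexity of neighborhoods to realize the hypergraph arising from $\famV'_L \times \famV_L$ as (a spanning subhypergraph of) the dual hypergraph of a multiset of intervals, and invoke Proposition~\ref{prop:intdual}. The only cosmetic difference is that the paper phrases the identification as an isomorphism onto a \emph{spanning subhypergraph} of the dual hypergraph, which changes nothing since a proper coloring restricts to subhypergraphs.
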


 \begin{proof}
The order relation defined in Observation~\ref{obs:order} on $\famF^2_M$ induces an order relation $\prec$ on $\famV_L$. Let $\ell_1\prec\ldots \prec \ell_s$ be the L-shapes of $\famV_L$. According to Observation~\ref{obs:order} each $\ell\in\famV'_L$ can be mapped to some $[a_1, a_2]$ such that $\ell$ intersects exactly $\ell_{a_1}\prec \ldots \prec \ell_{a_2}$ among the L-shapes of $\famV_L$. Let us call $\famJ$ the resulting multiset of intervals (we distinguish 2 intervals arising from different L-shapes of $\famV'_L$). Clearly, the hypergraph $H$ arising from $\famV'_L \times \famV_L$ is isomorphic to some \emph{spanning subhypergraph} of the dual hypergraph of $\famJ$.  (Note that, in this context, we use the notion of a \emph{spanning subhypergraph} for a hypergraph whose vertex set is the same as that of the original hypergraph and that contains some of the hyperedges of the original hypergraph.) Thus by Proposition~\ref{prop:intdual}, $H$ admits a proper $3$-coloring. It is easy to see that a similar proper $3$-coloring can also be obtained for any subset of $\famV'_L$.


\end{proof}

 In Claims~\ref{cl:f_1},~\ref{cl:f_2},~\ref{cl:f_3}, and~\ref{cl:f_4} we obtained the four proper colorings $f_1-f_4$ and thus proved the existence of the CF-coloring $c_1$ that was defined in the proof-skeleton of Theorem~\ref{thm:L-shapes}. In the following claim we obtain the CF-coloring $c_2$, by passing through two additional temporary CF-colorings, $f_5$ and $f_6$.
 \begin{claim}
\label{cl:c_2}
 There exists a CF-coloring $c_2$ of $\famF^2_M$ with respect to $\famV'_L \cup \famV'_R$ with $O(\log ^2 n)$ colors.
 \end{claim}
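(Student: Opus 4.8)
The plan is to realize $c_2$ as the Cartesian product of two simpler conflict-free colorings, each using only $O(\log n)$ colors: a coloring $f_5$ of $\famF^2_M$ that is conflict-free with respect to $\famV'_L$, and a coloring $f_6$ that is conflict-free with respect to $\famV'_R$. Given these, I set $c_2(\li)=(f_5(\li),f_6(\li))$, which uses at most $O(\log n)\cdot O(\log n)=O(\log^2 n)$ colors, drawn from a palette disjoint from that of $c_1$. To check that $c_2$ is conflict-free with respect to $\famV'_L\cup\famV'_R$, fix $\li'\in\famV'_L$ and let $E\subseteq\famF^2_M$ be the set of L-shapes meeting $\li'$. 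Since $f_5$ is conflict-free with respect to $\famV'_L$, some $\li_0\in E$ has an $f_5$-color occurring exactly once in $E$; its product color then also occurs exactly once in $E$, because the first coordinate alone already separates it from every other element of $E$. The same reasoning with $f_6$ settles every hyperedge coming from $\famV'_R$, so $c_2$ is conflict-free with respect to the whole family $\famV'_L\cup\famV'_R$.

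It remains to construct $f_5$ and $f_6$, and this is where Observation~\ref{obs:order} enters. Writing the chain $\famF^2_M$ provided by that observation as $\li_1<\li_2<\cdots$, the ``moreover'' part states exactly that each L-shape of $\famV'_L$ meets a contiguous block $\li_{a_1},\dots,\li_{a_2}$ of the chain. Identifying $\famF^2_M$ with $|\famF^2_M|$ points placed on a line in this order, the hypergraph arising from $\famF^2_M\times\famV'_L$ is therefore a spanning subhypergraph of the discrete interval hypergraph $I(|\famF^2_M|)$. By Proposition~\ref{prop:discrete} this interval hypergraph admits a conflict-free coloring with $\lfloor\log|\famF^2_M|\rfloor+1=O(\log n)$ colors, and any such coloring restricts to a conflict-free coloring of every spanning subhypergraph; this yields $f_5$. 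The coloring $f_6$ is produced identically, using the analogous interval property for $\famV'_R$ --- the mirror image of Observation~\ref{obs:order} obtained by interchanging the two sides of the splitting line $L$.

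The only genuinely geometric point, and hence the step I expect to be the main obstacle, is this interval property: that the members of $\famF^2_M$ met by a given L-shape of $\famV'_L$ (respectively $\famV'_R$) form a contiguous segment of the chain. This is a Jordan-curve argument --- the regions cut off by the elements of $\famF^2_M$ together with $L$ and the baseline are nested in the order $<$, so a single connected curve lying entirely on one side of $L$ that reaches both the innermost and the outermost of three nested regions is forced to cross the boundary of the region in between --- and it is precisely what Observation~\ref{obs:order} records. Once it is available, the reduction to interval hypergraphs, the invocation of Proposition~\ref{prop:discrete}, and the product construction are all routine and together contribute only the claimed $O(\log^2 n)$ colors.
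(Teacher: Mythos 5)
Your overall architecture (product of two $O(\log n)$-color CF-colorings, one per side) and your treatment of the left side both match the paper. The gap is in your claim that $f_6$ for $\famV'_R$ is ``produced identically, using \dots{} the mirror image of Observation~\ref{obs:order}.'' No such mirror image exists, because the L-shapes are not symmetric under reflection about the splitting line $L$: every L-shape has its vertical part on the left and its horizontal part extending to the right. Consequently an element $\li'\in\famV'_L$ (lying strictly left of $L$) meets an element of $\famF^2_M$ somewhere inside the nested regions bounded by $L$ and the baseline, which is what drives the Jordan-curve contiguity argument; but an element $\li'\in\famV'_R$ lies strictly to the \emph{right} of $L$, entirely outside all of those regions, and can only meet $\li\in\famF^2_M$ where $v(\li')$ crosses the portion of $h(\li)$ to the right of $L$. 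Whether that happens depends on two independent parameters of $\li$: the depth of $h(\li)$ and how far $h(\li)$ extends to the right of $L$ --- and the nesting order controls only the first. Concretely, take $\li_1<\li_2<\li_3$ in $\famF^2_M$ where $h(\li_1)$ and $h(\li_3)$ extend far to the right of $L$ but $h(\li_2)$ stops just after $L$; a sufficiently deep $v(\li')$ placed beyond the right endpoint of $h(\li_2)$ meets $\li_1$ and $\li_3$ but not $\li_2$. So the hyperedges induced by $\famV'_R$ are not contiguous blocks of the chain (they are two-sided ``dominance'' ranges), and the reduction to the discrete interval hypergraph fails on that side.

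The paper therefore treats the two sides asymmetrically: the interval-hypergraph reduction via Observation~\ref{obs:order} and Proposition~\ref{prop:discrete} is used only for $\famF^2_M\times\famV'_L$, while for $\famF^2_M\times\famV'_R$ it applies the planarity lemma (Lemma~\ref{lem:prelim-strings-planar}, with $S_1=h(\famF^2_M)$ and $S_2$ consisting of subsegments of the $v(\li')$ between consecutive intersections) to obtain a proper $4$-coloring of every induced subhypergraph, and then invokes Lemma~\ref{lem:weakToKCF} to boost this to a CF-coloring with $O(\log n)$ colors. Your product step and the verification that a product of CF-colorings is CF on the union of the two hyperedge families are correct and agree with the paper; you need to replace your construction of $f_6$ with an argument of this second kind (or any other valid $O(\log n)$-color CF-coloring for the right side).
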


 \begin{proof}
 As in the proof of Claim~\ref{cl:f_3}, we use Lemma~\ref{lem:prelim-strings-planar}. Let $H_1$ be the hypergraph arising from $\famF^2_M \times \famV'_R$. Recall again that in any intersection of $\li \in \famF^2_M$ with $\li' \in \famV'_R$, $h(\li)$ intersects $v(\li')$. In the notations of Lemma~\ref{lem:prelim-strings-planar}, we let $S_1=h(\famF^2_M)$, and for any $\li' \in \famV'_R$ that intersects at least two elements of $\famF^2_M$, we take a segment of $v(\li')$ between two such consecutive intersections and add it to $S_2$. Exactly as in the proof of Claim~\ref{cl:f_3}, we can obtain a proper $4$-coloring of any induced subhypergraph of $H_1$. By Lemma~\ref{lem:weakToKCF}, this implies that there exists a CF-coloring $f_5$ of $H_1$ with $O(\log n)$ colors.

 As for the hypergraph $H_2$ arising from $\famF^2_M \times \famV'_L$, we use Observation~\ref{obs:order} and get that $H_2$ is isomorphic to the discrete interval hypergraph on $|\famF^2_M|$ vertices (defined in Section~\ref{sec:prelim}). Hence, by Proposition~\ref{prop:discrete}, there exists a CF-coloring $f_6$ of $H_2$ with $O(\log n)$ colors.

 Let $c_2=f_5 \times f_6$ be the Cartesian product of $f_5$ and $f_6$. Then $c_2$ is a CF-coloring of $\famF^2_M $ with respect to $\famV'_L \cup \famV'_R$ with $O(\log ^2 n)$ colors.
 \end{proof}

\section{$k$-CF Coloring of String Graphs and General Hypergraphs}
\label{sec:kcf}

The CF-chromatic number of many classes of string graphs approaches the general upper bound of $O(\sqrt{n})$. Indeed, take, for example, $m$ pairwise intersecting frames such that no three of them intersect at the same point. For any intersection point of a pair of frames, add a tiny frame that intersects only that pair. Since any two original frames are the neighborhood of some tiny frame, they must get different colors in any CF-coloring of the corresponding intersection graph. Thus we obtain a graph with $n=m+\binom{m}{2}$ vertices whose CF-chromatic number is $m=\Theta(\sqrt n)$. A similar construction holds, of course, for L-shapes, circles, etc.

In this section, we obtain polylogarithmic bounds on the $k$-CF-chromatic numbers of several classes of string graphs whose CF-chromatic number approaches $O(\sqrt{n})$, such as intersection graphs of frames and L-shapes. Before considering specific classes, we obtain in Section~\ref{sec:general} a tight bound on the $k$-CF-chromatic number of arbitrary hypergraphs (and thus, of arbitrary graphs as well). Our bound generalizes a result of Pach and Tardos~\cite{CFPT09}, who showed that for any hypergraph $H$ with at most $m$ hyperedges, $\chi_{\mathrm{cf}}(H) = O(\sqrt{m})$. In addition to its intrinsic interest, our result serves as a benchmark for the upper bounds we obtain for specific classes of string graphs in the rest of the section.

We then prove Theorem~\ref{tlcs} -- a general method that allows bounding the $k$-CF-chromatic numbers of various classes of string graphs. We use it to prove Theorem~\ref{thm:framesLshapes}, which asserts a tight bound of $O(\log n)$ on the $k$-CF-chromatic numbers of the intersection graphs of frames (for all $k \geq 4$) and of L-shapes (for all $k \geq 2$). As another application of Theorem~\ref{tlcs}, we prove Theorem~\ref{thm:intro-string-graphs-UB}, which allows bounding the CF-chromatic number of any string graph in terms of its chromatic number.

\subsection{$k$-Conflict-Free coloring for arbitrary hypergraphs}
\label{sec:general}

In this section, we prove Theorem~\ref{thm:intro-kcf-hypergraphs}, which asserts that for any hypergraph $H$ with
$n$ vertices and $m$ hyperedges, and for any $k > 1$, we have $\chi_{\mathrm{k-cf}}(H) = O(m^{\frac{1}{k+1}}\log^{\frac{k}{k+1}} n)$,
and that this bound is tight (even with respect to string graphs) up to logarithmic factors.

We shall use the following lemma that bounds the number of colors needed in a $(k+1)$-weak coloring of a hypergraph $H$ in terms of the maximum degree $\Delta(H)$.
\begin{lemma}\label{lem:non-mono-maxdegree}
Let $H=(V,\E)$ be a hypergraph, and let $\Delta$ denote the maximum degree of $H$. For any $k>1$, there exists a $(k+1)$-weak coloring of $V$ with $O(\Delta^{\frac{1}{k}})$ colors.
\end{lemma}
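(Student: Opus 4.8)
The plan is to color the vertices of $H$ independently and uniformly at random with a palette of $r$ colors, where $r = O(\Delta^{1/k})$, and to use the Lov\'asz Local Lemma to show that with positive probability no hyperedge of size $\geq k+1$ is monochromatic. For each hyperedge $S \in \E$ with $\card{S} \geq k+1$, let $A_S$ be the bad event that all vertices of $S$ receive the same color. Since the colors are chosen independently and uniformly, $\Prob{A_S} = r^{1-\card{S}}$, which is at most $r^{-k}$ because $\card{S} \geq k+1$, and which only gets smaller as $\card{S}$ grows. The event $A_S$ is determined by the colors of the vertices of $S$, hence it is mutually independent of all $A_{S'}$ with $S' \cap S = \emptyset$; its dependency neighborhood consists of the bad hyperedges meeting $S$, of which there are at most $\card{S}\,\Delta$ since each vertex of $S$ lies in at most $\Delta$ hyperedges.

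This last observation is the crux of the difficulty: large hyperedges are extremely unlikely to be monochromatic but have a large dependency degree, so the \emph{symmetric} local lemma, which uses a single worst-case probability against a single worst-case degree, cannot be applied directly when hyperedge sizes are unbounded. I would therefore invoke the general (asymmetric) local lemma with the size-dependent weights $x_S = y^{\card{S}}$ for a parameter $y \in (0,1)$ to be fixed. The purpose of this choice is that both the failure probability and the relevant product scale like a fixed base raised to the power $\card{S}$, so the local lemma inequality becomes homogeneous of degree $\card{S}$ and the binding constraint comes only from the smallest admissible size $\card{S} = k+1$. Grouping the hyperedges meeting $S$ by a vertex of $S$ they contain (and overcounting), and using that every bad hyperedge $S'$ has $x_{S'} = y^{\card{S'}} \leq y^{k+1}$, yields
\[
\prod_{S' \cap S \neq \emptyset}(1 - x_{S'}) \;\geq\; (1 - y^{k+1})^{\card{S}\,\Delta}.
\]
The local lemma condition $\Prob{A_S} \leq x_S \prod_{S' \cap S \neq \emptyset}(1-x_{S'})$ then reduces to $r^{1-\card{S}} \leq y^{\card{S}}(1-y^{k+1})^{\card{S}\,\Delta}$, equivalently to $r \leq \beta^{\card{S}}$ with $\beta = r\,y\,(1-y^{k+1})^{\Delta}$; once $\beta \geq 1$ the worst case is indeed $\card{S}=k+1$.

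Setting $t = y^{k+1}$, the worst case $\card{S}=k+1$ rearranges to $r^{k}\,t\,(1-t)^{(k+1)\Delta} \geq 1$, and it remains to optimize over $t$. The choice $t = \frac{1}{1+(k+1)\Delta}$ maximizes $t(1-t)^{(k+1)\Delta}$ and makes it $\Theta\!\left(\frac{1}{k\Delta}\right)$, so the condition holds as soon as $r^{k} = \Omega(k\Delta)$, that is $r = O\bigl((k\Delta)^{1/k}\bigr) = O(\Delta^{1/k})$ for fixed $k$; a one-line check shows that with this choice $\beta \to \infty$, so $\beta \geq 1$ and the reduction to $\card{S}=k+1$ is legitimate. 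The local lemma then guarantees a coloring avoiding every $A_S$, which is precisely a $(k+1)$-weak coloring with $O(\Delta^{1/k})$ colors. I expect the main obstacle to be conceptual rather than computational, namely recognizing that the probability/degree tension for large hyperedges forces the asymmetric local lemma and choosing the weights $x_S = y^{\card{S}}$ so that the inequality collapses to a single scalar condition at $\card{S}=k+1$; the remaining work is the elementary maximization of $t(1-t)^{(k+1)\Delta}$ and the bookkeeping of (fixed-$k$) constants, with small values of $\Delta$ handled trivially since there $O(\Delta^{1/k}) = O(1)$ colors suffice.
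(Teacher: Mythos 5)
Your proof is correct, and its core (uniform random coloring plus the Lov\'asz Local Lemma, with the threshold $r^k \gtrsim e[(k+1)\Delta+1]$) is the same as the paper's. The genuine difference is in how the two arguments cope with hyperedges of size larger than $k+1$. You identify the probability/degree tension for large hyperedges as ``the crux of the difficulty'' and resolve it with the asymmetric local lemma and the weights $x_S=y^{\card{S}}$, which does work: your reduction to the single scalar condition $r^k\,t\,(1-t)^{(k+1)\Delta}\ge 1$ at $\card{S}=k+1$ is sound, the optimization of $t(1-t)^{(k+1)\Delta}$ is right, and the check that $\beta\ge 1$ goes through for a suitable constant in $r=O(\Delta^{1/k})$. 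The paper instead dissolves the difficulty with a one-line combinatorial reduction before any probability enters: discard hyperedges of size $<k+1$ and replace every hyperedge of size $>k+1$ by an arbitrary $(k+1)$-subset of it (if the subset is non-monochromatic, so is the superset, and the maximum degree does not increase). After this truncation all hyperedges have size exactly $k+1$, every bad event has probability exactly $r^{-k}$ and dependency degree at most $(k+1)\Delta$, and the symmetric local lemma applies directly. So what your approach buys is avoiding any modification of the hypergraph, at the cost of the weight bookkeeping; what the paper's buys is that the ``conceptual obstacle'' you describe never arises. Both yield the same bound.
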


\begin{proof}
We first observe that w.l.o.g. we can assume that all hyperedges have cardinality $k+1$. Indeed, we can discard hyperedges of cardinality strictly less than $k+1$,
as this will not affect the desired coloring property. Moreover, we can replace any hyperedge with cardinality strictly more than $k+1$ with one of its subsets of cardinality $k+1$, since if such a subset is not monochromatic for a given coloring, then so is the superset. Note that in this process we might replace many hyperedges with one single hyperedge.
Obviously, the resulting hypergraph has maximum degree at most $\Delta$.

Next, we proceed with a probabilistic proof that will make use of the Lov\'{a}sz' Local Lemma (see, e.g., \cite{ALON00}).
We will use $M= A\Delta^{\frac{1}{k}}$ colors for some appropriate constant $A$ to be determined later. We color randomly, uniformly, and independently, each vertex $v \in V$ with a color so that each color is assigned to $v$ with probability $p = \frac{1}{M}$. Notice that a bad event $B_S$ that a given hyperedge $S \in \E$ is monochromatic happens with probability $p^k$. Note also that for every two hyperedges $S_1,S_2$ such that $S_1 \cap S_2 = \emptyset$, the corresponding events $B_{S_1}$ and $B_{S_2}$ are independent.
Thus for every hyperedge $S$, the event $B_S$ is independent of all but at most $(k+1)\Delta$ other events, as $S$ has a non-empty intersection with at most $(k+1)\Delta$ hyperedges.
We choose $A$ so that $ep^k[(k+1)\Delta +1] < 1$, and thus, $\frac{e[(k+1)\Delta +1]}{A^k\Delta} < 1$. (For example, $A =13$ is sufficient.)
Hence, by the symmetric version of the Lov\'{a}sz' Local Lemma, we have that:
$$
Pr[\bigcup_{S \in \E} B_S] < 1.
$$
In particular, this means that there exists a coloring for which none of the bad events happens. This completes the proof of the lemma.
\end{proof}

We are now ready to prove Theorem~\ref{thm:intro-kcf-hypergraphs}. Let us recall its statement.

\mn \textbf{Theorem~\ref{thm:intro-kcf-hypergraphs}}. Let $H=(V,\E)$ be an arbitrary hypergraph with $n$ vertices and $m$ hyperedges. For any $k > 1$, $$\chi_{\text{\textup{k-cf}}}(H) = O(m^{\frac{1}{k+1}}\log^{\frac{k}{k+1}} n).$$

\mn

\begin{proof}
Let $\Delta$ be an integer parameter to be determined later. Similarly to the proof of the upper bound on $\chi_{\mathrm{cf}}(H)$ for general hypergraphs by Cheilaris~\cite{CheilarisCUNYthesis2009} (presented in~\cite{CFPT09}), we start by iteratively coloring the vertices contained in more than $\Delta$ hyperedges.
Each time we find such a vertex, we color it with a unique color that we will not use again. We then remove this vertex together with all hyperedges containing it.
Note that in each step we remove at least $\Delta$ hyperedges so that the total number of colors we use is at most $\frac{m}{\Delta}$.

We are left with a hypergraph with at most $n$ vertices and at most $m$ hyperedges such that each vertex belongs to at most $\Delta$ hyperedges.
By Lemma~\ref{lem:non-mono-maxdegree}, this hypergraph admits a $(k+1)$-weak coloring with $O(\Delta^{\frac{1}{k}})$ colors. In the same fashion, any induced subhypergraph of $H$ admits such a $(k+1)$-weak coloring, and thus, by Lemma~\ref{lem:weakToKCF},
we can $k$-CF color this hypergraph using additional $O(\Delta^{\frac{1}{k}}\log n)$ colors.
The total number of colors we used is at most $O(\frac{m}{\Delta}+ \Delta^{\frac{1}{k}}\log n)$. Choosing $\Delta=\frac{m^{\frac{k}{k+1}}}{\log^{\frac{k}{k+1}} n}$, we obtain the asserted bound.
\end{proof}

As the neighborhood hypergraph of a graph on $n$ vertices has at most $n$ hyperedges, the following
corollary is immediate.

\begin{corollary}\label{cor:k-cf-graphs}
For any graph $G$ with $n$ vertices, $\kpn(G) =  O(n^{\frac{1}{k+1}}\log^{\frac{k}{k+1}} n)$.
\end{corollary}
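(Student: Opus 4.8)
The plan is to derive this directly from Theorem~\ref{thm:intro-kcf-hypergraphs} by applying it to the correct hypergraph. Recall that a punctured $k$-CF-coloring of $G=(V,E)$ is, by definition, precisely a $k$-CF-coloring of its neighborhood hypergraph $H_G=(V,\E)$, whose vertex set is $V$ and whose hyperedges are the punctured neighborhoods $N_G(v)$ for $v \in V$. Consequently $\kpn(G)=\chi_{\text{\textup{k-cf}}}(H_G)$, and it suffices to bound the latter quantity.

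First I would record the two relevant parameters of $H_G$. It has exactly $|V|=n$ vertices. Its hyperedges are indexed by the vertices of $G$, since each vertex $v$ contributes the single hyperedge $N_G(v)$; hence $H_G$ has at most $n$ hyperedges. (It may well have fewer: distinct vertices can share the same neighborhood, and isolated vertices contribute empty hyperedges that impose no constraint and may simply be discarded.) Thus $H_G$ is a hypergraph on $n$ vertices with some number $m \le n$ of hyperedges.

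Now I would invoke Theorem~\ref{thm:intro-kcf-hypergraphs} on $H_G$ with the parameter choice $m=n$. For any fixed $k>1$ the theorem yields $\chi_{\text{\textup{k-cf}}}(H_G)=O(n^{\frac{1}{k+1}}\log^{\frac{k}{k+1}}n)$; since the bound in the theorem is monotone increasing in the number of hyperedges, replacing the true count $m\le n$ by $n$ only inflates the estimate, so the bound is legitimate. Combining this with the identity $\kpn(G)=\chi_{\text{\textup{k-cf}}}(H_G)$ gives the asserted inequality $\kpn(G)=O(n^{\frac{1}{k+1}}\log^{\frac{k}{k+1}}n)$.

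There is essentially no obstacle here: the only thing that must be checked is the counting observation that a graph on $n$ vertices gives rise to at most $n$ neighborhood hyperedges, which is immediate from the definition of the neighborhood hypergraph. All the real content of the corollary is contained in Theorem~\ref{thm:intro-kcf-hypergraphs}; the corollary is simply the specialization $m=n$ of that theorem to neighborhood hypergraphs, and in particular it inherits the same Lov\'asz Local Lemma plus weak-coloring proof strategy used there.
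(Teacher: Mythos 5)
Your proposal is correct and follows exactly the paper's route: the paper also obtains the corollary by observing that the neighborhood hypergraph of a graph on $n$ vertices has at most $n$ hyperedges and then applying Theorem~\ref{thm:intro-kcf-hypergraphs} with $m=n$. Your additional remarks on monotonicity of the bound and on discarding empty hyperedges are harmless elaborations of the same immediate specialization.
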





The following proposition asserts that the upper bounds of Theorem~\ref{thm:intro-kcf-hypergraphs} and of Corollary~\ref{cor:k-cf-graphs} are tight up to a logarithmic factor.

\begin{proposition}\label{GBONCN}
Let $k\geq 1$ and let $G=(V,E)$ be any graph on $t\geq k+2$ vertices. Define the graph $G'$ by adding to $G$ a vertex $v_K$ for each $(k+1)$-tuple $K\subset V$ and connecting $v_K$ to all vertices of $K$. Let $H$ be the neighborhood hypergraph of $G'$. Then  $\chi_{\text{\textup{k-cf}}}(H) = \kpn(G')\geq \frac{t}{k}=\Omega( n^{\frac{1}{k+1}})$, where $n=\binom{t}{k+1}+t$ is the total number of vertices of $G'$ (and of $H$).
\end{proposition}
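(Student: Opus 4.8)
The plan is to extract a lower bound purely from the gadget vertices $v_K$, whose presence pins down the color-class sizes on the ground set $V$. The key structural fact is that for each $(k+1)$-subset $K \subseteq V$ we have $N_{G'}(v_K) = K$ exactly, and $\card{K} = k+1$. So I would begin by analyzing the $k$-CF requirement at a single gadget vertex: for any coloring $c$ of $G'$ and any $(k+1)$-subset $K \subseteq V$, the neighborhood $K$ contains a color appearing between $1$ and $k$ times \emph{if and only if} $K$ is not monochromatic under $c$. Indeed, if $K$ is monochromatic then its single color occurs $k+1 > k$ times while every other color occurs zero times, so the $k$-CF condition fails; conversely, if $K$ carries at least two colors then, since it has only $k+1$ vertices, every color present on $K$ occupies at most $k$ of them, so the condition holds.

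Next I would globalize this observation. Because the gadget vertices $\{v_K\}$ range over \emph{all} $(k+1)$-subsets of $V$, any valid $k$-CF-coloring $c$ of $H$ (equivalently of $G'$) must make every $(k+1)$-subset of $V$ non-monochromatic; this is precisely the statement that no color class of $c$ meets $V$ in more than $k$ vertices. For the lower bound I only need these constraints, so the neighborhoods of the original vertices of $V$ may be ignored. Now a pigeonhole count finishes this stage: partitioning the $t$ vertices of $V$ into color classes each of size at most $k$ requires at least $\lceil t/k \rceil \geq t/k$ distinct colors, all of which are colors used by $c$. Hence $\kpn(G') \geq t/k$.

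Finally I would convert this into the stated dependence on $n$. For a fixed $k$ we have $n = \binom{t}{k+1} + t = \Theta(t^{k+1})$, so that $t = \Omega(n^{\frac{1}{k+1}})$ and therefore $t/k = \Omega(n^{\frac{1}{k+1}})$, giving the claim. I do not expect any substantive obstacle here: the argument is elementary and avoids exhibiting any coloring. The only step requiring genuine care is the local equivalence in the first paragraph between the $k$-CF condition at $v_K$ and the non-monochromaticity of $K$, as this is exactly where the choice $\card{K} = k+1$ (so that a non-monochromatic neighborhood automatically has maximum color multiplicity $\le k$) is used; with a larger neighborhood the equivalence would break, which is why the construction attaches gadgets to $(k+1)$-tuples specifically.
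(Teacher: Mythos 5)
Your proposal is correct and follows essentially the same route as the paper: both arguments observe that since $N_{G'}(v_K)=K$ has exactly $k+1$ vertices, a monochromatic $(k+1)$-tuple would violate the $k$-CF condition at $v_K$, so every color class meets $V$ in at most $k$ vertices, and the bound follows by pigeonhole together with $n=\Theta(t^{k+1})$. The paper's version is slightly terser (it only uses the ``monochromatic implies failure'' direction and carries out the $n$-versus-$t$ estimate with explicit constants), but there is no substantive difference.
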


\begin{proof}
In any $k$-CF-coloring of $H'$,  a color can appear in $V$ at most $k$ times. (Otherwise, some $(k+1)$-tuple $K\subset V$  is monochromatic, and thus, the neighborhood of $v_K$ is also monochromatic, contradicting the definition of a $k$-CF-coloring.) Hence, $\chi_{\text{\textup{k-cf}}}(H) = \kpn(G')\geq t/k$. 
Note that $$n=t+\binom{t}{k+1}\leq 2\binom{t}{k+1} \leq 2\frac{t^{k+1}}{(k+1)!}\leq 2\frac{t^{k+1}}{e(\frac{k+1}{e})^{k+1}} \leq \left( \frac{e t}{k+1} \right)^{k+1},$$
where the first inequality holds, since $t \geq k+2$. Therefore,
$$\kpn(G) \geq \frac{t}{k} \geq \frac{1}{e} \frac{et}{k+1} \geq \frac{1}{e} n^{\frac{1}{k+1}} = \Omega \left(n^{\frac{1}{k+1}} \right),$$
as asserted.
\end{proof}

The lower bound presented in Proposition~\ref{GBONCN} also holds for string graphs, as it is easy to see that the construction described in the proof of Proposition~\ref{GBONCN} can be implemented by curves. In fact, the following claim shows that the upper bounds of Theorem~\ref{thm:intro-kcf-hypergraphs} and of Corollary~\ref{cor:k-cf-graphs} are tight up to logarithmic factors, even for a very specific class of string graphs.
\begin{definition}
An \emph{interval filament graph} is an intersection graph of curves defined as a strictly positive function on a closed interval, with zeros at the endpoints; that is, curves of the form $\{(t,f(t))\}_{a \leq t \leq b}$, where $f(a)=f(b)=0$ and $f(x)>0$ for all $a<x<b$. (Note that the endpoints of the interval may differ for different curves.)
\end{definition}

Let $\famI\famF^k(n)$ denote the largest $k$-CF chromatic number of a family $\famI\famF$ of $n$ interval filaments.
\begin{proposition}
\label{IntFil}
For any $k>1$, we have $\Omega (n^{\frac{1}{k+1}}) \leq \famI\famF^k(n) \leq O(n^{\frac{1}{k+1}}\log^{\frac{k}{k+1}} n)$. For $k=1$, $\famI\famF^1(n)=\Theta {(\sqrt n)}$.
\end{proposition}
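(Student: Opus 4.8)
The plan is to derive the two upper bounds immediately from the general results of this section, and to obtain the matching lower bounds by realizing the extremal graph of Proposition~\ref{GBONCN} as an interval filament graph.

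For the upper bounds, I would note that an interval filament graph on $n$ curves is a graph on $n$ vertices, so its neighborhood hypergraph has at most $n$ hyperedges. For $k>1$, Corollary~\ref{cor:k-cf-graphs} (i.e. Theorem~\ref{thm:intro-kcf-hypergraphs} with $m=n$) gives $\kpn(G)=O(n^{\frac{1}{k+1}}\log^{\frac{k}{k+1}}n)$, which is exactly the claimed upper bound. For $k=1$ the relevant estimate is the classical bound $\cf(G)=O(\sqrt n)$ of Cheilaris and of Pach--Tardos, yielding the upper bound $O(\sqrt n)$. For the lower bounds it suffices to show that the graph $G'$ of Proposition~\ref{GBONCN} can be drawn with interval filaments in such a way that (a)~each selector vertex $v_K$ has neighborhood exactly the $(k+1)$-set $K$ of base vertices, and (b)~the selectors are pairwise disjoint (hence form an independent set). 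Once this realization is available, Proposition~\ref{GBONCN} applies verbatim and gives $\kpn\geq t/k=\Omega(n^{\frac{1}{k+1}})$ with $n=t+\binom{t}{k+1}$; taking $k+1=2$ also produces the $\Omega(\sqrt n)$ lower bound for $k=1$. Note that since the argument of Proposition~\ref{GBONCN} only uses $N(v_K)=K$ and $|K|=k+1$, edges \emph{among} the base curves are harmless, so I am free to choose the base graph to be whatever is geometrically convenient.

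The heart of the matter, and the step I expect to be the main obstacle, is the geometric realization. The difficulty is that interval filaments are graphs of positive functions vanishing at the endpoints of their interval (``arches''): arches over partially overlapping intervals are \emph{forced} to cross, arches over disjoint intervals never cross, and nested arches cross iff the inner one pokes above the outer one. Since ``poking above'' is monotone in height, the naive attempt---base arches over a common interval and a selector that rises above a chosen threshold---can only realize ``threshold'' neighborhoods, and moreover two selectors realizing incomparable $(k+1)$-sets over a common interval must themselves cross, destroying independence. The resolution is to give each selector its own disjoint $x$-window. Concretely, fix a long interval $[A,B]$ and place in its interior $N=\binom{t}{k+1}$ pairwise disjoint small windows $W_1,\dots,W_N$, one per $(k+1)$-subset $K_1,\dots,K_N$. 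Let each base curve $b_i$ be an arch over $[A,B]$ whose height over window $W_j$ is ``low'' (say $1$) if $i\in K_j$ and ``high'' (say $3$) if $i\notin K_j$, with these values joined by a continuous positive function vanishing at $A$ and $B$. Let each selector $v_{K_j}$ be a small arch supported in $W_j$ with peak height $2$.

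With this construction, over $W_j$ the selector $v_{K_j}$ starts at $0$ below every base curve, rises above every low-band curve (crossing each, by the intermediate value theorem) and stays strictly below every high-band curve, so $N(v_{K_j})=K_j$ exactly; and because the selectors have disjoint supports they are pairwise disjoint, giving properties~(a) and~(b). The key point that makes arbitrary $(k+1)$-subsets realizable despite the function-graph restriction is that each base curve may carry an \emph{independent} low/high pattern across the windows, so the local height-order of the base curves inside a given window can be prescribed freely. The remaining verifications are routine: base--base crossings are irrelevant as noted above, and the band transitions between windows create no spurious selector intersections since every selector lives inside a single window. Assembling this with Proposition~\ref{GBONCN} and the computation $n=t+\binom{t}{k+1}$, $t/k=\Omega(n^{\frac{1}{k+1}})$ completes the lower bounds, and combined with the upper bounds above establishes the proposition (with the $\Theta(\sqrt n)$ sharpening at $k=1$).
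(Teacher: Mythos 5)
Your proposal is correct and follows essentially the same route as the paper: the upper bounds come from Corollary~\ref{cor:k-cf-graphs} (and the classical $O(\sqrt n)$ bound for $k=1$), and the lower bound comes from realizing the construction of Proposition~\ref{GBONCN} with interval filaments in which the selector curves occupy pairwise disjoint sub-intervals and each base curve passes low or high over each selector's window according to membership in the corresponding $(k+1)$-set. The paper's realization differs only cosmetically (nested base intervals and half-circle selectors, rather than a common base interval with height bands $1/2/3$), so there is nothing substantive to add.
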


\begin{proof}
Let $I_1 \subset \ldots  \subset I_t$  be closed intervals and $\{I'_{\{s_1,\ldots, s_{k+1}\}}:1 \leq  s_i \leq t, s_i \neq s_j$ for $ i\neq j\}$  be disjoint closed intervals contained in $I_1$ and indexed by  $(k+1)$-tuples representing $(k+1)$-subsets of $\{I_1, \ldots, I_t \}$. For each $(k+1)$-tuple $\{s_1,\ldots, s_{k+1}\}$ as above define the interval filament $F(I'_{\{s_1,\ldots, s_{k+1}\}})$ by a half circle centered in the middle of $I'_{\{s_1,\ldots, s_{k+1}\}}$ and contained in the upper halfplane. For each $1 \leq i \leq t$ define $F(I_i)$ to be a curve in the upper halfplane connecting the endpoints of $I_i$ and such that $F(I_i)$ intersects exactly those $F(I'_{\{s_1,\ldots, s_{k+1}\}})$ for which $i\in\{s_1,\ldots, s_{k+1}\}$ and is otherwise above the interval filaments $ F(I'_{\{s_1,\ldots, s_{k+1}\}})$. Clearly, we are in the setting of Proposition \ref{GBONCN}, which implies $\famI\famF^k(n)=\Omega(n^{\frac{1}{k+1}})$. The upper bound for $k>1$ follows from Corollary \ref{cor:k-cf-graphs}, and for $k=1$ from the general upper bound of~\cite{CFPT09} on the CF-chromatic numbers of hypergraphs.
\end{proof}

\subsection{k-CF-coloring of specific classes of string graphs}
\label{sec:string1}

Propositions~\ref{GBONCN} and~\ref{IntFil} show that, in general, the $k$-CF-chromatic number of a string graph may be as large as $\Omega(n^{\frac{1}{k+1}})$.
In this section, we show that for a large class of string graphs, $\kpn(G)$ is much lower, and can be bounded by $O(\log n)$, even for very small values of $k$. We prove a general result that allows such bounds to be obtained and then use it to prove Theorem~\ref{thm:framesLshapes}.

\begin{figure}[tb]
\label{fig:OBJ}
\begin{center}
\scalebox{0.6}{
\includegraphics{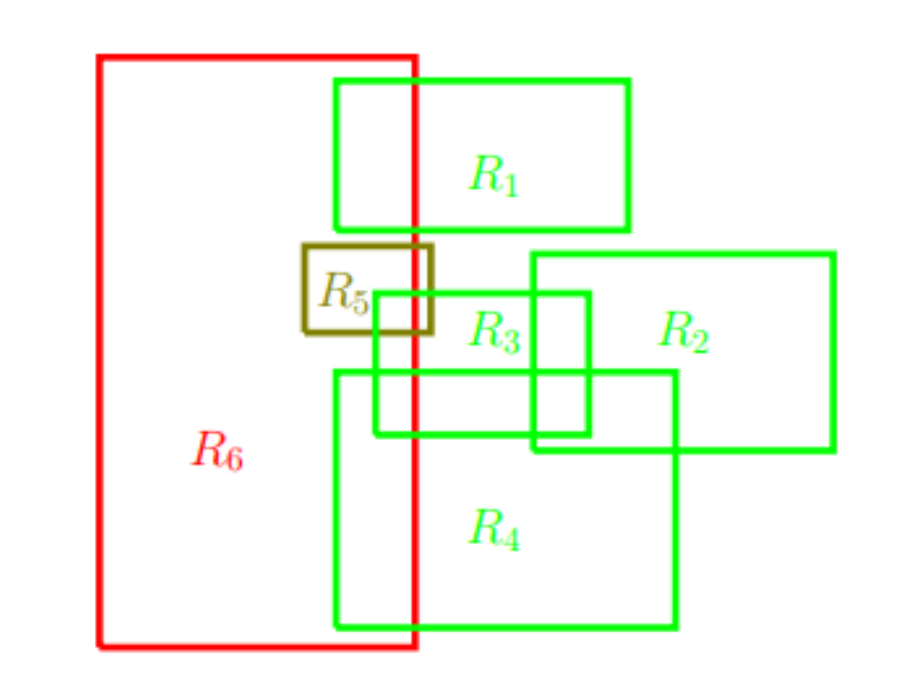}
} \caption{The hypergraph $H=(V,\famE)$, where $V=\{R_1,R_2,R_3,R_4,R_5\}$ and $\famE=\{\{R_3\},\{ R_1,R_3,R_4,R_5\}\}$ is the hypergraph arising from $\famF \times \famC$, where $\famF=\{R_1,R_2,R_3,R_4,R_5\}$ and $\famC=\{R_5,R_6\}.$}
\end{center}
\end{figure}

\medskip Our general result provides an upper bound on the $k$-CF chromatic number for families that satisfy a complex-looking technical condition, which turns out to be satisfied for numerous classes of string graphs. To present the result, we first introduce some definitions and notations, as well as assumptions that will be part of the technical condition.

For a collection $\famF$ of subsets of $\mathbb{R}^2$  and for a set $v\subset \mathbb{R}^2$, not necessarily in $\famF$, the notation $N_{\famF}(v)$ stands for the neighborhood of $v$ in the intersection graph of the family $\famF\cup \{v\}$.

For a collection $\famC$ of subsets of $\mathbb{R}^2$, the hypergraph $H$ arising from $\famF\times \famC$ has $\famF$ as its vertex set, and  each $\gamma\in\famC$ gives rise to the hyperedge $N_{\famF}(\gamma)$, as exemplified in Figure~6. Note that when $\famC=\famF$, the hypergraph $H$ is simply the neighborhood hypergraph of $\famF$.


We consider families $\famF,~ \famC$ of subsets of $\mathbb{R}^2$ such that each $\gamma \in \famF\cup \famC$ may be represented as a union of $\gamma_1$,$\ldots$, $\gamma_t$, where
\begin{enumerate}
\item Each $\gamma_i$ is either a curve or $\emptyset$,

\item Any two curves $\gamma_i,\gamma_j$ for $i\neq j$ are interior disjoint,

\item For any $\gamma,~\gamma' \in \famF\cup\famC$ such that $\gamma\neq \gamma'$, the curves $\gamma_i$, $\gamma'_i$ are disjoint for each $i$.
\end{enumerate}
In the sequel, we call these conditions \emph{the partition conditions}.

For example, if $\famF,~\famC$ are families of frames in the plane, then we can represent each $\gamma \in \famF$ as $\gamma=\gamma_1 \cup \ldots \cup \gamma_4$, where $\gamma_1$ is the left side, $\gamma_2$ is the bottom side, $\gamma_3$ is the right side, and $\gamma_4$ is the top side, and it is clear that the partition conditions are satisfied (assuming w.l.o.g. the appropriate general position that no two vertical or horizontal sides intersect.)


For families $\famF,~\famC$ as above, in order to obtain upper bounds on the $k$-CF chromatic number of the hypergraph $H$ arising from  $\famF\times \famC$,
 we introduce the concept of an \emph{intersection pattern} and its related definitions. A couple $(\gamma,\gamma')\in \famF\times \famC$, where $\gamma \neq \gamma'$,  is said to give an intersection pattern $(i,j)$ if $\gamma_i$ intersects $\gamma'_j$. A curve $\gamma'\in \famC$ is said to give $l$ intersection patterns with respect to $\famF$ if $|\{(i,j) :  \gamma_i\cap\gamma'_j \neq \emptyset \mbox{ for some } \gamma' \neq \gamma\in\famF\}|=l$.
 A product $\famF\times \famC$ is said to give at most $m$  intersection patterns if
 \[|\{(i,j) :  \gamma_i\cap\gamma'_j \neq \emptyset \mbox{ for some distinct } \gamma\in\famF,\gamma'\in\famC \}|\leq m.
 \]
 For example, if $\famF$ and $\famC$ are families of frames, and if we use the aforementioned partition of each frame $\gamma$ into $\gamma_1, \gamma_2, \gamma_3$, and $\gamma_4$, then
 the possible intersection patterns are $$\{2,3\},\{2,1\},\{4,3\},\{4,1\},\{3,4\},\{3,2\},\{1,4\},\{1,2\},$$ and any pair $(\gamma,\gamma')$ (where $\gamma \neq \gamma'$) with a non-empty intersection gives either 2 or 4 intersection patterns.

 If $|\famF|=n$ and $\famF\times \famC$ gives at most $m$ intersection patterns, then we call $\famF\times \famC$ an \emph{$m$-family of size $n$}.

 Finally, two curves $\gamma,\delta$ are said to be \emph{consecutive} along a curve $\eta$ with respect to some family of curves $\famG$  if there exist $x \in \gamma \cap \eta$  and $y \in \delta \cap \eta$ such that the interior of the subcurve of  $\eta$ connecting $x$ and $y$ does not intersect any curve of $\famG$.

Now, we are ready to state our general result.


\begin{theorem}
\label{tlcs}
Let $\famF\times \famC$ be an $m$-family of size $n$ such that:
\begin{itemize}
\item Each $\gamma'\in \famC$ gives at most $ls$ intersection patterns, and

\item Any couple $(\gamma,\gamma')\in \famF\times \famC $ with $\gamma\cap\gamma'\neq \emptyset$  gives at least $l$ intersection patterns.
\end{itemize}
Then, the hypergraph $H$  arising from $\famF\times \famC$ can be $s$-CF-colored with $O(m\log n)$ colors.
\end{theorem}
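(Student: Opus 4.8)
The plan is to reduce the statement to the construction of a weak coloring and then invoke Lemma~\ref{lem:weakToKCF}. Since we aim for an $s$-CF-coloring, it suffices by that lemma (applied with $k=s$) to exhibit, for $H$ and for each of its induced subhypergraphs, an $(s+1)$-weak coloring using $O(m)$ colors; this will automatically upgrade to an $s$-CF-coloring with $O(m\log n)$ colors. So the real task is to produce an $(s+1)$-weak coloring of $H$ with $O(m)$ colors in a way that is inherited by induced subhypergraphs. To do so I would, for each of the at most $m$ intersection patterns $(i,j)$ realized by $\famF\times\famC$, define an auxiliary graph $G_{(i,j)}$ on the vertex set $\famF$, where $\gamma,\delta\in\famF$ are adjacent iff $\gamma_i$ and $\delta_i$ are consecutive along $\gamma'_j$ for some $\gamma'\in\famC$, with respect to the family $\{\eta_i:\eta\in\famF\}$. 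Setting $G=\bigcup_{(i,j)}G_{(i,j)}$, the coloring of $H$ will simply be a proper vertex-coloring of $G$, so I must verify two things: that a proper coloring of $G$ is an $(s+1)$-weak coloring of $H$, and that $G$ is $O(m)$-colorable.

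For the first point, fix a hyperedge $e_{\gamma'}=N_{\famF}(\gamma')$ with $|e_{\gamma'}|=N\geq s+1$. Every $\gamma\in e_{\gamma'}$ satisfies $\gamma\cap\gamma'\neq\emptyset$ and hence gives at least $l$ intersection patterns with $\gamma'$, so the number of (curve, pattern) incidences is at least $Nl$. Since $\gamma'$ gives at most $ls$ patterns in total, a pigeonhole argument yields a pattern $(i,j)$ realized by at least $Nl/(ls)=N/s>1$, and hence at least two, curves of $e_{\gamma'}$; that is, at least two curves of $\famF$ have their $i$-part meeting $\gamma'_j$. Taking two such curves that are consecutive along $\gamma'_j$ produces an edge of $G_{(i,j)}\subseteq G$ whose endpoints both lie in $e_{\gamma'}$, because every curve whose $i$-part meets $\gamma'_j$ necessarily intersects $\gamma'$ and so belongs to $N_{\famF}(\gamma')$. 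Thus every hyperedge of size $\geq s+1$ contains an edge of $G$ and is therefore non-monochromatic under any proper coloring of $G$.

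For the second point, I would show that each $G_{(i,j)}$ is planar by a direct application of Lemma~\ref{lem:prelim-strings-planar}. By the partition conditions, the curves $\{\gamma_i:\gamma\in\famF\}$ are pairwise disjoint (condition~3), and likewise the curves $\{\gamma'_j:\gamma'\in\famC\}$ are pairwise disjoint. Taking $S_1=\{\gamma_i:\gamma\in\famF\}$ and, for $S_2$, the subsegments of the various $\gamma'_j$ lying between two consecutive crossings (after a small perturbation that keeps them pairwise disjoint), each element of $S_2$ crosses exactly two elements of $S_1$, so the graph furnished by Lemma~\ref{lem:prelim-strings-planar}, which is isomorphic to $G_{(i,j)}$, is planar. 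Hence $G$ is a union of at most $m$ planar graphs and is $6m$-colorable by Claim~\ref{Cl:union-of-planar}, giving the desired $O(m)$ colors. Exactly the same construction applied to $\famF'\times\famC$ for an arbitrary $\famF'\subseteq\famF$ handles all induced subhypergraphs, since restricting to $\famF'$ can only shrink the set of realized patterns and preserves both hypotheses ($\gamma'$ still gives at most $ls$ patterns, and every intersecting couple still gives at least $l$). Lemma~\ref{lem:weakToKCF} then completes the argument.

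I expect the main obstacle to be the bookkeeping in the pigeonhole step: correctly counting pattern incidences so that $|e_{\gamma'}|\geq s+1$ forces two curves sharing a common pattern \emph{along the same} $\gamma'_j$, and confirming that the consecutive pair it produces genuinely lies inside the hyperedge. A secondary, more routine difficulty is the perturbation required to place the segments of $S_2$ into the precise hypotheses of Lemma~\ref{lem:prelim-strings-planar} (pairwise disjoint, each crossing exactly two strings of $S_1$); this is standard but must be done with care so that the isomorphism with $G_{(i,j)}$ is preserved.
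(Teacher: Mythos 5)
Your proposal is correct and follows essentially the same route as the paper: the same auxiliary graphs $G_{(i,j)}$ built from consecutive pairs along the subcurves $\gamma'_j$, the same planarity argument via Lemma~\ref{lem:prelim-strings-planar} and Claim~\ref{Cl:union-of-planar} to get a $6m$-coloring, the same pigeonhole count showing every hyperedge of size $\geq s+1$ contains an edge of $G$, and the same final appeal to Lemma~\ref{lem:weakToKCF}. The paper's proof is organized into the identical sequence of claims, so there is nothing substantive to add.
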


\begin{proof}
For the sake of convenience, we break the proof of the theorem into several claims.

Let $\mathcal{K}\subset \mathcal{F}$. For each intersection pattern $(i,j)$ of $\famF\times \famC$, we define $G_{(i,j)}(\mathcal{K})$ to be the graph whose vertex set is $\{\gamma_i:\gamma \in \mathcal{K}\}$, where there is an edge between two curves $\gamma_i$ and $\tilde{\gamma}_i$ if and only if they are consecutive along the subcurve $\gamma'_j$ of some curve $\gamma'\in\mathcal{C}$, with respect to $\{\delta_i: \delta \in \mathcal{K}\}$.

Note that if $\gamma'=\gamma$ or $\gamma' = \tilde{\gamma}$, then $\gamma_i,\tilde{\gamma}_i$ are not consecutive along $\gamma'_j$, by the definition of the notion `consecutive'. Hence, even if $\famF \cap \famC \neq \emptyset$, we do not have to consider pairs $(\gamma,\gamma')$ where $\gamma'=\gamma$. This justifies the exclusion of such pairs from the definition of intersection patterns above.
\begin{claim}\label{Claim:G_i,j-planar}
Each graph $G_{(i,j)}(\mathcal{K})$ is planar.
\end{claim}

\begin{proof}[Proof of Claim~\ref{Claim:G_i,j-planar}]
We derive the claim from Lemma~\ref{lem:prelim-strings-planar}. To this end, we first set $S_1$ (in the notations of Lemma~\ref{lem:prelim-strings-planar}) to be the collection of curves $\gamma_i$. Then, for each edge $\{\gamma_i,\tilde{\gamma}_i\}$  in $G_{(i,j)}(\mathcal{K})$, we consider the appropriate subcurve of the curve $\gamma'_j$ along which $\gamma_i$ and $\tilde{\gamma}_i$ are consecutive; we then perform on these subcurves a small perturbation and define $S_2$ to be the family of obtained curves. As the curves $\{\gamma'_j\}$ are pairwise disjoint, it is clear that the perturbation can be performed in such a way that $(S_1,S_2)$ satisfy the conditions of Lemma~\ref{lem:prelim-strings-planar}. Therefore, by Lemma~\ref{lem:prelim-strings-planar}, $G_{(i,j)}(\mathcal{K})$ is planar, as asserted.
\end{proof}

Now, let $G(\mathcal{K})$ be the graph whose vertex set is $\mathcal{K}$, where there is an edge between $\gamma,\tilde{\gamma} \in \mathcal{K}$ if some $G_{(i,j)}(\mathcal{K})$ contains the edge $\{\gamma_i, \tilde{\gamma}_i\}$.
\begin{claim}\label{Claim:Proper-Aux}
$G(\mathcal{K})$ is $6m$-colorable.
\end{claim}
\begin{proof}[Proof of Claim~\ref{Claim:Proper-Aux}]
As each graph $G_{(i,j)}(\mathcal{K})$ is planar, and there are at most $m$ such graphs, $G(\mathcal{K})$ is a union of at most $m$ planar graphs.
By Claim~\ref{Cl:union-of-planar}, $G_{(i,j)}(\mathcal{K})$ is $6m$-colorable, as asserted.
\end{proof}


Recall that an $r$-weak coloring of a hypergraph is a coloring of its vertices such that no hyperedge of size $\geq r$ is monochromatic.


\begin{claim}
\label{NMC}
In the notations of Theorem~\ref{tlcs}, let $H'$ be any induced subhypergraph of $H$ arising from $\mathcal{K}\times\famC$, where  $\mathcal{K}\subset \famF$. Then, for any curve  $\gamma'\in\famC$ such that $\gamma'$ gives at most $ls'$ intersection patterns and $ |N_{\mathcal{K}}(\gamma')|\geq s'+1$, the hyperedge $ N_{\mathcal{K}}(\gamma')$  of $H'$ contains an edge of $G(\mathcal{K})$ as a subset. In particular, when $s'=s$,  any proper coloring $c$ of $G(\mathcal{K})$ is an $(s+1)$-weak coloring of $H'$.
\end{claim}

\begin{proof}[Proof of Claim~\ref{NMC}.]
We present the proof for $H$; it will be apparent that the same argument also holds for any induced subhypergraph.

Let $e$ be a hyperedge of $H$ of size $\geq s'+1$ such that $e=N_{\famF}(\gamma')$ for some $\gamma' \in \famC$ that gives at most $ls'$ intersection patterns. Recall that each of the $|N_{\mathcal{F}}(\gamma')| \geq s'+1$ intersecting pairs gives at least $l$ intersection patterns.
By the pigeonhole principle, some intersection pattern $(i,j)$ arises from two different couples $(\gamma,\gamma') $ and $(\delta,\gamma')$, and so both $\gamma_i$ and $\delta_i$ intersect $\gamma'_j$. We can assume w.l.o.g. that $\gamma_i$ and $\delta_i$ are consecutive along $\gamma'_j$ with respect to $\{\eta_i:\eta\in \famF\}$. (Indeed, if there are more than two subcurves in $\famF$ that intersect $\gamma'_j$, we can take two of them that are consecutive along $\gamma'_j$.)
Thus, we have $(\gamma_i,\delta_i) \in  G_{(i,j)}(\famF)$, and hence, $(\gamma , \delta) \in G(\famF)$.
 Finally, as Claim  \ref{Claim:Proper-Aux} holds for any $\mathcal{K} \subset \famF$, the same argument applies for any induced subhypergraph of $H$. This completes the proof of Claim~\ref{NMC}.
\end{proof}

Claim \ref{NMC} implies that when $s'=s$, any proper coloring of $G(\mathcal{K})$ is an $(s+1)$-weak coloring of the hypergraph arising from $\mathcal{K}\times\famC$,  where $\mathcal{K}\subset \famF$. Therefore, by Claim \ref{Claim:Proper-Aux}, any induced subhypergraph of $H$ admits an $(s+1)$-weak coloring with at most $6m$ colors.
  By Lemma~\ref{lem:weakToKCF}, it follows that $\chi_{\text{\textup{s-cf}}}(H) = O(m \log n)$. This completes the proof of Theorem~\ref{tlcs}.
\end{proof}

 We are now ready to prove Theorem~\ref{thm:framesLshapes}. Let us recall its statement.

\mn \textbf{Theorem~\ref{thm:framesLshapes}.}
For a class $\mathcal{F}$ and for $k \in \mathbb{N}$, denote by $\mathcal{F}^k(n)$ the largest $k$-CF chromatic number of an intersection graph of a family of $n$ elements of $\mathcal{F}$. Then:
\begin{enumerate}
\item For the class $\famL$ of L-shapes, $\famL^k(n)= \Theta(\log n)$ for all fixed $k \geq 2$.

\item For the class $\famF$ of frames, $\famF^k(n)= \Theta(\log n)$ for all fixed $k \geq 4$.
\end{enumerate}

\begin{proof}
Each L-shape $\gamma$ can be naturally represented as $\gamma = \gamma_1 \cup \gamma_2$, where $\gamma_1$ is its vertical part and $\gamma_2$ is its horizontal part. Clearly, any family of L-shapes satisfies the aforementioned \emph{partition conditions} with respect to this partition.

For any family $\famL $ of $n$ L-shapes, the pair $\famL\times \famL$ gives at most $2$ intersection patterns, while any $(\gamma,\gamma')\in \famL\times \famL$ such that $\gamma\cap\gamma'\neq \emptyset$ obviously gives at least one intersection pattern. Therefore, by Theorem~\ref{tlcs}, the hypergraph arising from $\famL\times \famL$,  which is simply the neighborhood hypergraph of $\famL$, can be $2$-CF colored with $O(\log n)$ colors. Since a $2$-CF coloring is also a $k$-CF coloring for any $k\geq 2$, the claimed upper bound on $\famL^k(n)$ follows.

Analogously, using the aforementioned representation of each frame $\gamma$ as a union $\gamma_1 \cup \ldots \cup \gamma_4$, for any family $\famF $ of $n$ frames, the pair $\mathcal{F}\times \mathcal{F}$ gives at most $8$ intersection patterns, while any $(\gamma,\gamma')\in \mathcal{F}\times \mathcal{F}$ such that $\gamma\cap\gamma'\neq \emptyset$ gives at least $2$ intersection patterns. Therefore, by Theorem~\ref{tlcs}, the hypergraph arising from $\mathcal{F}\times \mathcal{F}$, which is simply the neighborhood hypergraph of $\famF$, can be $4$-CF colored  with $O(\log n)$ colors. As before, the claimed upper bound for all $k \geq 4$ follows.

Finally, the lower bounds follow from a combination of Claim~\ref{cl:olap_is_frames} and Lemma~\ref{LBint}.
\end{proof}

\begin{remark}
\label{rem:gLs}
As was mentioned at the beginning of Section~\ref{sec:kcf}, $\famF^1(n)= \Theta( \sqrt n)$ and $\famL^1(n)= \Theta( \sqrt n)$. Hence, the problem of $k$-CF coloring of intersection graphs of frames and of L-shapes is now asymptotically solved for any fixed $k$. Furthermore, by Claim~\ref{cl:olap_is_frames} and Lemma~\ref{LBint}, the tight result $\famL^k(n)= \Theta(\log n)$ for all $k \geq 2$, also holds if we replace ``L-shapes'' by ``grounded L-shapes''.
\end{remark}

\subsection{Bounding the CF-chromatic number of string graphs in terms of their chromatic number}
\label{sec:string_bounded_chi}


For general graphs, having a small chromatic number does not imply any bound on the CF-chromatic number. For example, if $G$ is the 1-subdivision of $K_m$ (i.e., a graph $G$ obtained from $K_m$ by adding a vertex in the `middle' of each edge, thus dividing each edge into two edges), then $G$ is bipartite and thus 2-colorable, while $\pn(G) = \Theta(\sqrt{|V(G)|})$.
The above Theorem~\ref{tlcs} allows us to prove Theorem~\ref{thm:intro-string-graphs-UB}, which implies that for string graphs, the situation is significantly different. We prove the following extended version of the theorem.

\begin{proposition}[Extended version of Theorem~\ref{thm:intro-string-graphs-UB}]\label{Prop:Ext-thm-bounding-chromatic}
Let $G=(V,E)$ be a string graph on a set $S$ of $n$ strings such that $\chi(G) \leq t$. Then $\pn(G) = O(t^2 \log n)$. In particular, any string graph $G$ with clique number $\omega$ admits a  CF-coloring with $O(\log^{c \omega} n)$ colors, where $c$ is some constant.
On the other hand, there exists a 2-colorable string graph $G$ such that $\pn(G) = \Omega(\log n)$.
\end{proposition}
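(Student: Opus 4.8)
The plan is to prove the three assertions separately, reusing the auxiliary-graph machinery behind Theorem~\ref{tlcs}. For the upper bound $\pn(G)=O(t^2\log n)$, by Lemma~\ref{lem:weakToKCF} (with $k=1$) it suffices to show that the neighborhood hypergraph of $G$, and each of its induced subhypergraphs, admits a proper coloring with $O(t^2)$ colors. First I would fix a proper $t$-coloring $\phi\colon S\to\{1,\dots,t\}$ of $G$, so that strings sharing a $\phi$-color are pairwise disjoint, and partition each string $\gamma$ of color $a$ trivially as $\gamma_a=\gamma$ and $\gamma_i=\emptyset$ for $i\neq a$. The partition conditions then hold, since same-index pieces come from equally colored, hence disjoint, strings. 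For every ordered pair of distinct colors $(a,b)$ I form the auxiliary graph $G_{(a,b)}$ on the color-$a$ strings, joining two of them whenever they are consecutive along some color-$b$ string, exactly as in Theorem~\ref{tlcs}; each $G_{(a,b)}$ is planar by Claim~\ref{Claim:G_i,j-planar}, an application of Lemma~\ref{lem:prelim-strings-planar}.

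The crucial observation, and the key to obtaining $t^2$ rather than $t^3$ colors, is that every edge of every $G_{(a,b)}$ joins two strings of the \emph{same} $\phi$-color $a$. Hence for each color $a$ the graph $G^a=\bigcup_{b\neq a}G_{(a,b)}$ is a union of at most $t-1$ planar graphs on the color-$a$ strings, and is therefore $6(t-1)$-colorable by Claim~\ref{Cl:union-of-planar}. I would color each class $\phi^{-1}(a)$ with its own palette $P_a$ of $6(t-1)$ colors, taking the $t$ palettes pairwise disjoint; this uses $O(t^2)$ colors and, by construction, refines $\phi$. The resulting coloring $c$ is proper for the neighborhood hypergraph: let $\gamma'$ have color $b$ and $|N(\gamma')|\geq 2$. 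If two neighbors share a color $a$, then two color-$a$ neighbors are consecutive along $\gamma'$ and hence adjacent in $G^a$, so they receive distinct $c$-colors; if all neighbors have distinct $\phi$-colors, they lie in distinct palettes and again receive distinct $c$-colors. Either way $N(\gamma')$ is non-monochromatic, and the same argument applies verbatim to every induced subhypergraph, so Lemma~\ref{lem:weakToKCF} yields $\pn(G)=O(t^2\log n)$. For the clique-number corollary I would then combine this with the coloring bound of Fox and Pach~\cite{FP14}, which gives $\chi(G)\leq(\log n)^{O(\omega)}$ for a string graph with clique number $\omega$; substituting $t=\chi(G)$ into $\pn(G)=O(t^2\log n)$ gives $\pn(G)=(\log n)^{O(\omega)}=O(\log^{c\omega}n)$ for a universal constant $c$.

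For the lower bound I would realize the discrete interval hypergraph as a bipartite string graph. Place $n$ pairwise disjoint vertical segments $p_1,\dots,p_n$ at increasing $x$-coordinates over a common vertical range, and for each of the $\binom{n}{2}+n$ intervals $I=\{i,\dots,j\}$ on $\{1,\dots,n\}$ add a horizontal segment $q_I$ at its own height crossing exactly $p_i,\dots,p_j$. The verticals are mutually disjoint and so are the horizontals, so the intersection graph $G$ is bipartite, hence $2$-colorable, and it is a (segment, hence string) graph. Since each $N(q_I)=\{p_i:i\in I\}$ consists only of verticals, the restriction to $\{p_1,\dots,p_n\}$ of any CF-coloring of $G$ is a CF-coloring of the interval hypergraph $I(n)$; by Proposition~\ref{prop:discrete} it uses at least $\lfloor\log n\rfloor+1$ colors. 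As $G$ has $N=\Theta(n^2)$ vertices, this gives $\pn(G)=\Omega(\log n)=\Omega(\log N)$, as required.

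The main obstacle is the upper bound, and within it the subtle point is achieving $O(t^2)$ rather than $O(t^3)$ colors. A naive approach would take the Cartesian product of the planar auxiliary coloring (which controls large, ``non-rainbow'' neighborhoods) with the original $t$-coloring (which controls rainbow neighborhoods), costing a factor of $t$ too much. The resolution is precisely the observation that all auxiliary edges stay inside $\phi$-color classes, so that a single per-class disjoint-palette coloring simultaneously properly colors the auxiliary graph and refines $\phi$, disposing of both the rainbow and the non-rainbow cases at once. Verifying that this trick survives passage to arbitrary induced subhypergraphs—needed to feed Lemma~\ref{lem:weakToKCF}—is routine, since deleting vertices preserves planarity of the $G_{(a,b)}$ and the within-class structure.
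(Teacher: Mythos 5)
Your proof is correct, and it reaches the same $O(t^2\log n)$ bound through the same underlying machinery (pairwise-disjoint color classes, planar auxiliary graphs via Lemma~\ref{lem:prelim-strings-planar}, Claim~\ref{Cl:union-of-planar}, and Lemma~\ref{lem:weakToKCF}), but it is organized differently from the paper. The paper invokes Theorem~\ref{tlcs} as a black box once for each color class: for every $i$ it CF-colors the hypergraph arising from $\famS_i\times\bigcup_{j\neq i}\famS_j$ with $O(t\log n)$ colors (viewing it as a $(t-1)$-family in which every intersecting pair gives at least one pattern), and then takes $t$ disjoint palettes, so the $\log n$ is paid $t$ times and the final step is a direct verification that the union of the $t$ CF-colorings is a CF-coloring. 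You instead assemble a single proper coloring of the whole neighborhood hypergraph with $O(t^2)$ colors --- using the within-class disjoint-palette trick so that both the ``two neighbors share a $\phi$-color'' case (handled by planarity and consecutiveness) and the ``all neighbors rainbow'' case (handled by palette disjointness) are settled at once --- and then apply Lemma~\ref{lem:weakToKCF} a single time; this is arguably cleaner and makes it transparent why the exponent on $t$ is $2$ rather than $3$. (One small imprecision: when two neighbors of $\gamma'$ share color $a$, the pair that is adjacent in $G^a$ is \emph{some} consecutive pair of color-$a$ neighbors along $\gamma'$, not necessarily the two you started with; since both members of that pair lie in $N(\gamma')$, the hyperedge is still non-monochromatic, so nothing breaks.) For the lower bound you also depart from the paper, which recycles the bipartite circle-graph construction of Lemma~\ref{LBint}; your realization of the discrete interval hypergraph by $n$ vertical and $\Theta(n^2)$ horizontal segments, combined with Proposition~\ref{prop:discrete}, is a valid and somewhat more elementary alternative. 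The Fox--Pach step is identical in both arguments.
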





\medskip \noindent In the proof of the theorem, we use a general coloring result for string graphs with clique number at most $\omega$ obtained by Fox and Pach~\cite{FP14}.
\begin{theorem}[Fox and Pach]
\label{FP}
Let $\famF$ be a family of $n$  curves with clique number at most $\omega$. Then $\famF$ can be properly colored with $\log^{C\omega} n$ colors, for some constant $C>0$.
\end{theorem}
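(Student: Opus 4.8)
The plan is to prove the bound by a double induction, on the clique number $\omega$ and (inside each level) on the number $n$ of curves, driven by two structural facts about string graphs that are by now standard and that I would invoke as black boxes: a \emph{separator theorem} (every string graph with $M$ edges has a balanced vertex separator of size $O(\sqrt{M}\log M)$ whose removal leaves components of size at most $\tfrac{2}{3}$ of the whole) and its density corollary, a \emph{biclique lemma} (there is a constant $c_0$ such that every string graph on $N$ vertices with at least $\varepsilon N^2$ edges contains a complete bipartite subgraph $K_{a,a}$ with $a\ge \varepsilon^{c_0}N$). Both are consequences of the Fox--Pach separator machinery, and reproving them is outside the scope of this argument; the point is to leverage them into the coloring bound.

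The conceptual heart of the induction on $\omega$ is a single observation about how a biclique interacts with the clique number. If $K_{a,a}\subseteq G$ has sides $A$ and $B$, then since every vertex of $A$ is adjacent to every vertex of $B$, a maximum clique of $G[A]$ together with a maximum clique of $G[B]$ forms a clique of $G$; hence $\omega(G[A])+\omega(G[B])\le \omega(G)\le \omega$. As both sides are nonempty, the side with the smaller clique number --- say $A$ --- satisfies $\omega(G[A])\le \omega-1$. This is the mechanism that lets each level of the induction spend one unit of $\omega$: the induced subgraph $G[A]$ is again a string graph (the class is hereditary) but with strictly smaller clique number, so it can be colored using the inductive hypothesis for $\omega-1$.

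Writing $h_\omega(N)$ for the maximum chromatic number of a $K_{\omega+1}$-free string graph on at most $N$ vertices, I would fix a density threshold and split into two regimes. In the \emph{dense} regime, where $G$ has at least $N^2/(\log N)^{A}$ edges for a suitable constant $A$, the biclique lemma yields a side $A$ of size $\Omega\!\left(N/(\log N)^{O(1)}\right)$ with $\omega(G[A])\le \omega-1$; I color $G[A]$ with a fresh palette via the bound $(\log N)^{C(\omega-1)}$, delete it, and recurse on the remainder, which has lost a $1/\mathrm{polylog}(N)$ fraction of its vertices. Iterating these peels drives the vertex count down while every peel costs only an $(\omega-1)$-palette, so the cumulative cost of decrementing $\omega$ by one is a $\mathrm{polylog}(N)$ factor. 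In the \emph{sparse} regime the separator has size $O\!\left(N/(\log N)^{A/2-1}\right)$, and I apply the classical separator recursion for chromatic number: group the components of $G-S$ into two edge-disjoint halves $X,Y$ of size at most $\tfrac{2}{3}N$, color $X$ and $Y$ with a common palette (there are no $X$--$Y$ edges) and the separator $S$ with a disjoint palette, all recursively. This gives $h_\omega(N)\le h_\omega(\tfrac{2}{3}N)+h_\omega(N/\sigma)$ with $\sigma=\mathrm{polylog}(N)$, contributing only a further polylogarithmic overhead and leaving $\omega$ unchanged. Combining the regimes, each decrement of $\omega$ multiplies the number of colors by at most $\mathrm{polylog}(N)$, and with the base case $\omega=1$ (an edgeless graph, one color) the bound $\log^{C\omega}n$ follows after absorbing constants into $C$.

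The step I expect to be the main obstacle is the quantitative bookkeeping across these density regimes, and especially the intermediate densities where neither tool is at full strength: the biclique the lemma guarantees is only of near-linear (polylog-deficient) size, so a single peel removes merely a $1/\mathrm{polylog}$ fraction, and one must argue that the accumulated overhead of the many peels needed to shrink $N$ by a constant factor --- interleaved with invocations of the separator recursion as the graph thins out --- still amounts to only a $\mathrm{polylog}(N)$ penalty per level of $\omega$. Getting the exponents of these polylog factors to telescope into a clean $\log^{C\omega}n$, rather than degrading with the depth of the $n$-recursion, is where the genuine care lies; by contrast, the separator and biclique inputs are used entirely as black boxes.
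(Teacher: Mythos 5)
First, a point of order: the paper does not prove this statement at all --- it is quoted verbatim from Fox and Pach \cite{FP14} and used purely as a black box in the proof of Proposition~\ref{Prop:Ext-thm-bounding-chromatic}. So your proposal is an attempted reconstruction of the cited theorem from the same authors' separator machinery, not something that can be compared with an internal argument. Judged on its own merits, the skeleton is the right one: the observation that $\omega(G[A])+\omega(G[B])\le\omega(G)$ for the two sides of a biclique is correct, and it is indeed the engine of the Fox--Pach induction (in fact it gives the smaller side clique number at most $\lfloor\omega/2\rfloor$, which is how one obtains their actual exponent $O(\log\omega)$; your weaker $\omega-1$ decrement would suffice for the $\log^{C\omega}n$ form stated here, \emph{provided} the per-level overhead were genuinely polylogarithmic). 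Two black-box issues first: the biclique lemma as you state it, with sides of size $\varepsilon^{c_0}N$, is stronger than what the Fox--Pach machinery delivers for string graphs --- the available bound (via the $K_{t,t}$-free edge estimate) gives sides of size $\Omega\bigl(\varepsilon^{c}N/\log N\bigr)$ --- and although you implicitly downgrade to the polylog-deficient version later, the lemma as literally invoked is not available.

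The genuine gap is exactly the step you defer as ``bookkeeping'': the sparse-regime recursion $h_\omega(N)\le h_\omega(2N/3)+h_\omega(N/\sigma)$ with $\sigma=\mathrm{polylog}(N)$ does \emph{not} close with a polylogarithmic ansatz, so the claim that the separator recursion contributes ``only a further polylogarithmic overhead'' is false for the scheme as described. Concretely, setting $h(N)=(\log N)^{D}$, one has $h(2N/3)=(\log N)^{D}-\Theta\bigl((\log N)^{D-1}\bigr)$ while $h(N/\sigma)=(\log N)^{D}-O\bigl((\log N)^{D-1}\log\log N\bigr)$, so the right-hand side is roughly $2(\log N)^{D}$, far exceeding $(\log N)^{D}$; the inductive step fails for every fixed $D$. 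The structural reason is that the separator branch recurses at the \emph{same} clique number on a vertex set that is only polylogarithmically smaller, so the recursion tree branches essentially fully at every scale; unrolling the recurrence gives solutions of order $\exp\bigl(\Theta(\log N\cdot\log\log\log N/\log\log N)\bigr)$, which is $N^{o(1)}$ but super-polylogarithmic, and interleaving with the dense-regime peels does not repair this, since each peeling phase still terminates in a sparse instance that spawns a second full-strength subproblem. Making the separator pay for itself --- rather than recursing on it at unchanged $\omega$ with a disjoint palette --- is the actual crux of the Fox--Pach argument, and your proposal names this difficulty without supplying the mechanism that resolves it; as written, the argument proves at best a quasi-polylogarithmic bound per level of $\omega$, not $\log^{C\omega}n$.
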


\begin{proof}[Proof of Theorem~\ref{thm:intro-string-graphs-UB}]
By the assumption on $\chi(G)$, there exists a partition of $\famS$ into $t$ subsets $\famS_1, \ldots, \famS_t$, such that the curves inside each $\famS_i$ are pairwise disjoint. To apply Theorem~\ref{tlcs}, we represent each $s \in \famS$ as $s=s_1 \cup s_2 \cup \ldots \cup s_t$, where for each $s \in \famS_i$, we set $s_i=s$ and $s_j=\emptyset$ for all $j\neq i$. It is clear that with respect to this representation, the family $\famS$ satisfies the aforementioned \emph{partition conditions}. Furthermore, for each hypergraph $H_i$ arising from $\famS_i\times\bigcup_{j\neq i} \famS_j$, any $s\in \bigcup_{j\neq i} \famS_j$ gives at most one intersection pattern, while $\famS_i\times\bigcup_{j\neq i} \famS_j$ gives at most $t$-$1$ intersection patterns and is thus  a $(t-1)$-family. Therefore, by Theorem~\ref{tlcs}, each $H_i$ can be CF-colored with $O(t\log n)$ colors. Using a different palette of $O(t\log n)$ colors for each $H_i$, we can color the entire $\famS$ using $O(t^2\log n)$ colors.

To see that the coloring we obtain is a CF-coloring of $\famS$, take any $s \in \famS$ with a non-empty neighborhood and any of its neighbors $s'$. For some $i \neq j $, we have $s \in \famS_j$ and  $s' \in \famS_i$. Hence, the neighborhood of $s$ restricted to $\famS_i$ is a non-empty hyperedge of $H_i$, and thus, some color appears in that neighborhood exactly once. Since that color does not appear in the coloring of $H_l$ for $l \neq i$, our coloring is indeed a CF-coloring of $\famS$.

The second part of the theorem follows immediately by applying Theorem \ref{FP}. We simply substitute $\log^{C\omega} n$ in place of $t$, where $C$ is the constant of Theorem \ref{FP}, yielding the upper bound $\log^{2C\omega} n\log n=\log^{2C\omega+1} n\leq\log^{\tilde{C}\omega} n$ for  $\tilde{C}=2C+1$ on the CF-chromatic number of string graphs with clique number at most $\omega$.

The tightness direction follows from the proof of the lower bound of Theorem~\ref{CFint} presented in Section~\ref{sec:circle_graphs}. (In that proof, we present a circle graph whose $k$-CF chromatic number is $\Omega(\log n)$ for any fixed $k$; it is clear from the construction that the graph is bipartite, and so 2-colorable.)
\end{proof}

Finally, for many specific families of string graphs the bound $\log^{\tilde{C}\omega} n$ can be further improved. For example, for families of  $x$-monotone curves (i.e., curves that are intersected by any vertical line
at most once), the following corollary holds:
\begin{corollary}
Let $\famF$ be a family of $n$  $x$-monotone curves with a bounded clique number. Then $\pn(\famF)= O(\log^3 n)$.
\end{corollary}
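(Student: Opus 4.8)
The plan is to reduce the statement to a bound on the ordinary chromatic number and then invoke Proposition~\ref{Prop:Ext-thm-bounding-chromatic}. Concretely, I would first prove that any family $\famF$ of $n$ $x$-monotone curves whose clique number is bounded by a constant $\omega$ satisfies $\chi(\famF)=O(\log n)$; feeding $t=O(\log n)$ into the bound $\pn(\famF)=O(t^2\log n)$ of Proposition~\ref{Prop:Ext-thm-bounding-chromatic} then yields $\pn(\famF)=O(\log^2 n\cdot \log n)=O(\log^3 n)$, as required. Thus the whole problem is to establish the logarithmic chromatic bound, which is where $x$-monotonicity is used: the general string-graph bound of Theorem~\ref{FP} would only give $\chi=\mathrm{polylog}(n)$ and hence a larger power of $\log n$.

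To bound $\chi$, I would run a divide-and-conquer argument along the $x$-axis. Let $x_0$ be the median of the left endpoints (smallest $x$-coordinates) of the curves, and let $\ell$ be the vertical line $x=x_0$. This splits $\famF$ into the curves $\mathcal{L}$ lying strictly to the left of $\ell$, the curves $\mathcal{R}$ lying strictly to the right, and the curves $\mathcal{M}$ crossing $\ell$. Writing $[a_i,b_i]$ for the $x$-projection of the $i$-th curve, the choice of $x_0$ gives $|\{a_i>x_0\}|\le n/2$, and since $b_i\ge a_i$ forces $\{b_i<x_0\}\subseteq\{a_i<x_0\}$ we also get $|\{b_i<x_0\}|\le n/2$; hence both $|\mathcal{L}|$ and $|\mathcal{R}|$ are at most $n/2$. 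Curves of $\mathcal{L}$ and $\mathcal{R}$ never intersect, so in the recursion the two sides may be colored with a common palette; the heart of the induction step is therefore to color the separator $\mathcal{M}$ with a number of colors depending only on $\omega$.

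The key claim is that curves crossing a common vertical line, with clique number at most $\omega$, have chromatic number bounded by a function of $\omega$ alone. For this I would split each curve $c\in\mathcal{M}$ at its unique intersection with $\ell$ into a left arc $c^L$, contained in the closed left half-plane and grounded on $\ell$, and a right arc $c^R$, contained in the closed right half-plane and grounded on $\ell$. Two curves of $\mathcal{M}$ intersect if and only if their left arcs intersect or their right arcs intersect, so the intersection graph of $\mathcal{M}$ is the union of the intersection graph of $\{c^L\}$ and that of $\{c^R\}$. Each of these is an \emph{outerstring graph} (strings contained in a half-plane, each with one endpoint on its bounding line), and each has clique number at most $\omega$, since a pairwise-intersecting set of arcs lifts to a clique of the full curves in $\famF$. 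By the $\chi$-boundedness of outerstring graphs (Rok and Walczak), each is properly colorable with $f(\omega)=O_\omega(1)$ colors, and taking the product coloring colors $\mathcal{M}$ with $f(\omega)^2=O_\omega(1)$ colors. Hence $\chi(n)\le \chi(n/2)+O_\omega(1)$, which unrolls to $\chi(\famF)=O_\omega(\log n)=O(\log n)$ for bounded $\omega$.

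The main obstacle is precisely this key claim on the separator: recognizing the two families of grounded arcs as outerstring graphs and importing their $\chi$-boundedness. Once that is in place, the remaining pieces — the median-line choice guaranteeing a balanced split, the reuse of a single palette on the non-adjacent families $\mathcal{L}$ and $\mathcal{R}$ at every level of the recursion, and the final substitution $t=O(\log n)$ into Proposition~\ref{Prop:Ext-thm-bounding-chromatic} — are routine. The one point that should be checked carefully is that the palette used on $\mathcal{M}$ at a given level is disjoint from the palette reused across the two sides, so that the colorings glue into a proper coloring of $\famF$; this is legitimate exactly because no left curve ever meets a right curve.
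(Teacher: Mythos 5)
Your proposal is correct and follows the same route as the paper: establish $\chi(\famF)=O(\log n)$ and feed $t=O(\log n)$ into the bound $\pn(\famF)=O(t^2\log n)$ of Proposition~\ref{Prop:Ext-thm-bounding-chromatic}. The only difference is that the paper simply cites~\cite{RW14} for the logarithmic chromatic bound, whereas you re-derive it via a (correct) median-line divide-and-conquer whose separator step rests on the $\chi$-boundedness of outerstring graphs --- which is itself the content of that same reference.
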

\begin{proof}
It was proved in~\cite{RW14} that we have $\chi(\famF)=O(\log n)$. Hence, by Theorem~\ref{thm:intro-string-graphs-UB}, $\pn(\famF)= O(\log^3 n)$.
\end{proof}

\section{Discussion}
\label{sec:discussion}
CF-coloring problems arising from string graphs are similar, in some sense, to the well-studied problem of $\chi$-boundedness in the context of string graphs.

The class of string graphs is not $\chi$-bounded, due to the high freedom one has to place the curves in the plane. In fact, even the class of intersection graphs of segments is not $\chi$-bounded (see~\cite{PKKLMTW14}). Similarly, as we see in this paper, many specific classes of string graphs do not admit any better upper bound on their CF and $k$-CF chromatic numbers than the one that already holds in the non-geometric context.

For more restricted classes of string graphs, various results regarding $\chi$-boundness have been obtained along the years. For example, a double exponential upper bound on the largest chromatic number of any interval overlap graph with a fixed clique number was proved by Gy\'arf\'as in~\cite{Gy85} (see also~\cite{Ko88}). In 1996, McGuinness~\cite{MG96} showed that the intersection graph of grounded L-shapes is also $\chi$-bounded.
Similarly, in the context of CF-coloring, Theorem~\ref{CFint} implies that any interval overlap graph (=circle graph) has at most a logarithmic CF-chromatic number, and Theorem~\ref{thm:L-shapes} bounds the CF-chromatic number of the intersection graph of grounded L-shapes.

Gy\'arf\'as' aforementioned result establishes an interesting framework further developed in  \cite{LW14,MG96,RW14,SU14}. The culmination of this line of research is~\cite{RW17}, where it is shown that the class of intersection graphs arising from curves intersecting a fixed curve at least once and at most a constant number of times is $\chi$-bounded.

In light of the results on $\chi$-boundedness and of our results on interval overlap graphs and on grounded L-shapes, it is natural to ask whether families of curves intersecting a fixed line a bounded number of times are also special in the CF or the $k$-CF-coloring sense. More precisely, it may be asked whether it is true that such families of $n$ curves can be CF-colored with polylog($n$) colors, or at least $k$-CF colored with polylog($n$) colors for a sufficiently large constant $k$, like in the cases of L-shapes and frames.

Unfortunately, this is not the case. While the $\chi$-boundness of interval filament graphs was proved in~\cite{KW14}, Proposition~\ref{IntFil} demonstrates that such graphs do not admit any upper bound on their  $k$-CF chromatic number that is significantly better than the general upper bound.

While the ``$k$-CF" variant of the result of \cite{RW17} does not hold, other possible generalizations may be considered. In particular, note that in any representation of interval filament graphs leading to Proposition \ref{IntFil}, the maximum number of intersections between two curves in the family is clearly $\Omega(n^{\frac{k}{k+1}})$. It seems that producing string graphs with a large CF-chromatic number requires extensive of freedom to place the curves or many intersections between some pairs of curves. It may be the case that bounding both the maximum number of intersections between a pair of curves and the maximum number of intersections of each curve with a fixed line (but imposing, of course, at least one intersection) is the right condition for obtaining some ``interesting'' upper bound on the CF-chromatic number of the corresponding intersection graph.

\bibliographystyle{plain}
\bibliography{references}

\end{document}